\documentclass[12pt]{amsart}

\allowdisplaybreaks

\usepackage{amsmath,amssymb,verbatim,graphicx,amsthm,color,url, enumerate, enumitem}
\usepackage[top=0.5in, bottom=0.5in, left=1.2in, right=1.2in]{geometry}
\usepackage{cite}
\raggedbottom
\usepackage{buczospectra}
\usepackage[colorlinks=true]{hyperref} 
\usepackage{ifthen}
\usepackage{tikz}
\usetikzlibrary{decorations.pathreplacing}
\usepackage{tikz-cd}
\numberwithin{equation}{section}
\usepackage{color}
\usepackage{fancyhdr}
\usepackage{datetime}
\fancyhf{}
\fancyhead[R]{\hfill Generic Birkhoff Spectra \hfill \thepage}
\pagestyle{fancy}



\allowdisplaybreaks[1]
\usepackage[sc]{mathpazo}
\linespread{1.2}         
\usepackage[T1]{fontenc}
\usepackage{blindtext}
\usepackage{scrextend}
\addtokomafont{labelinglabel}{\sffamily}

\newcommand{\vertiii}[1]{{\left\vert\kern-0.25ex\left\vert\kern-0.25ex\left\vert #1
    \right\vert\kern-0.25ex\right\vert\kern-0.25ex\right\vert}}
\newcommand{\braciii}[1]{{\left[\kern-0.25ex\left[\kern-0.25ex\left[ #1
\right]\kern-0.25ex\right]\kern-0.25ex\right]}}
\newcommand{\norm}[1]{\left\lVert#1\right\rVert}

\newcommand{\RR}{\mathbb{R}}

\newcommand{\NN}{\mathbb{N}}

\newcommand{\ind}{\mathbf{1}}

\newcommand{\diam}{\text{diam}}

\newcommand{\PCC}{\mathrm{PCC}}

\theoremstyle{plain}

\newtheorem{theorem}{Theorem}[subsection]
\newtheorem*{theorem*}{Theorem}

\newtheorem{lemma}[theorem]{Lemma}

\newtheorem{claim}[theorem]{Claim}

\theoremstyle{definition}

\newtheorem{remark}[theorem]{Remark}
\newtheorem{example}[theorem]{Example}

\title{Generic Birkhoff Spectra}

\author{Zolt\'an Buczolich}
\address{Department of Analysis, ELTE E\"otv\"os Lor\'and\\
University, P\'azm\'any P\'eter S\'et\'any 1/c, 1117 Budapest, Hungary\\ORCID Id: 0000-0001-5481-8797}
\email{buczo@caesar.elte.hu}
\urladdr{http://buczo.web.elte.hu}

\author{Bal\'azs Maga}
\address{Department of Analysis, ELTE E\"otv\"os Lor\'and\\
University, P\'azm\'any P\'eter S\'et\'any 1/c, 1117 Budapest, Hungary}
\email{magab@caesar.elte.hu}
\urladdr{www.magab.web.elte.hu}

\author{Ryo Moore}
\address{Fatulcad de Matem\'aticas, Pontificia Universidad Cat\'olica de Chile, Santiago, Chile}
\email{rymoore@mat.uc.cl}
\urladdr{https://sites.google.com/view/ryomoore/}

\thanks{ZB was supported by the Hungarian National Research, Development and 
Innovation Office--NKFIH, Grant  124003.
}
\thanks{BM was supported by the \'UNKP-18-2 New 
National Excellence of the Hungarian Ministry of Human Capacities, and 
by the Hungarian National Research, Development and Innovation 
Office--NKFIH, Grant 124749.}
\thanks{RM was partially supported by CONICYT-FONDECYT Postdoctorado 3170279. 
\newline\indent {\it Mathematics Subject
Classification:} Primary : 37A30, Secondary : 28A80, 37B10, 37C45.
\newline\indent {\it Keywords:} Birkhoff spectrum, multifractal analysis, Hausdorff dimension, generic/typical continuous functions, symbolic dynamics.}

\begin{document}
\maketitle

\begin{abstract}
Suppose that $\Omega = \{0, 1\}^\NN$ and  $\sss$ is the one-sided shift.
The Birkhoff spectrum 
$\ds S_{f}(\aaa)=\dim_{H}\Big \{ \ooo\in\OOO:\lim_{N \to \infty} \frac{1}{N} \sum_{n=1}^N f(\sigma^n \omega) = \alpha \Big  \},$
where $\dim_{H}$ is the Hausdorff dimension.
It is well-known that the support of $S_{f}(\aaa)$  is a bounded and closed interval
 $L_f = [\alpha_{f, \min}^*, \alpha_{f, \max}^*]$ 
 and $S_{f}(\aaa)$ on $L_{f}$ is concave and upper semicontinuous.
We are interested in possible shapes/properties of the spectrum, especially
for generic/typical $f\in C(\OOO)$ in the sense of Baire category. 
For a dense set in $C(\OOO)$ the spectrum is not continuous on $\RR$, though
for the generic $f\in C(\OOO)$ the spectrum is continuous on $\RR$, but has infinite one-sided derivatives at the endpoints of
$L_{f}$. We give an example of a function which has continuous $S_{f}$ on $\RR$, but with finite one-sided derivatives at the endpoints of $L_{f}$. The spectrum of this function
can be as close as possible to a "minimal spectrum".
We use that if  two functions $f$ and $g$ are close in  $C(\OOO)$ then $S_{f}$ and $S_{g}$ are close  on $L_{f}$ apart from neighborhoods of the endpoints.
\end{abstract}

\tableofcontents

\section{Introduction}

\subsection{Background}
Let $(X, \mathcal{F}, \mu, T)$ be a measure-preserving system. 
Birkhoff's Ergodic Theorem tells us that for $\mu$-a.e. $x \in X$ and $f \in L^1(\mu)$, the  limit\\ $\lim_{N \to \infty} \frac{1}{N} \sum_{n=1}^N f(T^nx)$ exists, and is a $T$-invariant function. 
Furthermore, if $T$ is ergodic with respect to $\mu$, the limit equals the constant $\int f \, d\mu$. 
For the ergodic case, if we let $E_f(\alpha):= \{x \in X: \lim_{N \to \infty} \frac{1}{N} \sum_{n=1}^N f(T^nx) = \alpha\}$ then $\mu(E_f(\alpha)) = 1$ if $\alpha = \int f \, d\mu$, and $0$ otherwise.

Now consider $(X, T)$ to be a topological dynamical system, and $f$ be a continuous function on $X$. 
Instead measuring the level-set $E_f(\alpha)$ by the ergodic measure $\mu$, one gets more interesting values by considering the Hausdorff dimension of the sets $E_{f}(\aaa)$ 
(including the irregular set $E_f' := \{x \in X: \lim_{N \to \infty} \frac{1}{N} \sum_{n=1}^N f(T^nx) $ does not exist.$ \}$).  For a given measure $\mu$ the Birkhoff Ergodic Theorem selects just one $\aaa$ and gives zero $\mmm$ measure to the other
sets $E_{f}(\aaa')$ for $\aaa'\neq\aaa$.
The function $S_f(\alpha) := \dim_H(E_f(\alpha))$ is called the \textit{Birkhoff spectrum} for $f$, and it will be the primary object that we study in this paper.

Such kind of study is referred to as a multifractal analysis. Multifractal analysis on Birkhoff averages has been initiated by Y. Pesin and H. Weiss \cite{PW01} for H\"older functions in the context of thermodynamic formalism. 
Birkhoff spectrum of continuous functions was studied by A.-H. Fan, D.-J. Feng, and J. Wu \cite{Fan_Feng_Wu_Recurrence}. In their study (which we will recall precisely in Theorem $\ref{theoremFFW}$), they have shown a variational formula between the dimension of the level set and the metric entropy.
They have also shown that $S_f(\alpha)$ is concave and upper semicontinuous (hence continuous by the nature of concave functions; see \cite[\S10]{Rockafellar}) on the interior of the set $\{\alpha \in \RR^d: E_f(\alpha) \neq \emptyset \}$, while remaining the question regarding the behavior of the spectrum at the boundary 
of its support  open. 

For other studies of the Birkhoff spectrum, we refer to, for instance, \cite{BS01}, \cite{TV03}, \cite{C10}, \cite{FLW02}, \cite{JJOP10}, \cite{O03}, and \cite{IJT17}. For more information on multifractal analysis (especially with its relationship to thermodynamic formalism), we refer to 
\cite{CaMa}, \cite{Rand} and to
a survey paper of V. Climenhaga \cite{C14}.

The main objective of this paper is to better understand the Birkhoff spectrum for generic continuous functions. 
We recall that given a complete metric space $(X, d)$, we say a set $A \subset X$ is \textit{generic} (or \textit{typical}) if $A$ is a complement of a set of first category (i.e. a countable union of nowhere dense sets). The Baire category theorem asserts that a generic set $A$ is dense in $X$. 
In our paper we will work with the full shift  $(\Omega, \sigma)$ on the alphabet $\{ 0,1 \}$ and consider Birkhoff averages of real-valued continuous functions $f \in C(\Omega, \RR)=C(\OOO)$. 
One of the main foci of this paper will be on the behavior of the spectrum of a generic continuous function at the boundary of the support of the spectrum. 
In case of one-dimensional range the support of the  spectrum of $f\in C(\OOO)$ is always a (possibly degenerate) closed interval $L_{f}$ 
and concave and upper semiconinuous functions are always continuous on
such intervals. However, it may happen that $S_{f}$, as a function defined on $\RR$ has a jump discontinuity at the endpoints of $S_{f}$. 
Such functions were called degenerate by J. Schmeling in \cite{Schmeling}.
We will show that for the generic $f\in C(\OOO)$ the spectrum is continuous, with infinite one-sided derivatives at the endpoints of
$L_{f}$.  Continuity of the spectrum for the generic H\"older function was proved by Schmeling in  \cite{Schmeling}. In fact, this combined with results in 
\cite{Morris} and \cite{Fan_Feng_Wu_Recurrence}
imply the continuity of the spectrum for the generic continuous function in our setting.
In this paper we give a direct proof of this fact.

\subsection{Summary of the main results, organization of the paper}
Let $\Omega = \{0, 1\}^\NN$, and $\sigma$ be the shift map. 
We assume that $(\Omega, \sigma)$ is the full shift. 
The space of real-valued continuous functions on $\Omega$ (denoted $C(\Omega)$) is equipped with the usual supremum norm. 
We denote by $\alpha_{f, \max}$ (resp. $\alpha_{f, \min}$)  the maximum (resp. minimum) value of $f\in C(\OOO)$. 
The level-sets  of the Birkhoff averages are 
\begin{equation}\label{*defefa}
E_f(\alpha) :=\Big  \{\omega \in \Omega: \lim_{N \to \infty} \frac{1}{N} \sum_{n=1}^N f(\sigma^n \omega) = \alpha \Big \}.
\end{equation}
Let $\alpha_{f, \max}^* := \sup\{\alpha \in \RR : E_f(\alpha) \neq \emptyset \}$, and $\alpha_{f, \min}^*:= \inf\{\alpha \in \RR : E_f(\alpha) \neq \emptyset \}$. 
We also put $L_f = [\alpha_{f, \min}^*, \alpha_{f, \max}^*]$.
The Birkhoff spectrum is defined as $
S_f(\alpha) := 
\dim_H E_f(\alpha)
$, keeping in mind that  the empty set has Hausdorff dimension zero $S_{f}$ is defined on $\RR$. 
Results on concavity of $S_{f}$ on $L_{f}$ and Birkhoff's Ergodic Theorem imply  that $L_{f}$  is the support 
of $S_f$. It is known, \cite{Fan_Feng_Wu_Recurrence}, that $S_{f}$ is apart from being concave on the closed interval $L_f$ is also upper semicontinuous and hence it is continuous on the same closed interval. 
Often, for ease of terminology, we will mention the endpoints of the support of the spectrum as the endpoints of the spectrum.

In Section \ref{*secpreli} after introducing some notation we give some simple examples and recall one of the main results of \cite{Fan_Feng_Wu_Recurrence}.  

Next we discuss some tools used later.
First, we show that given a continuous function $f$, any continuous function that is sufficiently close to $f$ would have its Birkhoff spectrum also close to $S_f$ on $L_f$ except for a neighborhood of the endpoints of the spectrum. 
This will be proven in Theorem $\ref{Thm:NormCont}$.

In Subsection \ref{*secpccc} we prove some results about piecewise constant continuous (or simply $\PCC$)
functions, that is about functions which depend on finitely many coordinates. Among other
results we show that for such functions $f$ there is always a periodic $\ooo$ in $E_{f}(\aaa_{f,\max}^{*})$.

The next two results will concern the continuity of a Birkhoff spectrum. 
Given $f \in C(\Omega)$, we say that the spectrum $S_f$ is \textit{continuous} if it is continuous on $\RR$, and \textit{discontinuous} otherwise. 
Equivalently, $S_f$ is continuous when $S_f(\alpha_{f, \min}^*) = S_f(\alpha_{f, \max}^*) = 0$. 
We will first show that continuous, in fact $\PCC$ functions with discontinuous spectrum are dense in $C(\Omega)$ (Theorem $\ref{*ppropdisc}$).
 On the other hand, we give a direct proof of the fact that generic continuous functions have continuous spectrum (Theorem $\ref{contspectrum}$). 
In  \cite[\S5, Item (2)]{Fan_Feng_Wu_Recurrence} a question was raised about
continuity of the spectrum at the boundary of its support. In the one-dimensional case, as we mentioned
the answer is obvious if we consider the restriction of $S_{f}$ onto $L_{f}$, however
there might be discontinuity from the exterior side of $L_{f}$.

In Subsection \ref{SS-gensp} we show that for a dense open subset of $C(\OOO)$ the support of the  spectrum is in the interior of $[\aaa_{f,\min},\aaa_{f,\max}]$.

It is mentioned in the introduction of \cite{Fan_Feng_Wu_Recurrence} that
even for H\"older regular functions discussions of $S_{f}(\aaa)$  for boundary points 
of $L_{f}$ are scarce, which is actually a  subtle problem.

In the remainder of our paper, in Section \ref{Sonesderiv} we will discuss one-sided derivatives of a Birkhoff spectrum at the endpoints/boundary points of the spectrum. 
Given $\varphi: \RR \to \RR$, we denote by $\partial^{-}\varphi(\alpha)$ the left-hand derivative of $\varphi$ at $\alpha$ (if the value exists). Similarly, $\partial^{+}\varphi(\alpha)$ denotes the right-hand derivative. 
We will show that the spectrum of a generic continuous function $f$ has infinite one-sided derivatives at the endpoints of $L_f$, i.e. $\partial^+ f(\alpha_{f, \min}^*) = \infty$, and $\partial^- f(\alpha_{f, \max}^*) = -\infty$ (Theorem $\ref{typicalderivative}$). We construct a continuous function with continuous spectrum for which the one-sided derivatives at the endpoints are finite (Theorem $\ref{finitederivatives}$).
This function will also have a very small spectrum. By concavity of the spectrum on its support there is always a triangle which should be under the graph of the spectrum.
Our example will provide an example when the spectrum is very close to this lower estimate. 
In \cite{TV03} Takens and Verbitsky calculated the spectrum of the Manneville-Pomeau map.
It has a Birkhoff spectrum with a finite one-sided derivative at one of the endpoints.

It is not that obvious that functions with finite one-sided derivatives at the endpoints of the spectrum exist since for some well-known examples of functions with continuous spectrum, like the one discussed in Example \ref{examplecontspect} we have $\partial^+ f(\alpha_{f, \min}^*) = \infty$, and $\partial^- f(\alpha_{f, \max}^*) = -\infty$, however this function does not have a ``generic spectrum"
since $\alpha_{f, \min}^*$ equals  $\alpha_{f, \min}$ and $\alpha_{f, \max}^{*}$
equals $\alpha_{f, \max}$. As we mentioned earlier for the generic continuous  functions we always have $\alpha_{f, \min}<\alpha_{f, \min}^*<\alpha_{f, \max}^*<\alpha_{f, \max}$ see Theorem
\ref{*typsup}. In Theorem \ref{pccinfiniteder} we prove that for $\PCC$ functions $f$
with continuous spectrum we always have $\partial^+ f(\alpha_{f, \min}^*) = \infty$, and $\partial^- f(\alpha_{f, \max}^*) = -\infty$. This illustrates that for the proof of
Theorem $\ref{finitederivatives}$ one needs to use a more involved construction
than a $\PCC$ function.


\section{Preliminaries}\label{*secpreli}
\subsection{Notation and terminology}
Let $\Omega = \{0, 1\}^\NN$, and $\sigma$ be the shift map. 

We introduce the usual metric $d$ on $\Omega$ defined by
\[d(\omega,\omega')=\sum_{k=1}^{\infty}\frac{|\omega_k-\omega'_k|}{2^k},\]
where $\omega_k$ (resp. $\omega_k'$) denotes the coordinates/entries of $\omega$ (resp. $\omega'$).
If $k\in\NN\cup \{\oo\}$ and $A$ is a finite string of $0$s and $1$s then $A^{k}$ denotes the $k$-fold concatenation of $A$ and $[A]$ denotes the cylinder set $\ds \{ \ooo: A\ooo' ,\  \ooo'\in \OOO \}$.
Given $k,l\in\NN$ and $\ooo=(\ooo_{1}\ooo_{2}...)\in\OOO$ we put $\ooo|k=\ooo_{1}...\ooo_{k}$ and  $(\omega)_k^l := \omega_k\omega_{k+1}\ldots\omega_{l-1}\omega_l$,
if $k\leq 0$ then $\ooo|k$ is the empty string and analogously if $k>l$ then 
$(\omega)_k^l$ is also the empty string. The "conjugate" $\overline{\ooo}$ is the string which we obtain from $\ooo$ by swapping $0$s and $1$s, that is $\overline{\ooo}_{k}=1-\ooo_{k}$ for all $k$.

The $s$-dimensional Hausdorff measure of $A\sse \OOO$ is denoted by $\cah^{s}(A)$
and recall that $\cah^{s}(A)=\lim_{\ddd\to 0+}\cah^{s}_{\ddd}(A)$ where
$\cah^{s}_{\ddd}(A)=\inf \{\sum_{i}(\diam\ U_{i})^{s} :$ where $A\sse \cup_{i}U_{i}$ and $\diam\ U_{i}<\ddd \}.$ The Hausdorff dimension of $A\sse \OOO$ is $\dim_H A=\inf\{s: \cah^{s}(A)=0\}$. From this definition, it is a standard exercise to show that $\dim_H \Omega = 1$. 

The complement of a set $A$ is denoted by $A^{c}$.

Let $\PCC^k(\Omega)$ be the set of those piecewise constant continuous functions in $C(\Omega)$,  which
depend only on cylinders of length/depth $k$.
While the set of piecewise constant continuous functions in $C(\Omega)$, is denoted by
$\PCC(\OOO)$. 
Obviously $\PCC(\OOO)=\cup_{k}\PCC^{k}(\OOO)$.

The $(1/2, 1/2)$-Bernoulli measure, the ``Lebesgue measure" on $\Omega$ is denoted by $\lambda$. 
In case we write $\int f$ for an $f:\OOO\to\RR$ we always mean $\int_{\OOO}fd\lll$.

We denote  by $C_0(\Omega)$  the set of continuous functions for which $\int f  = 0$, and $\PCC_0^k(\Omega)=\PCC^{k}(\OOO)\cap C_0(\Omega).$ 

Given $f \in C(\Omega)$, we denote $\norm{f} = \sup_{\omega \in \Omega}|f(\omega)|$, and for any $\delta > 0$, $B(f, \delta) = \{g \in C(\Omega): \norm{f-g} < \delta \}$. 

Recall \eqref{*defefa} and the subsequent definitions of  $E_f(\alpha)$,
$S_{f}(\aaa)$.
We remark that our definition of $S_f(\alpha)$ is a bit different from the usual notation  in multifractal analysis, since quite often 
$S_{f}(\aaa)$ is defined to be $-\oo$ when $E_{f}(\aaa)$ is empty.
 
As previously defined, we set $\alpha_{f, \max}^* = \sup\{\alpha \in \RR : E_f(\alpha) \neq \emptyset \}$, where $\alpha_{f, \min}^* = \inf\{\alpha \in \RR : E_f(\alpha) \neq \emptyset \}$. 
In general we have $\alpha_{f, \min} \leq \alpha_{f, \min}^* \leq \alpha_{f, \max}^* \leq \alpha_{f, \max}$, and it is possible for the strict inequalities to hold (including the first and the third inequality), as we will see in an example (cf. 
Example $\ref{*propdisc}$). In fact, as Theorem \ref{*typsup} shows this property is true for the generic continuous functions as well.

The $\sss$-invariant Borel probability measures are denoted by $\mathcal{M}_\sigma$. 
By Birkhoff's Ergodic Theorem, we know that $\lambda(E_f(\int f)) = 1$. Furthermore, if $\{C_i\}_{i=1}^\infty$ are cylinders in $\OOO$  of length at least $k \in \NN$ and $E_f(\int f) \subset \bigcup_{i=1}^\infty C_i$ then
$$1 = \lambda\left(E_f\left(\int f\right)\right) \leq  \sum_{i=1}^\infty \lambda(C_i) = \sum_{i=1}^\infty \diam (C_i),$$
which implies that $1 \leq \mathcal{H}_{2^{-k}}(E_f(\int f)) \leq \mathcal{H}_{2^{-k}}(\Omega)$ for any $k \in \NN$, and thus
  $S_{f}(\int f d\lll)=1$. 
Given $f\in C(\OOO)$ and $\aaa\in\RR$ we will also use the following subsets of $\cam_{\sss}$
\begin{equation}\label{*defff}
\mathcal{F}_f(\alpha) :=\Big  \{\mu \in \mathcal{M}_\sigma : \int f \, d\mu = \alpha \Big \}. 
\end{equation}

\subsection{Examples}
We present a few examples of Birkhoff spectra of certain $PCC(\Omega)$ functions. 
We will first provide an example for a function with continuous spectrum.

\begin{example} \label{examplecontspect}
Let $f\in C(\Omega)$ be the function given by $f(\omega)=1$ if $\omega_1=1$ and $f(\omega)=0$ if $\omega_1=0$. 
Then for any $\alpha\in (0,1)$ we have 
\[S_f(\alpha)=-\frac{\alpha\log(\alpha)+(1-\alpha)\log(1-\alpha)}{\log 2},\]
if $\aaa\not \in (0,1)$ then $S_{f}(\aaa)=0$.
In particular, $f$ has continuous spectrum, as $\alpha_{f,\min}^*=0$, $\alpha_{f,\max}^*=1$, and furthermore, $\partial^+S_{f}(\alpha_{f, \min}^*) = +\infty$ and $\partial^-S_{f}(\alpha_{f, \max}^*) = -\infty$. 
\end{example}

\begin{proof}[Verification of the properties of Example \ref{examplecontspect}]
We will prove two inequalities using suitably defined H\"older functions and the result of \cite{Eggleston}. First, let us consider the function $h_1:\Omega\to[0,1]$ defined by
\[h_1(\omega)=\sum_{i=1}^{\infty}\frac{\omega_i}{2^i}.\]
That is, $h_1$ takes a 0-1 sequence to the number with the corresponding binary expansion. 
We claim that $h_1$ is a Lipschitz function in fact. 
Indeed, if $\omega'$ differs from $\omega$ in its $n$th coordinate, but not before that point, then $d(\omega,\omega')\geq{2^{-n}}$, while $|h_1(\omega)-h_1(\omega')|\leq{2^{-n+1}}$, hence $h_1$ has Lipschitz constant 2. Moreover, $h_1(E_f(\alpha))$ equals the set of numbers in $[0,1]$ having a binary expansion in which the density of $1$s equals $\alpha$. 
Thus due to \cite{Eggleston}, the dimension of $h_1(E_f(\alpha))$ is given by the formula in the statement of the lemma, yielding
\[S_f(\alpha)\geq-\frac{\alpha\log(\alpha)+(1-\alpha)\log(1-\alpha)}{\log 2},\]
as $h_1$ is Lipschitz.

Concerning the other inequality, define $h_2:C\to\Omega$ for the triadic Cantor set $C\subset[0,1]$: if the triadic expansion of $x\in C$ is
\[x=\sum_{i=1}^{\infty}\frac{x_i}{3^i},\]
then let $\omega=h_2(x)$ have coordinates $\frac{x_1}{2},\frac{x_2}{2},...$.
That is, $h_2$ is a one-to-one mapping between $\Omega$ and $C$. 
Now if $x$ differs from $x'$ in its $n$th coordinate, but not before that point, then $|x-x'|\geq {3^{-n}}$. 
On the other hand, $d(h_2(\omega),h_2(\omega'))\leq 2^{-n+1}$. 
It quickly yields that $h_2$ is a H\"older function with exponent $\frac{\log 2}{\log 3}$. 
Moreover, $h_2^{-1}(E_f(\alpha))$ is the set of numbers in $[0,1]$ having a ternary expansion with no $1$s, in which the density of $2$s is $\alpha$ and the density of $0$s is $1-\alpha$. 
Hence $h_2^{-1}(E_f(\alpha))$ is contained by the set of numbers in $[0,1]$ having a ternary expansion in which the density of $2$s is $\alpha$ and the density of $0$s is $1-\alpha$. 
Thus due to \cite{Eggleston}, the dimension of $h_2^{-1}(E_f(\alpha))$ is at most
\[-\frac{\alpha\log(\alpha)+(1-\alpha)\log(1-\alpha)}{\log 3}.\]
Hence as $h_2$ is $\frac{\log 2}{\log 3}$-H\"older, we obtain an upper estimate for $S_f(\alpha)$, that is the dimension of $E_f(\alpha)$, notably
\[S_f(\alpha)\leq-\frac{\alpha\log(\alpha)+(1-\alpha)\log(1-\alpha)}{\log 2}.\]
This shows that the desired equality holds, and the remaining claims clearly follow.
\end{proof}
\begin{remark}
	One can use the fact that $S_f$ is the Legendre transform of the topological pressure function $P(tf)$ to obtain a less direct argument that verifies the formula in Example \ref{examplecontspect}.
\end{remark}

Next, we will see examples of continuous functions with discontinuous spectra.

\begin{example}\label{cob_ex}
If $f$ is a constant function, i.e. $f \equiv C \in \RR$, then $S_f(C) =1$ and $S_{f}(\aaa)=0$ otherwise. 
The same is true if $f$ is cohomologous to a constant, i.e. there exists $g \in C(\Omega)$ for which $f = C  + g - g \circ \sigma$ (we recall that if $C$ is zero, $f$ is called a coboundary). 
\end{example}

Finally, we give an example where $\alpha_{f,\min}<\alpha_{f,\min}^*<\alpha_{f,\max}^*<\alpha_{f,\max}$ (that is, strict inequalities are satisfied), and the Birkhoff spectrum is discontinuous.

\begin{example}\label{*propdisc}
There exists
$f\in \PCC_0^{3}(\Omega)$ satisfying $\alpha_{f,\min}<\alpha_{f,\min}^*<\alpha_{f,\max}^*<\alpha_{f,\max}$ and $S_f(\alpha_{f,\min}^*),S_f(\alpha_{f,\max}^*)>0$.
\end{example}

\begin{proof}
As $f\in\PCC_0^3(\Omega)$ we can define it by giving its values on $3$-cylinders by abusing a bit the notation for $f$. 
We define $f$ by $f([000])=f([010])=-2$, $f([001])=-3$, $f([100])=-1$, and $f(\overline{\ooo})=-f(\ooo)$. 
Then we clearly have $\alpha_{f,\min}=-3$ while $\alpha_{f,\max}=3$.

Now we claim $\alpha_{f,\min}^*=-2$, while $\alpha_{f,\max}^*=2$, which would yield the inequalities $\alpha_{f,\min}<\alpha_{f,\min}^*<\alpha_{f,\max}^*<\alpha_{f,\max}$. 
Due to symmetry reasons, it suffices to verify $\alpha_{f,\min}^*=-2$. 
To this end, consider an arbitrary $\omega\in\Omega$. 
Now we are interested in the averages $\frac{1}{N} \sum_{n=1}^{N} f(\sigma^n \omega)$. 
In the sequence $f(\sigma^n \omega)$ each value is at least -2, except for the cases when the first three coordinates of $\sigma^n \omega$ are $001$. 
However, in this case the first three coordinates of $\sigma^{n+2}\omega$ contain at least two $1$s, or they are $100$. 
In either case, $f(\sigma^{n+2} \omega)\geq{-1}$. 
This argument shows that in the sum $\sum_{n=1}^{N} f(\sigma^n \omega)$ the summands with value -3 can be paired with summands with value at least -1, except for possibly the last one, whose pair does not appear in the sum. 
Besides that, all the other summands have value at least -2. 
Consequently, the average $\frac{1}{N} \sum_{n=1}^{N} f(\sigma^n \omega)\geq -2-\frac{3}{N}$, hence the limit is at least -2, verifying $\alpha_{f,\min}^*\geq -2$. 
For the other inequality, we may simply consider the identically 0 sequence, hence $\alpha_{f,\min}^*=-2$. 
It proves the claim of this paragraph.

It remains to show that $S_f(\alpha_{f,\min}^*),S_f(\alpha_{f,\max}^*)>0$. 
Due to symmetry reasons, these quantities are clearly equal, hence $S_f(\alpha_{f,\min}^*)>0$ would be sufficient. 
Consider the following subset of $\Omega$:
\begin{displaymath}
B=\{\omega\in\Omega: \omega_k=0 \text{ for } k\equiv{1,2} \mod 3\}.
\end{displaymath}
Then for any $\omega\in B$ and $n$ we have that at least two of the first three coordinates of $\sigma^n \omega$ equals 0. Consequently, $f(\sigma^n \omega)<0$. 
Moreover, similarly to the previous argument we find that the in the sum $\sum_{n=1}^{N} f(\sigma^n \omega)$ the summands with value -3 can be paired with summands with value -1, except for possibly the last one. 
All the other summands have value -2. Hence we find
\begin{displaymath}
-2-\frac{1}{N}\leq\frac{1}{N} \sum_{n=1}^{N} f(\sigma^n \omega)\leq -2.
\end{displaymath}
It proves that $B\subset E_f(-2)$, hence $\dim_H B>0$ would conclude the proof. 
However, this dimension can be calculated explicitly as $B$ is a self-similar set, which equals the disjoint union of its $2$ similar images, where the similarities have ratio $\frac{1}{8}$. 
Thus $\dim_H B= \frac{\log 2}{\log 8}=\frac{1}{3}$ by Hutchinson's Theorem \cite{Hutch}.
\end{proof}

\subsection{Variational formula}
The following result was obtained by Fan, Feng, and Wu. 
We present this result in the context of the full-shift on an alphabet of two symbols $(\Omega, \sigma)$ (in \cite{Fan_Feng_Wu_Recurrence}, they proved the result for a topologically mixing subshift of finite type). 

\begin{theorem}[{\cite[Theorem A]{Fan_Feng_Wu_Recurrence}}]\label{theoremFFW} Suppose that $f: \Omega \to \RR^d$ is a continuous function. 
We denote $L_f := \{\alpha \in \RR^{d}: \alpha = \lim_{N \to \infty} \frac{1}{N} \sum_{n=1}^N f(\sigma^n \omega) \text{ for some } \omega \in \OOO \}$. 
There exists a concave and upper semi-continuous function $\Lambda_f$ such that for any $\alpha\in L_f$
\[S_f(\alpha) := \dim_H(E_f(\alpha)) = \Lambda_f(\alpha), \]
and
\[ \Lambda_f(\alpha) = \max_{\mu \in \mathcal{F}_f(\alpha)}\frac{ h_\mu }{\log 2} \]
where $h_\mu$ is the metric entropy of $\mu$, and $\mathcal{F}_f(\alpha)$
can be defined analogously to \eqref{*defff}.

\end{theorem}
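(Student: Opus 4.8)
The plan is to prove the two inequalities $\dim_H E_f(\alpha)\le\Lambda_f(\alpha)$ and $\dim_H E_f(\alpha)\ge\Lambda_f(\alpha)$, where $\Lambda_f(\alpha):=\max\{h_\mu/\log 2:\mu\in\mathcal F_f(\alpha)\}$, recording along the way the structural assertions of the statement: that the maximum is attained, that $L_f=\{\int f\,d\mu:\mu\in\mathcal M_\sigma\}$, and that $\Lambda_f$ is concave and upper semicontinuous. The preliminary work is to introduce, for $q\in\RR^d$, the topological pressure $P(q):=P_{\mathrm{top}}(\langle q,f\rangle)$; on the full shift $P$ is finite and convex, the variational principle gives $P(q)=\sup_{\mu\in\mathcal M_\sigma}\big(h_\mu+\int\langle q,f\rangle\,d\mu\big)$, and since $(\OOO,\sigma)$ is expansive the entropy map $\mu\mapsto h_\mu$ is upper semicontinuous on the weak-$*$ compact set $\mathcal M_\sigma$, so the supremum is attained. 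The same upper semicontinuity, together with weak-$*$ compactness of $\mathcal F_f(\alpha)$ and continuity of $f$, shows that the maximum defining $\Lambda_f(\alpha)$ is attained and that $\Lambda_f$ is upper semicontinuous on $L_f$; affineness of $\mu\mapsto h_\mu$ and of $\mu\mapsto\int f\,d\mu$ makes $\{(\alpha,t):t\le\Lambda_f(\alpha)\}$ convex, hence $\Lambda_f$ concave, and Birkhoff's ergodic theorem identifies $L_f$ with the compact convex set of barycentres $\{\int f\,d\mu\}$ (ergodic measures give that every such integral is attained as a limit along an orbit, weak-$*$ limits of empirical measures give the reverse inclusion).

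For the upper bound, fix $\alpha\in L_f$, $q\in\RR^d$ and $\varepsilon>0$. Since $E_f(\alpha)$ is the countable union over $m\in\NN$ of the sets $E_{m,\varepsilon}$ of those $\omega$ with $\big|\frac1N\sum_{n=1}^N f(\sigma^n\omega)-\alpha\big|<\varepsilon$ for all $N\ge m$, it suffices to bound $\dim_H E_{m,\varepsilon}$. Cover $E_{m,\varepsilon}$ by length-$N$ cylinders with $N\ge m$: each such $[w]$ meeting $E_{m,\varepsilon}$ contributes at least $e^{N(\langle q,\alpha\rangle-\norm{q}\varepsilon)}$ to the partition sum $\sum_{|w|=N}\exp\big(\sup_{[w]}\sum_{n=1}^N\langle q,f(\sigma^n\cdot)\rangle\big)\le e^{N(P(q)+\varepsilon)}$ (valid for $N$ large), so the number of such cylinders is at most $e^{N(P(q)-\langle q,\alpha\rangle+(1+\norm{q})\varepsilon)}$; as they have diameter $2^{-N}$, the associated $s$-sum tends to $0$ once $s\log 2>P(q)-\langle q,\alpha\rangle+(1+\norm{q})\varepsilon$. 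Letting $\varepsilon\to0$ and optimising over $q$ gives $\dim_H E_f(\alpha)\le\inf_{q\in\RR^d}\big(P(q)-\langle q,\alpha\rangle\big)/\log 2$. To finish, observe that by the variational principle $P(q)=\sup_{\alpha\in L_f}\big(\langle q,\alpha\rangle+\log 2\cdot\Lambda_f(\alpha)\big)$; since $\alpha\mapsto\log 2\cdot\Lambda_f(\alpha)$ is concave and upper semicontinuous on the closed set $L_f$ — this is where the structural facts of the first paragraph enter, and it is what makes the identity valid up to the boundary of $L_f$ — the Fenchel–Moreau biconjugation theorem returns $\log 2\cdot\Lambda_f(\alpha)=\inf_{q\in\RR^d}\big(P(q)-\langle q,\alpha\rangle\big)$ for every $\alpha\in L_f$, hence $\dim_H E_f(\alpha)\le\Lambda_f(\alpha)$.

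For the lower bound, fix $\mu\in\mathcal F_f(\alpha)$; the goal is a subset $F\subseteq E_f(\alpha)$ with $\dim_H F\ge h_\mu/\log 2$. When $\mu$ is ergodic this is immediate: Birkhoff's theorem gives $\mu(E_f(\alpha))=1$, the Shannon--McMillan--Breiman theorem gives $\mu([\omega|N])=2^{-N(h_\mu/\log 2+o(1))}$ for $\mu$-a.e.\ $\omega$, and applying the mass distribution principle to $\mu$ restricted to a positive-measure subset on which both convergences are uniform yields $\dim_H E_f(\alpha)\ge h_\mu/\log 2$. The general case is the \emph{main obstacle}: $\mu$ need not be ergodic, and since $\{\int f\,d\nu:\nu\text{ ergodic}\}$ need not be convex there may be no ergodic measure whose $f$-integral is exactly $\alpha$. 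The remedy is a non-homogeneous Moran construction: choose ergodic $\nu_k$ with $\int f\,d\nu_k\to\alpha$ and $h_{\nu_k}\to h_\mu$ — possible because periodic (equivalently Markov) measures are dense in $\mathcal M_\sigma$ both weak-$*$ and in entropy on the full shift — fix rapidly growing stage lengths, and build $F$ by concatenating, at stage $k$, long words that are simultaneously $(\nu_k,\varepsilon_k)$-typical for the Birkhoff average of $f$ and for the SMB estimate, equipping $F$ with the natural Moran measure that splits the stage-$k$ mass according to $\nu_k$. If the stage lengths grow fast enough, every point of $F$ lies in $E_f(\alpha)$, while a local-dimension estimate for the Moran measure along the hierarchy gives $\dim_H F\ge\liminf_k h_{\nu_k}/\log 2=h_\mu/\log 2$. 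Taking the supremum over $\mu\in\mathcal F_f(\alpha)$ yields $\dim_H E_f(\alpha)\ge\Lambda_f(\alpha)$ and completes the proof. The delicate points in this last step are the simultaneous typicality of the blocks at each stage, checking that the comparatively short transitions between stages do not spoil the Birkhoff average, and verifying the local-dimension bound for the Moran measure uniformly along the hierarchy.
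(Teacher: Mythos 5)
The paper does not prove this theorem: it is quoted (specialised to the two-symbol full shift) from Fan, Feng and Wu, and the surrounding text records their route. In the cited paper $\Lambda_f(\alpha)$ is \emph{defined} as a cylinder-counting quantity --- the exponential growth rate of the number of $n$-cylinders containing a point whose $n$-step Birkhoff average is close to $\alpha$ --- after which concavity and upper semicontinuity are proved directly, and the identities $\Lambda_f(\alpha)=\dim_H E_f(\alpha)$ and $\Lambda_f(\alpha)=\max_{\mu\in\mathcal F_f(\alpha)}h_\mu/\log 2$ are established separately. Your proposal takes a genuinely different route: you take the variational maximum as the definition of $\Lambda_f$ and work through topological pressure, getting the upper bound from a pressure-controlled cylinder cover plus Fenchel--Moreau biconjugation, and the lower bound from the mass-distribution principle in the ergodic case and a non-homogeneous Moran/concatenation construction in general. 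Both routes are standard and valid; the pressure/Legendre approach makes the upper bound and the concavity statement conceptually transparent, but its lower bound rests on two facts you assert only in outline: entropy-denseness of ergodic measures in $\mathcal M_\sigma$ (ergodic $\nu_k\to\mu$ weak-$*$ with $h_{\nu_k}\to h_\mu$) and the simultaneous-typicality and stage-length bookkeeping needed to make the Moran construction land in $E_f(\alpha)$ with the right local dimension. These hold on the full shift, but they are exactly where the technical weight of the theorem sits, and in your sketch they remain gestured at rather than carried out; you also need, and correctly flag, a bounded-distortion fudge because $f$ is merely continuous rather than locally constant. In short: sound in outline, along a thermodynamic line closer to Pesin--Weiss or Takens--Verbitskiy than to Fan--Feng--Wu's counting argument, but with the Moran step left as the genuine open work --- and note that the paper itself cites this theorem rather than reproving it.
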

The function $\Lambda_f(\alpha)$ is defined in the same paper \cite[Proposition 5]{Fan_Feng_Wu_Recurrence} using the cardinality of the cylinders of large length that contain at least one point $\omega$ for which the Birkhoff average of $f$ of that length is close to $\alpha$. 
It was later shown that the quantity $\Lambda_f(\alpha)$ indeed agrees with $S_f(\alpha)$ for all $\alpha \in L_f$ \cite[Proposition 6]{Fan_Feng_Wu_Recurrence}.

\section{Tools}

\subsection{Norm Continuity Theorem}\label{SS-NCT}
We first prove that two Birkhoff spectra of two continuous functions are close (except near the endpoints) if those two functions are close in the supremum norm.

\begin{theorem}[Norm continuity theorem]\label{Thm:NormCont}
Let $f \in C(\Omega)$ for which $\alpha_{f, \min}^* < \alpha_{f, \max}^*$,
and $\varepsilon \in (0, {(\alpha_{f, \max}^* - \alpha_{f, \min}^*)}/{2})$ be given. 
Then there exists $\delta > 0$ such that for any $g \in B(f, \delta)$, we have $|S_f(\alpha) - S_g(\alpha)| <\varepsilon$ for all $\alpha \in 
(\alpha_{f, \min}^*+\eee,\alpha_{f, \max}^*-\eee)$.
\end{theorem}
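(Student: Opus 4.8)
The plan is to use the variational formula from Theorem \ref{theoremFFW}, which identifies $S_f(\alpha)$ with $\max_{\mu \in \mathcal{F}_f(\alpha)} h_\mu / \log 2$, and to exploit the fact that when $f$ and $g$ are close in the supremum norm, the constraint sets $\mathcal{F}_f(\alpha)$ and $\mathcal{F}_g(\alpha')$ are ``close'' in the sense that a measure integrating $f$ to $\alpha$ integrates $g$ to something within $\delta$ of $\alpha$. Concretely, if $\norm{f - g} < \delta$ and $\mu \in \mathcal{F}_f(\alpha)$, then $\int g\, d\mu \in (\alpha - \delta, \alpha + \delta)$, so $\mu \in \mathcal{F}_g(\alpha'')$ for some $\alpha''$ with $|\alpha'' - \alpha| < \delta$. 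The entropy $h_\mu$ of the witnessing measure does not change at all. Thus $S_g$ at a nearby point is at least $S_f(\alpha)$, and symmetrically. The real content is to convert this ``$S_g$ is large at a nearby point'' into ``$S_g$ is large at $\alpha$ itself'', and that is where concavity and the uniform separation from the endpoints enter.

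First I would fix $\eee$ and work on the compact interval $I = [\alpha_{f,\min}^* + \eee/2, \alpha_{f,\max}^* - \eee/2]$. On $I$ the function $S_f$ is continuous (being concave and finite on a neighborhood of $I$ inside $L_f$), hence uniformly continuous, so there is $\eta \in (0, \eee/4)$ with $|S_f(\alpha) - S_f(\alpha')| < \eee$ whenever $\alpha, \alpha' \in I$ and $|\alpha - \alpha'| < \eta$. I would take $\delta < \eta$ (and also $\delta < \eee/4$, so that the endpoints of $L_g$ cannot intrude too far — this needs a short separate argument, see below). Now fix $\alpha \in (\alpha_{f,\min}^* + \eee, \alpha_{f,\max}^* - \eee)$. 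Choosing $\mu \in \mathcal{F}_f(\alpha)$ achieving the maximum $h_\mu/\log 2 = S_f(\alpha)$, we get $\int g\, d\mu = \alpha'$ with $|\alpha' - \alpha| < \delta < \eta$; since $\alpha' \in L_g$ (it is realized by an invariant measure, hence by an ergodic one whose generic points lie in $E_g(\alpha')$), the variational formula gives $S_g(\alpha') \geq h_\mu/\log 2 = S_f(\alpha)$. To pass from $\alpha'$ back to $\alpha$ I would use concavity of $S_g$ on $L_g$: provided $\alpha$ lies in the interval between $\alpha'$ and some point where $S_g$ is at least as large (and provided both $\alpha$ and $\alpha'$ are in the interior of $L_g$, away from its endpoints), concavity forces $S_g(\alpha) \geq \min(S_g(\alpha'), S_g(\text{other point}))$; a clean way is to note $S_g$ attains its maximum value $1$ at $\int g$, which is within $\delta$ of $\int f$, and to argue that $\alpha$ lies between $\alpha'$ and $\int g$ (or handle the case it doesn't by a symmetric choice), whence $S_g(\alpha) \geq S_g(\alpha') \geq S_f(\alpha)$. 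Running the same argument with the roles of $f$ and $g$ swapped — now starting from the maximizing measure for $S_g(\alpha)$ — yields $S_f(\alpha') \geq S_g(\alpha)$ for some $|\alpha' - \alpha| < \delta$, and then uniform continuity of $S_f$ on $I$ gives $S_f(\alpha) \geq S_f(\alpha') - \eee \geq S_g(\alpha) - \eee$. Combining, $|S_f(\alpha) - S_g(\alpha)| < \eee$.

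Two points need care. First, to invoke concavity of $S_g$ I must know that the point in question lies in the interior of $L_g$ and is bounded away from $\partial L_g$; this is why I need $\alpha_{g,\min}^* < \alpha_{f,\min}^* + \eee$ and $\alpha_{f,\max}^* - \eee < \alpha_{g,\max}^*$. This follows because $E_f(\alpha_{f,\min}^* + \eee/2) \neq \emptyset$ is witnessed by an ergodic measure $\mu$ with $\int f\, d\mu = \alpha_{f,\min}^* + \eee/2$; then $\int g\, d\mu < \alpha_{f,\min}^* + \eee/2 + \delta < \alpha_{f,\min}^* + \eee$, and the ergodic theorem places $\lambda$-generic... rather, $\mu$-generic points of $\mu$ in $E_g(\int g\, d\mu)$, so $\alpha_{g,\min}^* \leq \int g\, d\mu < \alpha_{f,\min}^* + \eee$; symmetrically at the top. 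Second, the monotone/concavity bookkeeping to get from $\alpha'$ to $\alpha$: I expect this to be the main obstacle, because one must verify in each of the finitely many positional cases (is $\alpha$ left or right of $\int g$, is $\alpha'$ left or right of $\alpha$) that the relevant three-point concavity inequality applies, and one must make sure the comparison points used are genuinely in $L_g$ and carry the entropy bound one claims. A slightly slicker route that avoids casework is: observe $S_g(\beta) \ge S_f(\alpha)$ for \emph{every} $\beta$ in the closed interval with endpoints $\alpha'$ and $\int g$, directly from concavity of $S_g$ together with $S_g(\int g) = 1 \ge S_f(\alpha)$ and $S_g(\alpha') \ge S_f(\alpha)$; since $|\alpha - \alpha'| < \delta$ and $\alpha$ lies on the same side of $\alpha'$ as $\int g$ whenever $\delta$ is small relative to $|\alpha - \int g|$... this still needs $\alpha \neq \int g$, so one finally treats the neighborhood of $\int f$ separately using $S_f(\int f) = S_g(\int g) = 1$ and continuity. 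I would present the argument in this last form, isolating the ``near the barycenter'' case.
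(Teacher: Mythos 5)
Your overall strategy — establish a pointwise comparison lemma via the variational formula (your $\mu \in \mathcal{F}_f(\alpha)$ with $\int g\,d\mu = \alpha'$ close to $\alpha$ and $S_g(\alpha') \geq h_\mu/\log 2 = S_f(\alpha)$), then use concavity of the spectrum to move from the nearby point $\alpha'$ back to $\alpha$ — is exactly the approach in the paper. The reverse inequality you obtain via uniform continuity of $S_f$ on the slightly larger compact interval $I$, combined with the swapped lemma, is fine. But the forward direction has a genuine gap.

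You want to conclude $S_g(\alpha) \geq S_g(\alpha')$ by noting that $\alpha$ lies in the interval with endpoints $\alpha'$ and $\int g$ and that $S_g$ is concave with $S_g(\int g) = 1$. The reduction to that configuration is what fails. Say (WLOG) $\alpha > \int g$; the condition ``$\alpha$ lies on the same side of $\alpha'$ as $\int g$'' is the condition $\alpha < \alpha'$, i.e.\ that the perturbation pushed $\alpha'$ \emph{away} from $\int g$. There is no reason for this to hold: $\alpha' = \int g\,d\mu$ can land on either side of $\alpha$, and taking $\delta$ small relative to $|\alpha - \int g|$ does not help — it only guarantees $\alpha' > \int g$, not $\alpha' > \alpha$. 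When $\int g < \alpha' < \alpha$, the point $\alpha$ is outside the interval $[\int g, \alpha']$, the three-point concavity inequality no longer applies, and in fact $S_g$ is non-increasing to the right of $\int g$, so the inequality $S_g(\alpha) \geq S_g(\alpha')$ goes the wrong way. A symptom that the claimed forward step $S_g(\alpha) \geq S_f(\alpha)$ cannot be correct as stated: if it held for all $g$ near $f$ and all $\alpha$ in the interior interval, swapping $f$ and $g$ would give $S_f = S_g$ there for every nearby $g$, contradicting (say) $g = f + c$ with small $c \neq 0$, which translates the spectrum. The ``treat the neighborhood of $\int f$ separately'' remark does not fix this, since the problematic case is not $\alpha$ near $\int g$ but $\alpha'$ landing strictly between $\int g$ and $\alpha$.

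The gap is fixable, but requires a quantitative ingredient you do not supply. One route: on the interval in question, $\alpha$ is at distance at least $\eee - \delta$ from $\alpha_{g,\max}^*$ (using Remark~\ref{*remsup}), so by concavity the one-sided slopes of $S_g$ near $\alpha$ are bounded in magnitude by $1/(\eee - \delta)$, giving $|S_g(\alpha) - S_g(\alpha')| \leq \delta/(\eee - \delta)$; choosing $\delta$ small compared with $\eee^2$ makes this $< \eee$ and salvages the forward direction in the form $S_g(\alpha) > S_f(\alpha) - \eee$. The paper instead avoids this casework entirely by choosing a finite $\eee/4$-net $\alpha_1 < \cdots < \alpha_L$ of $L_f$, applying Lemma~\ref{Lemma:ptEstimate} at each $\alpha_i$ to get $\alpha_i' \approx \alpha_i$ with $S_g(\alpha_i') \geq S_f(\alpha_i)$, and then using concavity of $S_g$ once globally: the graph of $S_g$ lies above the polygonal interpolant through the $(\alpha_i', S_f(\alpha_i))$, which by the fineness of the net is within $\eee$ of $S_f$ on the interior interval. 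You should either incorporate the slope bound explicitly or switch to the finite-net argument.
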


\begin{remark}
	We will later learn that the generic continuous function satisfies the hypothesis of this theorem; see Theorem \ref{*typsup}.
\end{remark}

If one considers $f,g\in C(\OOO)$ with continuous spectrum then the above theorem can be used to show that for given $\eee>0$ one can find $\ddd>0$ such that
$\|f-g\|<\ddd$ implies that $\|S_{f}-S_{g}\|<\eee$. On the other hand, if $f$ has discontinuous spectrum, say $S_{f}(\aaa_{f,\max}^{*})>0$ then the density of functions
with continuous spectrum (Theorem \ref{contspectrum}) and Remark \ref{*remsup}
imply that arbitrary close to $f$ one can find functions $g$ such that
$\|S_{f}-S_{g}\|> S_{f}(\aaa_{f,\max}^{*})/2$.

To proceed, we first prove the following lemma.


\begin{lemma}\label{Lemma:ptEstimate}
Let $\varepsilon > 0$ be given. 
Suppose that $f \in C(\Omega)$, and $\alpha \in [\alpha_{f, \min}^*,\alpha_{f, \max}^*]$. 
Then for any $g \in {C}(\Omega)$ such that $\norm{f - g} <\varepsilon$, there exists $\alpha' \in (\alpha-\varepsilon, \alpha+\varepsilon)$ for which $S_g(\alpha') \geq S_f(\alpha)$. 
If $S_f(\alpha)=0$, but $E_{f}(\aaa)\not=\ess$ then $E_{g}(\aaa')\not=\ess$.
\end{lemma}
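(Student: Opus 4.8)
The plan is to transfer measures witnessing the spectrum of $f$ into measures witnessing a nearby value of the spectrum of $g$, using the variational formula of Theorem \ref{theoremFFW}. First I would deal with the case $S_f(\alpha) > 0$, where by Theorem \ref{theoremFFW} there exists $\mu \in \mathcal{M}_\sigma$ with $\int f \, d\mu = \alpha$ and $h_\mu / \log 2 = S_f(\alpha)$. Set $\alpha' := \int g \, d\mu$. Since $\norm{f - g} < \varepsilon$ we have
\[
|\alpha' - \alpha| = \left| \int (g - f) \, d\mu \right| \leq \norm{f - g} < \varepsilon,
\]
so $\alpha' \in (\alpha - \varepsilon, \alpha + \varepsilon)$. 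Moreover $\mu \in \mathcal{F}_g(\alpha')$ by construction, so $\alpha' \in L_g$ and by Theorem \ref{theoremFFW} again,
\[
S_g(\alpha') = \max_{\nu \in \mathcal{F}_g(\alpha')} \frac{h_\nu}{\log 2} \geq \frac{h_\mu}{\log 2} = S_f(\alpha).
\]
This handles the main case.

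For the remaining part, suppose $S_f(\alpha) = 0$ but $E_f(\alpha) \neq \emptyset$; I must produce $\alpha' \in (\alpha - \varepsilon, \alpha + \varepsilon)$ with $E_g(\alpha') \neq \emptyset$ (which automatically gives $S_g(\alpha') \geq 0 = S_f(\alpha)$, so the two assertions of the lemma are covered simultaneously). Pick any $\omega \in E_f(\alpha)$, so that $\lim_{N} \frac{1}{N}\sum_{n=1}^N f(\sigma^n \omega) = \alpha$. The sequence of empirical measures $\mu_N := \frac{1}{N} \sum_{n=1}^N \delta_{\sigma^n \omega}$ satisfies $\int f \, d\mu_N \to \alpha$; passing to a weak-$*$ convergent subsequence $\mu_{N_j} \to \mu$ yields, by continuity of $f$ and $g$, an element $\mu \in \mathcal{M}_\sigma$ with $\int f \, d\mu = \alpha$. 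Setting $\alpha' := \int g \, d\mu$ gives $|\alpha' - \alpha| \leq \norm{f-g} < \varepsilon$ exactly as before, and $\mathcal{F}_g(\alpha') \ni \mu \neq \emptyset$, hence $\alpha' \in L_g$, hence $E_g(\alpha') \neq \emptyset$ by the definition of $L_g$ in Theorem \ref{theoremFFW}. (Alternatively, one can simply invoke the first case, observing that the argument there only used the existence of a measure in $\mathcal{F}_f(\alpha)$, which $E_f(\alpha) \neq \emptyset$ guarantees via the empirical-measure construction; the entropy bound $h_\mu \geq 0$ then gives $S_g(\alpha') \geq 0$.)

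I expect no serious obstacle here: the only point requiring a little care is making sure that whenever $E_f(\alpha) \neq \emptyset$ — in particular when $\alpha \in [\alpha_{f,\min}^*, \alpha_{f,\max}^*]$ is an endpoint where the spectrum might vanish — the set $\mathcal{F}_f(\alpha)$ is nonempty, which is precisely what the empirical-measure compactness argument provides. The slight subtlety is that the variational formula $S_f(\alpha) = \max_{\mu \in \mathcal{F}_f(\alpha)} h_\mu/\log 2$ of Theorem \ref{theoremFFW} is stated for $\alpha \in L_f$; since $[\alpha_{f,\min}^*, \alpha_{f,\max}^*] = L_f$ (the support of $S_f$ is this closed interval, as recalled in the preliminaries), this causes no difficulty, and the maximum being attained at some $\mu$ is exactly the content of that theorem.
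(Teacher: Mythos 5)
Your first case ($S_f(\alpha)>0$) is identical to the paper's proof: extract an entropy-maximizing $\mu_0\in\mathcal{F}_f(\alpha)$ via Theorem~\ref{theoremFFW}, set $\alpha'=\int g\,d\mu_0$, and compare. That part is fine.

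In the degenerate case $S_f(\alpha)=0$, however, your conclusion ``$\mathcal{F}_g(\alpha')\ni\mu\neq\emptyset$, hence $\alpha'\in L_g$, hence $E_g(\alpha')\neq\emptyset$ by the definition of $L_g$'' slips in a nontrivial step as if it were a definition. $L_g$ is defined as $\{\alpha:E_g(\alpha)\neq\emptyset\}$; the empirical-measure construction gives the \emph{easy} inclusion $L_g\subseteq\{\int g\,d\mu:\mu\in\mathcal{M}_\sigma\}$, whereas you need the \emph{converse} inclusion $\{\int g\,d\mu:\mu\in\mathcal{M}_\sigma\}\subseteq L_g$. That converse is true for the full shift, but it is a theorem (it uses that $L_g$ is an interval and/or a specification-type gluing argument), not something Theorem~\ref{theoremFFW} as stated hands you. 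The paper sidesteps this entirely: since $S_f(\alpha)=0$ forces $\alpha\in\{\alpha^*_{f,\min},\alpha^*_{f,\max}\}$, the level set $\mathcal{F}_f(\alpha)=f_*^{-1}(\alpha)$ is a \emph{face} of the compact convex set $\mathcal{M}_\sigma$ (as $f_*$ is affine and continuous and $\alpha$ is extremal in its image), hence it contains an extreme point of $\mathcal{M}_\sigma$, i.e.\ an ergodic measure; choosing $\mu_0$ to be such an ergodic measure (all of entropy $0$ here, so still a maximizer) and applying Birkhoff's Ergodic Theorem to $g$ and $\mu_0$ exhibits a concrete $\mu_0$-a.e.\ point of $E_g(\alpha')$. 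Your weak-$*$ limit $\mu$ of empirical measures need not be ergodic, so you cannot invoke Birkhoff directly for it; you should either pass to an ergodic component (noting that since $\alpha$ is extremal, a.e.\ ergodic component of $\mu$ also lies in $\mathcal{F}_f(\alpha)$), or prove the converse inclusion $f_*(\mathcal{M}_\sigma)\subseteq L_g$ rather than cite it as definitional.
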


\begin{remark}\label{*remsup}
This implies that if $\|f-g\|<\eee$ then $|\aaa_{f,\max}^{*}-\aaa_{g,\max}^{*}|<\eee$ and $|\aaa_{f,\min}^{*}-\aaa_{g,\min}^{*}|<\eee$.
\end{remark}

\begin{proof}
Recall the definition of  $\caf_{f}(\aaa)$ from \eqref{*defff}.
By 
Theorem \ref{theoremFFW} there exists $\mu_0 \in \mathcal{F}_f(\alpha)$ for which 
\[S_f(\alpha) = \frac{h_{\mu_0}}{\log 2} = \frac{\max_{{\mu} \in \mathcal{F}_f(\alpha)} h_\mu}{\log 2} \, . \]

Set $\alpha' = \int g \, d\mu_0$. 
Since $\norm{f - g} <\varepsilon$, we have $\alpha' \in (\alpha -\varepsilon, \alpha +\varepsilon)$. For $\aaa\in [\alpha_{f, \min}^*,\alpha_{f, \max}^*]$ we have $E_{f}(\aaa)\not=\ess$.
If $S_{f}(\aaa)=0$, then $\aaa \in \{\alpha_{f, \min}^*, \alpha_{f, \max}^*\}$. Consider the map $f_*: \mathcal{M}_\sigma \to L_f$ for which that $f_*(\mu) = \int f \, d\mu$. Since the map $f_*$ is affine and continuous, $\mu_0$ must be one of the extremal points of the convex set $\mathcal{M}_\sigma$. This implies that $\mu_0$ is ergodic, so we may apply Birkhoff's Ergodic Theorem to show that for $\mmm_{0}$
almost every $\ooo$ we have 
$\lim_{N\to\oo}\frac{1}{N}\sum_{n=1}^{N}g(\sss^{n}\ooo)=\aaa'$
and hence $E_{g}(\aaa')\not=\ess$.

Hence, from now on we can suppose that $S_{f}(\aaa)>0$.
In that case since $\mmm_{0}\in \mathcal{F}_g(\alpha')$ by Theorem \ref{theoremFFW} we obtain that
\[S_g(\alpha')=\frac{\max_{{\mu} \in \mathcal{F}_g(\alpha')} h_\mu}{\log 2} \geq \frac{h_{\mu_0}}{\log 2}= S_f(\alpha).\]
\end{proof}

Using this lemma, we will prove the theorem by using concavity of the spectrum.

\begin{proof}[Proof of Theorem $\ref{Thm:NormCont}$]
For some $L \in \NN$, we consider a partition
\[\alpha_{f, \min}^
*
= \alpha_1 < \alpha_2 < \cdots < \alpha_L = \alpha_{ 
f, \max}^* \]
for which for every $i = 1, 2, \ldots, L-1$, $|\alpha_{i+1} - \alpha_i|<\eee/4$ is small enough such that for every $t \in [0, 1]$, we have
\[(1-t)S(\alpha_{i}) + tS(\alpha_{i+1}) > S((1-t)\alpha_i + t\alpha_{i+1}) -\varepsilon/2. \]
For each $\alpha_i$, we choose a positive number $\delta(\alpha_i)<\eee/8$ as follows:
 For any $\alpha_i' \in (\alpha_i - \delta(\alpha_i), \alpha_i + \delta(\alpha_i))$, and $\beta_i' \geq S_f(\alpha_i)$, the line segments connecting the points $(\alpha_i', \beta_i')$ and $(\alpha_{i+1}', \beta_{i+1}')$ 
 are above the graph of $S_f(\alpha) -\varepsilon$ for  $i=2,...,L-2$. 
 We can also suppose that the intervals $(\alpha_i - \delta(\alpha_i), \alpha_i + \delta(\alpha_i))$ are disjoint.
Then we set
\[\delta = \min \{\varepsilon /8, \delta(\alpha_1), \delta(\alpha_2), \ldots, \delta(\alpha_L)\}. \]

 We apply Lemma $\ref{Lemma:ptEstimate}$ with $\eee=\ddd$ to show that there exists $\alpha_i' \in (\alpha_i -\ddd, \alpha_i +\ddd)\sse
 (\alpha_i -\ddd(\aaa_{i}), \alpha_i +\ddd(\aaa_{i}))$ such that $S_g(\alpha_i') \geq S_f(\alpha_i)$ for $i=1,...,L-1$. 
 Since $|\aaa_{1}'-\aaa_{f,\min}^{*}|= |\aaa_{1}'-\aaa_{1}|<\eee/8$
 and $|\aaa_{L}'-\aaa_{f,\max}^{*}|= |\aaa_{L}'-\aaa_{L}|<\eee/8$
by using the concavity of $S_g \, $ one can show that
$S_g(\alpha) > S_f(\alpha) -\varepsilon$
for all $\alpha \in (\alpha_{f, \min}^*+(\eee/2),\alpha_{f, \max}^*-(\eee/2))$. 
By reversing the roles of $f$ and $g$, by an analogous argument we can conclude that
$S_f(\alpha) > S_g(\alpha) -\varepsilon$ for all $\aaa\in (\alpha_{g, \min}^*+(\eee/2),\alpha_{g, \max}^*-(\eee/2))$. Using Remark \ref{*remsup} we can conclude the proof.
\end{proof}

\subsection{Piecewise constant (PCC) functions}\label{*secpccc}

We start with a lemma in which we show that $\alpha_{f, \max}^*$ is a uniform upper bound of the limit of the Birkhoff averages of any $f \in PCC^k$.


\begin{lemma} \label{uniformity}
Assume $f\in\PCC^k(\Omega)$ and $\varepsilon>0$. 
Then there exists $N_0$ such that for any $N \geq N_0$, for any $\omega \in \Omega$, we have
\begin{equation}\label{avgN4}
\frac{1}{N} \sum_{n=1}^{N} f(\sigma^n \omega) \leq 
\alpha_{f, \max}^* +\varepsilon,
\end{equation}
which implies that
\begin{equation}\label{*avgN4}
\limsup_{N\to\oo} \frac{1}{N} \sum_{n=1}^{N} f(\sigma^n \omega) \leq 
\alpha_{f, \max}^*\text{ uniformly for any }\ooo\in\OOO .
\end{equation}
\end{lemma}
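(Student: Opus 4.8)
The plan is to translate the Birkhoff sums of a $\PCC^k$ function into total weights of walks on a finite weighted digraph, and then to bound such weights by the largest mean weight of a cycle.

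\emph{Graph encoding.} Let $G$ be the directed graph with vertex set $V=\{0,1\}^{k-1}$ in which, for each $u=u_1\cdots u_{k-1}\in V$ and each $s\in\{0,1\}$, there is an edge from $u$ to $u_2\cdots u_{k-1}s$ carrying weight equal to the constant value of $f$ on the cylinder $[u_1\cdots u_{k-1}s]$; this is well defined since $f\in\PCC^k(\Omega)$, and there is at most one edge between any ordered pair of vertices. For $\omega\in\Omega$, setting $v_i:=(\omega)_{2+i}^{k+i}$ for $i\ge0$ defines a walk $v_0\to v_1\to\cdots$ in $G$, and a direct check shows that the edge $v_{i-1}\to v_i$ has weight $f(\sigma^i\omega)$; consequently $\sum_{n=1}^{N}f(\sigma^n\omega)$ is the total weight of the length-$N$ walk $v_0\to\cdots\to v_N$, and conversely every finite walk in $G$ is the initial segment of the walk induced by some $\omega$.

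\emph{Maximal mean cycle.} Let $m^*$ be the maximum of $\mathrm{weight}(C)/\mathrm{length}(C)$ over all simple cycles $C$ of $G$; this maximum exists and is finite since $G$ has only finitely many simple cycles. If $C$ attains it, then reading off the periodic sequence of symbols along $C$ yields a point $\omega\in\Omega$ whose induced walk traverses $C$ repeatedly, so $\frac1N\sum_{n=1}^{N}f(\sigma^n\omega)=m^*+O(1/N)$; hence $E_f(m^*)\neq\emptyset$ and therefore $m^*\le\alpha_{f,\max}^*$.

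\emph{Walk decomposition and conclusion.} Set $D:=2|V|\cdot\norm{f}$ and prove, by strong induction on $N$, that every length-$N$ walk $W$ in $G$ satisfies $\mathrm{weight}(W)\le Nm^*+D$. If the vertices of $W$ are pairwise distinct then $N\le|V|-1$ and $\mathrm{weight}(W)\le N\norm{f}\le Nm^*+D$ by the choice of $D$. Otherwise pick $i<j$ with $v_i=v_j$ and $j-i$ minimal; then $v_i\to v_{i+1}\to\cdots\to v_j$ is a simple cycle $C$, and deleting its edges from $W$ produces a legitimate walk $W'$ of length $N-(j-i)$ with $\mathrm{weight}(W)=\mathrm{weight}(W')+\mathrm{weight}(C)\le\mathrm{weight}(W')+(j-i)m^*$; applying the induction hypothesis to $W'$ finishes the claim. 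Together with the graph encoding this gives $\frac1N\sum_{n=1}^{N}f(\sigma^n\omega)\le m^*+D/N\le\alpha_{f,\max}^*+D/N$ for every $\omega\in\Omega$, so any $N_0\ge D/\varepsilon$ yields \eqref{avgN4}; letting $\varepsilon\to0$, and using that $N_0$ is independent of $\omega$, gives the uniform bound \eqref{*avgN4}.

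The main obstacle is the walk-decomposition step: one has to check carefully that removing a simple cycle from a walk again gives a valid walk whose weight differs from the original exactly by $\mathrm{weight}(C)$ (this uses that edges are determined by their endpoints), that a walk through pairwise distinct vertices has length at most $|V|-1$, and that the additive constant $D$ depends only on $k$ and $\norm{f}$; the little verification in the previous step that the symbol pattern along $C$ reproduces the traversal of $C$ is a secondary point. One could alternatively argue more softly, noting that $A_N:=\sup_{\omega}\sum_{n=1}^{N}f(\sigma^n\omega)$ is subadditive, so $A_N/N\to\gamma:=\inf_M A_M/M\ge\alpha_{f,\max}^*$, and that any weak-$*$ limit $\mu$ of the empirical measures along near-optimal orbit segments is $\sigma$-invariant with $\int f\,d\mu\ge\gamma$, whence $\gamma\le\alpha_{f,\max}^*$ by ergodic decomposition and Birkhoff's Ergodic Theorem; this works for any continuous $f$, but the combinatorial argument above is the one in the spirit of this $\PCC$ subsection and of the periodic-orbit statement it leads to.
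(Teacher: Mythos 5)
Your proof is correct, but it takes a genuinely different route from the paper. The paper argues by contradiction: assuming a long orbit segment of $\omega$ has Birkhoff average above $\alpha_{f,\max}^*+\varepsilon$, it splices that segment into a periodic point $\omega'$ of period $N+k$ and shows the Birkhoff \emph{limit} along $\omega'$ still exceeds $\alpha_{f,\max}^*+\varepsilon/2$, contradicting the definition of $\alpha_{f,\max}^*$; the choice of $N_0$ is made up front to absorb the $O(k\norm{f})$ boundary error from the splicing. You instead give a direct, quantitative bound: you encode Birkhoff sums as weights of walks on the de Bruijn graph, show by induction (peeling off a shortest repeated cycle) that every length-$N$ walk has weight at most $Nm^*+D$ with $D=2|V|\norm{f}$ depending only on $k$ and $\norm{f}$, and separately show $m^*\le\alpha_{f,\max}^*$ by exhibiting a periodic point from an optimal cycle. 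Your approach buys a clean explicit error rate $D/N$ and, as a byproduct, already contains the key decomposition the paper reserves for Lemma~\ref{periodicmax} (in fact, combining your maximal-mean-cycle observation with the periodic-point construction essentially proves that lemma too); the paper's approach is more elementary and avoids introducing the graph machinery at this stage. Both are valid; yours is arguably the more unified presentation of this $\PCC$ subsection.
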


\begin{proof}
Choose $N_0$ such that for any $N>N_0$
\begin{equation}\label{avgestimate}
\frac{-k\norm{f}+N(\alpha_{f, \max}^* +\varepsilon)}{N+k}>\alpha_{f, \max}^*+\frac{\varepsilon}{2}.
\end{equation}
We claim that this $N_0$ satisfies the statement of the lemma. 
Proceeding towards a contradiction, assume the existence of a configuration $\omega$ and $N>N_0$ which refutes this claim, that is
\begin{equation}\label{contradict}
\frac{1}{N} \sum_{n=1}^{N} f(\sigma^n \omega) >
\alpha_{f, \max}^* +\varepsilon.
\end{equation}
Our goal is to construct $\ooo'\in \OOO$, periodic by $N+k$
which will satisfy 
\begin{equation}\label{*contrav}
\sum_{n=1}^{N} f(\sigma^n \omega')=\sum_{n=1}^{N} f(\sigma^n \omega)>N(\alpha_{f, \max}^* +\varepsilon),
\end{equation}
 and this will contradict the definition of $\aaa_{f,\max}^{*}$ as we will see in \eqref{*contrav2}.
In the ergodic sums we consider, the first coordinate has no importance, thus it is sufficient to construct $\sigma\omega'$. 
Let it be periodic with period $N+k$ (that is $\sigma^{N+k+1}\omega'=\sigma\omega'$), and define its first $N+k$ coordinates to be $\omega_2,\omega_3,...,\omega_{N+k+1}$.
Now if $N'$ is arbitrary, express it modulo $N+k$ as $N'=p(N+k)+q$, where $p$ is a nonnegative integer, while $0\leq q < N+k$. 
Then the corresponding ergodic sum can be written as
\begin{equation}
\begin{split}
\frac{1}{N'} \sum_{n=1}^{N'} f(\sigma^n \omega') & = 
\frac{1}{N'} \sum_{n=1}^{p(N+k)} f(\sigma^n \omega') +
\frac{1}{N'} \sum_{n=1}^{q} f(\sigma^{p(N+k)+n} \omega') \\ & =
\frac{p(N+k)}{N'}\left(\frac{1}{p(N+k)}\sum_{n=1}^{p(N+k)} f(\sigma^n \omega')\right)+\frac{1}{N'} \sum_{n=1}^{q} f(\sigma^{p(N+k)+n} \omega')  =\circledast
\end{split}
\end{equation}
Using  the periodicity of $\sigma \omega'$ in the first sum, and the boundedness of $f$ in the second one we infer
$$ \circledast=
\frac{p(N+k)}{N'}\left(\frac{1}{N+k}\sum_{n=1}^{N+k} f(\sigma^n \omega')\right)+o(N').$$
Hence if $N'\to\infty$, the ergodic sum $\frac{1}{N'} \sum_{n=1}^{N'} f(\sigma^n \omega')$ converges to $\frac{1}{N+k}\sum_{n=1}^{N+k} f(\sigma^n \omega')$. 
Now by (\ref{contradict}) and $f\in\PCC^k(\Omega)$, we have \eqref{*contrav}. Thus by (\ref{avgestimate}), we deduce
\begin{equation}\label{*contrav2}
\frac{1}{N+k}\sum_{n=1}^{N+k} f(\sigma^n \omega')>\frac{-k\norm{f}+N(\alpha_{f, \max}^* +\varepsilon)}{N+k}>\alpha_{f, \max}^*+\frac{\varepsilon}{2},
\end{equation}
Hence $E_f(\alpha)\neq\emptyset$ for some $\alpha>\alpha_{f, \max}^*+\frac{\varepsilon}{2}$, which is obviously a contradiction. 
It concludes the proof. \end{proof}


\begin{remark}
	More general version of Lemma \ref{uniformity} can be found in \cite[Theorem 1.9]{SS2000}. In particular, the result would hold for continuous functions, rather than $\PCC$ functions. We will not, however, require such general result in our subsequent argument.
\end{remark}

Next, we will show that if $f \in PCC(\Omega)$, then there exists a periodic point in $\Omega$ for which the limit of the Birkhoff averages of $f$ equals  $\alpha_{f, \max}^*$.

\begin{lemma} \label{periodicmax}
Let $f\in\PCC^k(\Omega)$. 
Then there exists a periodic configuration $\omega$ such that $\lim_{N\to\oo}\frac{1}{N}\sum_{n=1}^N f(\sigma^n \omega) = \alpha_{f,\max}^*$.
\end{lemma}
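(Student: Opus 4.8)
I want to produce a periodic configuration $\omega$ whose Birkhoff average equals $\alpha^*_{f,\max}$. The natural strategy is a compactness/limiting argument: first extract, for each $m$, a long finite block along which the Birkhoff average of $f$ is nearly $\alpha^*_{f,\max}$, then periodize that block (as in the proof of Lemma~\ref{uniformity}) to get a periodic point whose average is close to $\alpha^*_{f,\max}$, and finally pass to a limit of these periodic points. Since $f \in \PCC^k(\Omega)$ depends only on the first $k$ coordinates, the analysis reduces to combinatorics of finite words.

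\textbf{Step 1 (producing near-optimal blocks).} By definition of $\alpha^*_{f,\max}$ there is $\omega^{(0)}\in\Omega$ with $\lim_{N\to\infty}\frac1N\sum_{n=1}^N f(\sigma^n\omega^{(0)}) = \alpha^*_{f,\max}$; fix a sequence $N_m\to\infty$ and set $A_m$ to be the finite word $(\omega^{(0)})_2^{N_m+k+1}$ of length $N_m+k$, so that $\frac1{N_m}\sum_{n=1}^{N_m} f(\sigma^n\omega^{(0)})\to\alpha^*_{f,\max}$ and this sum depends only on $A_m$.

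\textbf{Step 2 (periodizing).} Let $\omega^{(m)} := (A_m)^\infty \in \Omega$ be the periodic point of period $N_m+k$ obtained by repeating $A_m$. Exactly as computed in the proof of Lemma~\ref{uniformity}, $\lim_{N'\to\infty}\frac1{N'}\sum_{n=1}^{N'} f(\sigma^n\omega^{(m)}) = \frac1{N_m+k}\sum_{n=1}^{N_m+k}f(\sigma^n\omega^{(m)})$, and using periodicity together with $\norm{f}<\infty$ this equals $\frac1{N_m+k}\big(\sum_{n=1}^{N_m} f(\sigma^n\omega^{(0)}) + O(k\norm{f})\big) = \frac{N_m}{N_m+k}\cdot\frac1{N_m}\sum_{n=1}^{N_m}f(\sigma^n\omega^{(0)}) + O\big(\tfrac{k\norm{f}}{N_m}\big)$, which tends to $\alpha^*_{f,\max}$ as $m\to\infty$. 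So each $\omega^{(m)}$ is periodic with limit of Birkhoff averages equal to some $\beta_m$, and $\beta_m\to\alpha^*_{f,\max}$.

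\textbf{Step 3 (extracting a single periodic point).} The issue is that the periods $N_m+k$ grow, so I cannot just take a convergent subsequence of the $\omega^{(m)}$ and expect a periodic limit. Instead I would invoke the variational picture: each periodic $\omega^{(m)}$ carries its normalized orbit measure $\mu_m\in\mathcal M_\sigma$, with $\int f\,d\mu_m = \beta_m \to \alpha^*_{f,\max}$; by weak-$*$ compactness of $\mathcal M_\sigma$ pass to a subsequential limit $\mu_\infty$, which satisfies $\int f\,d\mu_\infty = \alpha^*_{f,\max}$, hence $\mu_\infty\in\mathcal F_f(\alpha^*_{f,\max})$ and in particular $\mathcal F_f(\alpha^*_{f,\max})\neq\emptyset$. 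Now the decisive point, and the main obstacle, is to upgrade from an invariant measure to an actual \emph{periodic} point realizing the maximum. Here is where I expect to use that $f\in\PCC^k$ crucially: the Birkhoff average $\frac1N\sum_{n=1}^N f(\sigma^n\omega)$ depends only on the frequencies of the $2^k$ cylinders of length $k$ in $\omega|N$, these frequencies lie in a rational polytope, and $\alpha^*_{f,\max}$ is the maximum of the linear functional $\mu\mapsto\int f\,d\mu$ over the shift-invariant measures, which by the above is attained. One then argues that the maximum of this linear functional over the (compact, finite-dimensional) set of "$k$-cylinder frequency vectors" realizable by shift-invariant measures is attained at a rational vertex, and a rational frequency vector on cylinders of length $k$ is realized by a periodic point: concretely, a vertex of the relevant polytope corresponds to an extreme invariant measure supported on a single periodic orbit (a "simple cycle" in the de Bruijn graph on words of length $k-1$). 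Taking $\omega$ to be a point on that periodic orbit, $\lim_{N\to\infty}\frac1N\sum_{n=1}^N f(\sigma^n\omega) = \int f\,d\mu_\infty = \alpha^*_{f,\max}$, which is what we wanted. (Lemma~\ref{uniformity} guarantees the reverse inequality $\le\alpha^*_{f,\max}$ for free, so it suffices to find $\omega$ with average $\ge\alpha^*_{f,\max}-\varepsilon$ for all $\varepsilon$, then the vertex argument closes the gap.)

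\textbf{Remark on the obstacle.} The genuinely nontrivial step is Step~3: converting "the linear functional attains its max over invariant measures" into "some periodic orbit attains it." The clean way is the de Bruijn graph / cycle-decomposition argument: an invariant measure for a $\PCC^k$ potential is determined by a flow on the de Bruijn graph $G_{k-1}$ (vertices = words of length $k-1$, edges = words of length $k$), $\int f\,d\mu$ is linear in the edge-weights, the set of such normalized flows is a polytope whose vertices are (normalized) indicators of simple directed cycles, and each simple cycle is exactly a periodic point. Maximizing a linear functional over a polytope is attained at a vertex; that vertex gives the desired periodic $\omega$. I would present this last part carefully since it is the heart of the lemma.
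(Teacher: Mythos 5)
Your proposal is correct, and it arrives at the same combinatorial heart as the paper's proof --- the de~Bruijn graph on $k$-cylinders and its simple directed cycles --- but by a genuinely different route. The paper stays entirely at the level of a single orbit: it takes one $\omega^{(0)}$ whose Birkhoff averages tend to $\alpha^*_{f,\max}$, reads it as an infinite walk in the graph, splits the walk at returns to a recurrent vertex into closed walks, decomposes each closed walk into cycles (with multiplicity), and observes that each partial Birkhoff average is then a \emph{convex combination} of cycle averages, hence $\alpha^*_{f,\max}\le\max_{C}\frac1{|C|}\sum_{e\in C}f(e)$; periodizing the optimal cycle finishes it. No invariant measures or weak-$*$ topology are needed. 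You instead pass to the ergodic-optimization picture: weak-$*$ compactness of $\mathcal{M}_\sigma$ to show the linear functional $\mu\mapsto\int f\,d\mu$ attains $\alpha^*_{f,\max}$ on some invariant measure, and then a linear-programming argument over the circulation polytope of $k$-cylinder frequencies, whose vertices are normalized simple cycles (i.e.\ periodic orbits). This is sound, and you correctly flag the polytope-vertex step as the part that needs careful presentation; it is a standard fact about flow/circulation polytopes, but it should be spelled out if one is not citing it. Two minor observations: (i) your Steps~1--2 are a detour you can skip --- taking weak-$*$ limits of the empirical measures $\frac1N\sum_{n=1}^N\delta_{\sigma^n\omega^{(0)}}$ directly gives $\mu_\infty\in\mathcal{M}_\sigma$ with $\int f\,d\mu_\infty=\alpha^*_{f,\max}$, since $f$ is continuous; there is no need to first periodize finite blocks. (ii) The sentence ``a rational frequency vector on cylinders of length $k$ is realized by a periodic point'' is slightly misleading as phrased (an arbitrary rational point of the polytope is realized by a periodic orbit only after an Eulerian-type closed-walk decomposition, not a simple cycle); what you actually use and need is only that \emph{vertices} of the polytope are simple cycles, which is correct. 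The net comparison: the paper's route is more elementary and self-contained; yours is conceptually clean and makes the link to ergodic optimization and the simplex of invariant measures explicit, at the cost of importing weak-$*$ compactness and a polytope fact.
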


\begin{proof}
We define a directed graph $G=(V,E)$ as follows: $V=\{0,1\}^k$, and there is an edge from $u\in V$ to $v\in V$ if roughly speaking $v$ is one of the possible shifted images of $u$, that is $v_i=u_{i+1}$ for $i=1,...,k-1$. 
Now we can think of the values of $f$ as weights on the vertices of $G$, while an arbitrary $\omega\in\Omega$ corresponds to an infinite walk $\Gamma_{\omega}$ in $G$. 
Moreover, the ergodic averages are simply the averages of weights along the vertices of finite subwalks of $\Gamma_\omega$.

For technical reasons, it is advantageous to put the weights on the edges and work with those ones: one of the convenient ways to do so is putting weight $f(u)$ on all the edges \textit{leaving} the vertex $u$. 
Denote the function $E\to\mathbb{R}$ obtained this way by $f$, too. 
Now the ergodic averages can be considered as the averages of weights along the edges of finite subwalks of $\omega$.

Consider now $\omega\in\Omega$ such that $\frac{1}{N}\sum_{i=1}^{N}f(\sigma^i \omega)\to\alpha_{f,\max}^{*}$.
Take the corresponding path $\Gamma_{\omega}$. 
As $V$ is finite, there exists a vertex which appears infinitely many times in $\Gamma_{\omega}$. 
By erasing the first few entries of $\omega$, or equivalently, erasing the first few edges of $\Gamma_{\omega}$, we might assume by abuse of notation that the first vertex $v$ of $\Gamma_{\omega}$ recurs infinitely many times. 
Now based on the recurrences of $v$, we can partition the infinite walk $\Gamma_{\omega}$ into closed, finite walks $\Gamma_{\omega}^{(1)},\Gamma_{\omega}^{(2)}, ...$ such that each such walk starts and ends with $v$, and in the meantime it does not hit $v$. 
Now it is simple to verify that the edge set (counted with multiplicities from now on) of each $\Gamma_{\omega}^{(i)}$ is the union of graph cycles, or in other words, it is the union of closed walks containing each of their edges precisely once. (One cycle might also appear multiple times in this decomposition.) Indeed, we can find a subpath $e_1e_2...e_r$ such that $e_1=e_r$, and there is no other repetition of edges in this subpath. 
Then $e_1e_2...e_{r-1}$ is a cycle, and its removal from $\Gamma_{\omega}^{(i)}$ results in a shorter closed walk starting and ending with $v$. Thus we can repeat the previous reasoning to find another cycle, if such exists and this procedure ends in finitely many steps.

Let us note now that there are only finitely many cycles in $G$ as it is a finite graph. 
Denote their set by $\mathcal{C}$. 
By the previous paragraph, up to the last edge of any $\Gamma_{\omega}^{(i)}$, the edge set of $\Gamma_{\omega}$ can be written as the union of these cycles, such that $C\in\mathcal{C}$ is used $\rho_{C,i}$ times. 
Thus the ergodic average corresponding to the subpath of the $\Gamma_{\omega}$ up to the last edge of $\Gamma_{\omega}^{(i)}$ is the following:
\begin{equation}
\frac{\sum_{C\in\mathcal{C}}\rho_{C,i}\sum_{e\in C} {f(e)}}{\sum_{C\in\mathcal{C}}\rho_{C,i}|C|}=\frac{\sum_{C\in\mathcal{C}}\rho_{C,i}|C|\sum_{e\in C} \frac{f(e)}{|C|}}{\sum_{C\in\mathcal{C}}\rho_{C,i}|C|}.
\label{pathaverage}
\end{equation}
Notice that it is simply a convex combination of the cycle averages $\sum_{e\in C} \frac{f(e)}{|C|}$. 
Hence the ergodic average in (\ref{pathaverage}) can be bounded from above by $\max_{C\in\mathcal{C}}\sum_{e\in C} \frac{f(e)}{|C|}$. 
Now by the choice of $\omega$ we also know that this ergodic average tends to $\alpha_{f, \max}^*$ as $i\to\infty$, hence
\begin{equation}
\alpha_{f,\max}^{*}\leq\max_{C\in\mathcal{C}}\sum_{e\in C} \frac{f(e)}{|C|}
\label{pathaverageestimate}
\end{equation}
also holds.

Now consider the infinite walk which goes along a cycle $C_0$ over and over again, where $C_0$ is chosen so that the above maximum is attained. 
Then $C_0$ together with a starting point uniquely determines a periodic configuration $\omega^*\in\Omega$ for which $\sigma^i\omega^*$ always equals the respective vertex of $C_0$. 
Moreover, it is simple to check that the ergodic averages tend to $\sum_{e\in C_0} \frac{f(e)}{|C_0|}$. 
Hence this limit must be $\alpha_{f,\max}^*$ by (\ref{pathaverageestimate}), as it is an upper estimate for all ergodic limits.
\end{proof}

\section{Continuity, discontinuity and support of the  spectrum}\label{*secgen}

By \cite{Fan_Feng_Wu_Recurrence}, we know that $S_f$ is necessarily upper semi-continuous for any continuous function. Moreover, it is continuous on $[\alpha_{f,\min}^*,\alpha_{f,\max}^*]$, while it vanishes outside of this interval. 
However it is not necessarily continuous at the endpoints of this interval.

\subsection{Denseness of $\PCC$ functions with discontinuous spectra}\label{SS-DDCS}
Recall an example of a $\PCC^3(\Omega)$ function with discontinuous spectrum from Example $\ref{*propdisc}$. 
In this section, we will show that functions in $\PCC(\Omega)$ with discontinuous spectrum form a dense subset of $C(\Omega)$.

\begin{theorem}\label{*ppropdisc}
Functions $h\in \PCC(\OOO)$ with $S_{h}(\aaa_{h,\max}^{*})>0$
are dense in $C(\OOO)$.
\end{theorem}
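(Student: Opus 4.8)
**Proof proposal for Theorem \ref{*ppropdisc}.**

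The plan is to approximate an arbitrary $f \in C(\OOO)$ in the supremum norm by a $\PCC$ function with a discontinuous spectrum. First I would fix $f \in C(\OOO)$ and $\eta > 0$; by uniform continuity of $f$ on the compact space $\OOO$, choose $k$ large enough that $f$ varies by less than $\eta/2$ on each $k$-cylinder, and let $f_0 \in \PCC^k(\OOO)$ be the $\PCC$ function obtained by setting $f_0$ equal to (say) the value of $f$ at some chosen point of each $k$-cylinder, so that $\norm{f - f_0} < \eta/2$. It now suffices to perturb $f_0$ within $\PCC$ by less than $\eta/2$ to obtain a function $h$ with $S_h(\aaa_{h,\max}^*) > 0$. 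The idea is to borrow the mechanism of Example \ref{*propdisc}: there a positive-dimensional self-similar subset $B$ of $\OOO$ sits inside $E_f(\aaa_{f,\max}^*)$ because on $B$ the Birkhoff averages are pinned at the extreme value while every other point has strictly smaller limsup. I want to graft such a ``rigid extremal block'' onto $f_0$ using coordinates far beyond the first $k$.

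The key steps, in order. First, by Lemma \ref{periodicmax} there is a periodic $\omega^{(\max)}$ with $\lim_N \frac1N \sum_{n=1}^N f_0(\sigma^n \omega^{(\max)}) = \aaa_{f_0,\max}^*$; let $P$ be its period and let $W$ be the length-$P$ word it repeats. Second, on a block of new coordinates (indices between, say, $k+1$ and $k+M$ for suitable $M$) I introduce a small $\PCC$ modification that rewards a very restrictive pattern: concretely, I replace $f_0$ by $h = f_0 + \eta' g$, where $g \in \PCC^{k+M}(\OOO)$ is a gadget of sup-norm $1$ designed so that (i) for configurations that eventually look like the periodic extremal configuration of $f_0$ interleaved with a free binary choice in a sparse set of coordinates, the Birkhoff average of $h$ still converges to $\aaa_{h,\max}^*$, and (ii) the set of such configurations is a self-similar Cantor set of positive Hausdorff dimension, as in Example \ref{*propdisc}. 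The cleanest implementation is to take, over each window of length $P + 1$, the word $W$ followed by one ``free'' symbol, and to choose $g$ so that the only way to stay at the maximal average of $h$ is to repeat $W$ in the forced positions; the free symbols then range over $\{0,1\}^\NN$, giving a copy of a set of dimension $\frac{\log 2}{(P+1)\log 2} = \frac{1}{P+1} > 0$ inside $E_h(\aaa_{h,\max}^*)$, exactly as the $\frac13$ computation in Example \ref{*propdisc}. Third, I must verify that $\aaa_{h,\max}^*$ is genuinely attained only on this block-structured set and not on a full-measure-type set, i.e. that $S_h$ really is discontinuous at the top endpoint rather than $S_h(\aaa_{h,\max}^*)$ being forced; the pairing/averaging argument of Lemma \ref{periodicmax} and the reasoning in Example \ref{*propdisc} (summands at the non-maximal values can be charged against nearby summands) shows that any deviation from the forced pattern strictly lowers the limit, so the maximal-average set is precisely the self-similar set and has dimension $\frac{1}{P+1}$, which is strictly between $0$ and $1$; in particular $S_h(\aaa_{h,\max}^*) = \frac{1}{P+1} > 0$, so $h$ has discontinuous spectrum. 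Finally, choosing $\eta'$ small (depending on $\eta$, on $\norm{f_0}$, and on the ``gap'' produced by the gadget) gives $\norm{h - f_0} < \eta/2$, hence $\norm{h - f} < \eta$, and $h \in \PCC^{k+M}(\OOO) \subset \PCC(\OOO)$.

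The main obstacle I expect is the verification in the third step: designing the $\PCC$ gadget $g$ so that the strict-maximum property holds with a uniform gap (so that no Birkhoff-average trick can push a configuration violating the forced pattern up to $\aaa_{h,\max}^*$ in the limit), while simultaneously keeping the extremal set genuinely self-similar with an explicitly computable positive dimension. This is precisely the delicate combinatorial point that made Example \ref{*propdisc} nontrivial, and here it has to be carried out starting from an arbitrary periodic extremal word $W$ of $f_0$ rather than from a hand-picked three-cylinder function; the cycle-average analysis from the proof of Lemma \ref{periodicmax} (averages along finite subwalks are convex combinations of cycle averages, bounded by the maximal cycle average) is the right tool to pin down which configurations achieve $\aaa_{h,\max}^*$ and to see that they form the desired self-similar set. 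The rest — uniform continuity to get $f_0$, and the final triangle-inequality bookkeeping on $\eta$ — is routine.
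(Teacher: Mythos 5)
Your high-level plan matches the paper's: approximate $f$ by a $\PCC$ function, take a periodic extremal configuration via Lemma~\ref{periodicmax}, graft on a self-similar set of positive dimension, and reward it with a small $\PCC$ perturbation. But the ``cleanest implementation'' you single out --- repeating the extremal word $W$ of period $P$ followed by one free symbol, period $P+1$ --- has a structural defect that the paper's construction is carefully engineered to avoid. The problem is that a free symbol disagreeing with the periodic continuation of $W$ disturbs up to $k$ consecutive $k$-cylinders, so configurations in your self-similar set do \emph{not} all have the same $f_0$-Birkhoff average; the ones with ``wrong'' free symbols sit at a strictly lower level and therefore are not in a single level set, let alone in $E_h(\aaa_{h,\max}^*)$. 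The paper's words $X=A^{2\ell+2}BA^2$ and $Y=A^{2\ell+1}BA^3$ are constructed precisely so that they visit the same multiset of $\mathbf{k}$-cylinders (same counts of $AA$, $AB$, $BA$ transitions), which is what makes the Claim ($\frac1N\sum f(\sigma^n\omega)\to\aaa_\ell$ uniformly on $\mathbf H$) true; your free-symbol gadget lacks this invariance.

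Moreover, even granting the invariance, there is a quantitative obstruction you have no parameter to control: since the structured set is not just the single periodic orbit, its common $f_0$-average must lie strictly below $\aaa_{f_0,\max}^*$, and the reward $\eta' g$ must lift it back above; but the window length $P+1$ is fixed by $f_0$, so the deficit per coordinate is a fixed positive quantity, while $\eta'$ must be small. The paper introduces the free parameter $\ell$ so that the window has length $(2\ell+5)k_A$ with the defect $B$ confined to $5k_A$ coordinates; the deficit then goes to zero as $\ell\to\infty$ while the reward density $\ell/((2\ell+5)k_A)$ stays bounded away from zero, making the inequality $b^*>\aaa_{f,\max}^*$ possible with a perturbation of size $\eee/4$ --- see \eqref{*SP6*b}--\eqref{*SP6*a}. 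Finally, you correctly flag the hardest step --- showing that $\aaa_{h,\max}^*$ is not exceeded elsewhere --- but leave it unresolved; the paper's argument here is a good-part/bad-part decomposition of an arbitrary orbit combined with the uniform upper bound of Lemma~\ref{uniformity}, rather than the cycle-average analysis you propose, and it is where most of the work lies. You would need to add the $\ell$-style dilution and the $X/Y$-invariance to your gadget, and then supply this final verification, before the proposal becomes a proof.
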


\begin{remark}\label{*rempropdisc}
Of course, a similar theorem is valid with $S_{h}(\aaa_{h,\min}^{*})>0$ in the conclusion and with a little extra technical effort one can show density in $C(\OOO)$ of those $f\in \PCC(\OOO)$ for which 
$S_{h}(\aaa_{h,\max}^{*})>0$ and $S_{h}(\aaa_{h,\min}^{*})>0$ hold simultaneously.
As Theorem \ref{contspectrum}
shows functions satisfying the conclusion of Theorem \ref{*ppropdisc}, or any of its above mentioned variants form a first category set in $C(\OOO)$.
\end{remark}

The main idea of the proof of Theorem \ref{*ppropdisc} is to show that given any continuous function, we can 
approximate it by a PCC function, and we further "perturb" that PCC function in an appropriate way so that its spectrum will be discontinuous.

\begin{proof}[Proof of Theorem $\ref{*ppropdisc}$]
Suppose $\eee>0$ and $f_{0}\in    C(\OOO)$ are arbitrary.
We need to find an $h\in \PCC(\OOO)$
 such that 
 \begin{equation}\label{*SP1*a}
 \|f_{0}-h\|<\eee \text{ and } S_{h}(\aaa^{*}_{h,\max})>0.
 \end{equation}
We will achieve this by the following. We first find $f \in \PCC^{\mathbf{k}}(\Omega)$ for suitably large $\mathbf{k}$ that approximates $f_0$. By Remark \ref{*remsup}, $\aaa^*_{f_0, \max} \approx \aaa^*_{f, \max}.$ Next we will "perturb" the function $f$ by adding another $\PCC$ function $g$. This function $g$ will be small in a sense that it does nothing to perturb $f$ for many points, but it will perturb just slightly near the points where the limit of Birkhoff averages of $f$ attains the maximum (i.e. $\alpha_{f,\max}^*$) to the point where $S_{f+g}$ discontinuous at the boundary of $L_{f+g}$. The sum $f+g$ will be our candidate for $h$.

By using a suitably large $\mathbf{k}$ choose $f\in \PCC^{\mathbf{k}}(\OOO)$
 such that $\|f-f_{0}\|<\eee/2$.
By Lemma \ref{periodicmax} select a periodic $\ooo'$  such that 
\begin{equation}\label{*SP2*a}
\lim_{N\to\oo}\frac{1}{N}\sum_{n=0}^{N-1}f(\sss^{n}\ooo')=\aaa^{*}_{f,\max}.
\end{equation} 
In this proof, as in \eqref{*SP2*a} we prefer to take Birkhoff sums with indices between $0$ and $N-1$, when taking limits it makes no difference.
We can assume that there is a finite string of $0$s and $1$s, denoted by $A$
 such that  $\ooo'=A^{\oo}$, by not necessarily using the prime period we can also suppose that $k_{A}=|A|$, the length of $A$ is a multiple of $\mathbf{k}$.

 Now we select a string $B$ of length $k_{A}$. 
If $A\not=0^{   k_{A}}$ then we let $B=0^{   k_{A}}$, if $A=0^{   k_{A}}$ then we let $B=1^{   k_{A}}$. 
Without limiting generality in the sequel we assume that $B=0^{   k_{A}}$.
 
 By using a suitably large number $\ell $, to be fixed later,
 we consider strings $X=(A^{2\ell })AABAA$ and
 $Y=(A^{2\ell })ABAAA$.
 
 Set $\mathbf{H}=\{X,Y  \}^{\oo}$. We will later show that this set will be contained in $E_h(\alpha_{h,\max}^*)$ (where $h$ will be defined by perturbing $f$ slightly). We note that it is easy to see that $\dim_H \mathbf{H} > 0$, since by Hutchinson's theorem,
 $2\cdot (2^{-(2\ell +5)   k_{A}})^{\dim_{H}\mathbf{H}}=1$, which gives
 $\dim_{H}\mathbf{H}=1/((2\ell +5)   k_{A})$. This would imply $E_h(\alpha_{h,\max}^*) > 0$.
 
 \begin{claim}
There exists $\alpha_\ell \in \RR$ such that
 \begin{equation}\label{*SP3*a}
 \lim_{N\to\oo} \frac{1}{N}\sum_{n=0}^{N-1} f(\sss^{n}\ooo)=\aaa_{\ell}\leq \aaa^{*}_{f,\max} \text{ for any }\ooo\in\mathbf{H}.
 \end{equation}
 \end{claim}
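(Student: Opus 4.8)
The plan is to show that every $\omega\in\mathbf{H}=\{X,Y\}^\infty$ has a well-defined Birkhoff average for $f$, with a common value $\alpha_\ell$ not exceeding $\alpha^*_{f,\max}$. The upper bound is immediate from the definition of $\alpha^*_{f,\max}$ once we know the limit exists, so the real content is the existence of the limit. The key structural observation is that $f\in\PCC^{\mathbf{k}}(\Omega)$, that $k_A=|A|$ is a multiple of $\mathbf{k}$, and that both blocks $X=(A^{2\ell})AABAA$ and $Y=(A^{2\ell})ABAAA$ have the same length $(2\ell+5)k_A$ and differ from the ``pure'' block $A^{2\ell+5}$ only by changing one copy of $A$ to $B=0^{k_A}$ at position $2\ell+2$ or $2\ell+3$. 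Since $A$-aligned blocks of length $k_A$ (a multiple of $\mathbf{k}$) determine the values of $f$ on all shifts landing inside them up to boundary effects of size $\mathbf{k}$, the contribution of each length-$(2\ell+5)k_A$ block $X$ or $Y$ to a Birkhoff sum over it is the same fixed number, call it $\Sigma$, up to a bounded error $O(\mathbf{k}\,\|f\|)$ coming from the $\mathbf{k}-1$ shifts that straddle a block boundary.

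First I would make this precise: fix $\omega = W_1 W_2 W_3\cdots$ with each $W_j\in\{X,Y\}$, and for $N$ a multiple of the block length $L:=(2\ell+5)k_A$, write $N=mL$ and decompose
\[
\sum_{n=0}^{N-1} f(\sigma^n\omega) = \sum_{j=1}^{m}\Big(\sum_{n\in I_j} f(\sigma^n\omega)\Big),
\]
where $I_j$ indexes the $j$-th block. For the shifts $n$ well inside block $j$ (i.e. at distance $\geq \mathbf{k}$ from both ends of $I_j$), the value $f(\sigma^n\omega)$ depends only on the $\mathbf{k}$-cylinder sitting inside $W_j$ at that position, which in turn — because $W_j$ is a concatenation of $A$'s and one $B$, all of length a multiple of $\mathbf{k}$ — is one of finitely many $\mathbf{k}$-strings determined by $W_j$ alone; in particular $\sum_{n\in I_j, \text{interior}} f(\sigma^n\omega)$ equals a constant depending only on whether $W_j=X$ or $W_j=Y$. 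I would then check that this interior constant is in fact the \emph{same} for $X$ and $Y$: both $X$ and $Y$ consist of $2\ell+4$ copies of $A$ and one copy of $B$, and the position of the single $B$ among the $A$'s differs only by one slot (position $2\ell+2$ versus $2\ell+3$), so the multiset of $\mathbf{k}$-windows strictly interior to the block is identical in the two cases (a cyclic-type shift of the $B$-block by one full $A$-period does not change which interior windows occur, only possibly windows straddling block boundaries). Hence each block contributes $\Sigma + O(\mathbf{k}\|f\|)$ to the sum, with $\Sigma$ independent of $j$, and therefore $\frac{1}{mL}\sum_{n=0}^{mL-1} f(\sigma^n\omega)\to \Sigma/L =: \alpha_\ell$ as $m\to\infty$. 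For general $N$, write $N=mL+r$ with $0\le r<L$ and absorb the last partial block into an $O(L\|f\|/N)=o(1)$ error; this gives the full limit and it is visibly independent of the sequence $(W_j)$.

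Finally, once the limit $\alpha_\ell$ is shown to exist and be the same for every $\omega\in\mathbf{H}$, the inequality $\alpha_\ell\le\alpha^*_{f,\max}$ follows directly: since $E_f(\alpha_\ell)\supseteq\mathbf{H}\neq\emptyset$, by definition $\alpha_\ell\le\sup\{\alpha: E_f(\alpha)\neq\emptyset\}=\alpha^*_{f,\max}$. Alternatively one can invoke Lemma \ref{uniformity}: for any $\varepsilon>0$ all long Birkhoff sums of $f\in\PCC^{\mathbf{k}}(\Omega)$ are at most $\alpha^*_{f,\max}+\varepsilon$ uniformly, so the limit cannot exceed $\alpha^*_{f,\max}$. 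I expect the main obstacle to be the bookkeeping in the claim that $X$ and $Y$ produce the \emph{same} interior contribution $\Sigma$ — one has to argue carefully that moving the $B$-block by exactly one $A$-period leaves the collection of interior $\mathbf{k}$-windows unchanged, using that $|A|$ is a multiple of $\mathbf{k}$ and that the blocks $A^{2\ell}$ on either side are long enough (this is where the freedom to choose $\ell$ large is used) so that no window ``sees'' past the $A^{2\ell}$ padding into a neighboring $X$ or $Y$. Everything else is the standard periodic-approximation estimate already used in the proof of Lemma \ref{uniformity}.
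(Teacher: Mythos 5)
Your overall strategy matches the paper's: decompose the Birkhoff sum into consecutive blocks of length $L=(2\ell+5)k_A$ corresponding to the factors $W_j\in\{X,Y\}$, argue that each block contributes the same amount $\Sigma_\ell$, and then handle general $N$ by discarding the last partial block. However, as written there is a genuine gap in the middle step. You conclude that each block contributes $\Sigma + O(\mathbf{k}\|f\|)$, ``and therefore $\frac{1}{mL}\sum_{n=0}^{mL-1}f(\sigma^n\omega)\to\Sigma/L$.'' This does not follow: a per-block error of size $O(\mathbf{k}\|f\|)$, averaged over $m$ blocks of length $L$, produces an error of size $O(\mathbf{k}\|f\|/L)$ in the Cesàro average, which is a \emph{constant} in $m$ and does not vanish. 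With that bound alone you have not even shown the limit exists, let alone identified it.

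The fix, which is what the paper does, is to observe that the per-block contribution is \emph{exactly} the same with no boundary error at all. Two facts make this work: $\mathbf{k}$ divides $k_A$, so every window $\omega_{n+1}\cdots\omega_{n+\mathbf{k}}$ with $n$ in a given $k_A$-slot lies inside a consecutive pair of $k_A$-aligned slots; and every factor $W_{j+1}\in\{X,Y\}$ begins with $A$, so the pair that any window near the right end of $W_j$ sees is determined by $W_jA$ alone. For both $X A$ and $Y A$ the sequence of $k_A$-slot pairs is $2\ell+3$ copies of $(A,A)$ plus one $(A,B)$ and one $(B,A)$ in succession; hence the paper's identity
\[
\sum_{n\in I_j} f(\sigma^n\omega)=(2\ell+3)\sum_{n=0}^{k_A-1}f(\sigma^n\omega^A)+\sum_{n=0}^{2k_A-1}f(\sigma^n\omega^{AB}),
\]
with $\omega^A\in[AA]$, $\omega^{AB}\in[ABA]$, holds exactly for every $j$ and every $\omega\in\mathbf{H}$. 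Once you have exact equality, your periodic-averaging argument and the appeal to Lemma \ref{uniformity} for $\alpha_\ell\le\alpha^*_{f,\max}$ both go through. A secondary point: the size of $\ell$ plays no role in this claim. The $A^{2\ell}$ padding is not what protects the boundary windows; it is the fact that each $W$ both ends and begins with $A$ and that $\mathbf{k}\mid k_A$. The choice of $\ell$ large only matters later, in the proof of Claim \ref{*hfg}.
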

 
 \begin{proof}
 First we consider the sum $\sum_{n=0}^{2(\ell+5)k_A-1} f(\sss^{n}\ooo)$ for any $\omega \in \mathbf{H}$. We select $\omega^A \in [AA]$, $\omega^{AB} \in [ABA]$. Since $f \in \PCC^{\mathbf{k}}$, and $\mathbf{k}$ divides $k_A$, 
 the values of $f(\sigma^{n}\omega^A);\ n=0,...,k_{A}-1$ and  $f(\sigma^{n}\omega^{AB});\ n=0,...,2k_{A}-1$ are independent of our choice of  
 $\omega^A \in [AA]$ and $\omega^{AB} \in [ABA]$.
  Hence, there exists a constant $\Sigma_\ell$ such that for any $\omega \in \mathbf{H}$, we have
 $$\Sigma_\ell = \sum_{n=0}^{(2\ell+5)k_A-1} f(\sigma^n \omega) 
 = (2\ell+3) \sum_{n=0}^{k_A - 1} f(\sigma^n\omega^A) +   \sum_{n=0}^{2k_A-1} f(\sigma^n \omega^{AB}).
 $$
Define $\alpha_\ell$ so that it satisfies $2(\ell + 5)k_A \aaa_\ell = \Sigma_{\ell}$. 
Let $N$ be greater than $2(\ell+5)k_A$, and write $N = 2(\ell+5)k_AM_N + R_N$ for some positive integer $M_N$ and $R_N \in \{0, 1, \ldots, 2(\ell+5)k_A-1\}$. Thus
 \[ \sum_{n=0}^{N-1} f(\sigma^n \omega) = 2(\ell+5)k_AM_N\alpha_\ell + \sum_{n={N-R_N}}^N f(\sigma^n\omega) \]
 for any $\omega \in \mathbf{H}$. Thus, we obtain (\ref{*SP3*a}) by dividing both sides of the equation by $N$ and letting $N \to \infty$.
\end{proof}
 
 Now we return to the proof of Theorem \ref{*ppropdisc}.
 Next we construct the perturbation function $g$. Put $m=\ell +7$ and
\begin{equation}\label{*SP3*b}
C_{m}=\{U_{1}U_{2}...U_{m}\ooo_{0}\ooo_{1}...: U_{i}\in \{ X,Y \},\  i=1,...,m,\  \ooo_{j}\in \{ 0,1 \},\  j=0,1,...  \}.
\end{equation} 
We take the following finite union of cylinder sets in $\OOO$
$$\mathbf{P} =\cup_{i=0}^{\ell -1}\sss^{i   k_{A}}C_{m}.$$ 

Next we define our perturbation function $g\in \PCC^{m   k_{A}}(\OOO).$
If $\ooo\in \mathbf{P} $ then we set $g(\ooo)=\eee/4$, otherwise put $g(\ooo)=0$.

\begin{claim}\label{*hfg}
If $\ell $ is sufficiently large then for
$h=f+g$ and for any $\ooo\in\mathbf{H}$ we have 
\begin{equation}\label{*SP7*a} b^* :=
\frac{1}{(2\ell + 5)k_{A} }\sum_{j=0}^{(2\ell + 5)k_{A} -1}h(\sss^{j+t(2\ell + 5)k_{A} }\ooo) >
\aaa^{*}_{f,\max}+\frac{\eee}{32   k_{A}} \text{ for }t=0,1,...\, .
\end{equation}
\end{claim}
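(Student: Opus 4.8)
The plan is to reduce to $t=0$, split $b^{*}$ into the contribution of $f$ and that of $g$, bound each from below, and let $\ell\to\oo$. Since $\ooo\in\mathbf{H}=\{X,Y\}^{\oo}$ forces $\sss^{t(2\ell+5)k_{A}}\ooo=V_{t}V_{t+1}\cdots\in\mathbf{H}$, replacing $\ooo$ by $\sss^{t(2\ell+5)k_{A}}\ooo$ reduces \eqref{*SP7*a} to the assertion $b^{*}=\frac{1}{(2\ell+5)k_{A}}\sum_{j=0}^{(2\ell+5)k_{A}-1}h(\sss^{j}\ooo)>\aaa^{*}_{f,\max}+\eee/(32k_{A})$ for an arbitrary $\ooo=V_{0}V_{1}\cdots$ with all $V_{i}\in\{X,Y\}$. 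For the $f$-part I would compare $\sum_{j=0}^{(2\ell+5)k_{A}-1}f(\sss^{j}\ooo)$ with the analogous sum along the periodic point $\ooo'=A^{\oo}$ of \eqref{*SP2*a} (its value is precisely the constant $\Sigma_{\ell}=(2\ell+5)k_{A}\aaa_{\ell}$ from the proof of \eqref{*SP3*a}, though this is not needed). Because $\sss^{k_{A}}\ooo'=\ooo'$, the sequence $j\mapsto f(\sss^{j}\ooo')$ is $k_{A}$-periodic with period-average $\aaa^{*}_{f,\max}$, so that comparison sum equals $(2\ell+5)k_{A}\aaa^{*}_{f,\max}$ exactly; and, relative to the coordinates on which these summands depend, $\ooo$ differs from $\ooo'$ only on the single $k_{A}$-sub-block of $V_{0}$ where $B$ sits. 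As $f\in\PCC^{\mathbf{k}}$ with $\mathbf{k}\mid k_{A}$, only fewer than $2k_{A}$ of the indices $j$ can have $f(\sss^{j}\ooo)\ne f(\sss^{j}\ooo')$, each discrepancy being at most $2\|f\|$; hence $\frac{1}{(2\ell+5)k_{A}}\sum_{j=0}^{(2\ell+5)k_{A}-1}f(\sss^{j}\ooo)\ge\aaa^{*}_{f,\max}-\frac{4\|f\|}{2\ell+5}$.

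For the $g$-part, which is nonnegative and takes the value $\eee/4$ exactly on $\mathbf{P}$, I would produce $\ell$ indices $j\in\{0,1,\dots,(2\ell+5)k_{A}-1\}$ with $\sss^{j}\ooo\in\mathbf{P}$, namely $j=ik_{A}$ for $i=0,1,\dots,\ell-1$ (distinct, and $ik_{A}<(2\ell+5)k_{A}$). Indeed, by \eqref{*SP3*b} the length-$m(2\ell+5)k_{A}$ prefix $V_{0}V_{1}\cdots V_{m-1}$ of $\ooo$ is a concatenation of $m$ words from $\{X,Y\}$, so $\ooo\in C_{m}$, whence $\sss^{ik_{A}}\ooo\in\sss^{ik_{A}}C_{m}\subseteq\mathbf{P}$ and $g(\sss^{ik_{A}}\ooo)=\eee/4$. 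Therefore $\sum_{j=0}^{(2\ell+5)k_{A}-1}g(\sss^{j}\ooo)\ge\ell\,\eee/4$.

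Adding the two lower bounds gives
\[ b^{*}\ \ge\ \aaa^{*}_{f,\max}-\frac{4\|f\|}{2\ell+5}+\frac{\ell\,\eee}{4(2\ell+5)k_{A}}. \]
Since $f$, $\mathbf{k}$, and the word $A$ (hence $k_{A}$ and $\|f\|$) were all fixed before $\ell$, as $\ell\to\oo$ the negative term tends to $0$ while the last term tends to $\eee/(8k_{A})>\eee/(32k_{A})$; thus $b^{*}>\aaa^{*}_{f,\max}+\eee/(32k_{A})$ for every sufficiently large $\ell$, and choosing such an $\ell$ (which also fixes $m=\ell+7$) proves Claim \ref{*hfg}. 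The only step needing care is this final balance: the gain $\ell\eee/4$ contributed by $g$ over one period must dominate the $O(k_{A}\|f\|)$ ``defect'' caused by the single $B$ inside each super-block, and this is precisely what is arranged by taking $\ell$ large at the end. The combinatorial input $\sss^{ik_{A}}\ooo\in\sss^{ik_{A}}C_{m}$ and the $f$-comparison with $\ooo'$ are routine once the strings $X$, $Y$ and the family $C_{m}$ have been defined as above.
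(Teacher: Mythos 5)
Your proof is correct and follows essentially the same strategy as the paper: split $h$ into $f+g$, show the $f$-average over one super-block is at least $\alpha^*_{f,\max}$ minus an $O(1/\ell)$ defect, show the $g$-average contributes $\ell\varepsilon/4$ per super-block, and then take $\ell$ large. The only cosmetic difference is how you bound the $f$-defect: you compare all $(2\ell+5)k_A$ summands with those along $\omega'=A^\infty$ and count the at-most $k_A+\mathbf{k}-1<2k_A$ indices whose window meets the single $B$-block, whereas the paper uses the exact equality on the $A^{2\ell}$-prefix plus a crude $\alpha_{f,\min}$ bound on the remaining $5k_A$ terms; both yield $\alpha^*_{f,\max}-O(1/\ell)$. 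One small remark: the ``$b^{*}:=$'' in the statement tacitly asserts that the displayed average is the same constant for every $\omega\in\mathbf{H}$ and every $t$, a fact the paper records in (4.7) and (4.10); your lower-bound argument gives the inequality for each $\omega,t$ but does not explicitly observe this independence (which is needed later, e.g.\ in (4.18)). That independence does follow quickly from the observation you already make — relative to the relevant window each of $X$ and $Y$ contains the same multiset of $k_A$-blocks — so it is worth stating, but it is not a gap in the proof of the inequality itself.
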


\begin{proof}
Take and fix an arbitrary $\ooo\in\mathbf{H}$. 
Recall that $|X|=|Y|=(2\ell +5)   k_{A}$.
By our definition of $X$ and $Y$ we have 
\begin{equation}\label{*SP4*a}
\frac{1}{(2\ell + 5)k_{A} }\sum_{j=0}^{(2\ell + 5)k_{A} -1}f(\sss^{j+t(2\ell + 5)k_{A} }\ooo)=\aaa_{\ell}
\text{ for any }t\in \{ 0,1,... \}.
\end{equation}

From the choice of $\ooo$ and $A$ it is also clear that
\begin{equation}\label{*SP5*a}
\frac{1}{2\ell    k_{A}}\sum_{j=0}^{2\ell    k_{A}-1}f(\sss^{j+t(2\ell + 5)k_{A} }\ooo)=\aaa^{*}_{f,\max}
\text{ for any }t\in \{ 0,1,... \}.
\end{equation}
Hence,
\begin{equation}\label{*SP5*b}
\aaa_{\ell}\geq \frac{2\ell    k_{A}\cdot \aaa^{*}_{f,\max}+5   k_{A}\aaa_{f,\min}}{(2\ell + 5)k_{A} }\to \aaa^{*}_{f,\max}
\text{ as }\ell \to\oo.
\end{equation}

Next we look at the averages of $g$. Observe that if $U_{i}\in \{ X,Y \}$ then there is a maximal substring of $U_{i}$
which consists of consecutive zeros. 
This is the one which contains $B$, and
of course might contain some zeros from the end/beginning of the $A$s before/after $B$
in $U_{i}$. 
This and the definition of $\mathbf{P}$ and $g$ imply that for $\ooo\in\mathbf{H}$
\begin{equation}\label{*SP5b*a}
g(\sss^{j}\ooo)>0\text{ holds iff }j=i   k_{A}+t(2\ell +5)   k_{A},\   i=0,...,\ell -1,\   
t=0,1,... \, . 
\end{equation}

Therefore,
\begin{equation}\label{*SP5*c}
\frac{1}{(2\ell + 5)k_{A} }\sum_{j=0}^{(2\ell + 5)k_{A} -1}g(\sss^{j+t(2\ell + 5)k_{A} }\ooo)=\frac{\ell \eee}{4(2\ell + 5)k_{A} }
\text{ for any }t\in \{ 0,1,... \}.
\end{equation}

Now we determine how large $\ell $ should be. Indeed, we select an $\ell $  such that 
\begin{equation}\label{*SP6*b}
\ell \cdot \frac{\eee}{8}> 5   k_{A} (\aaa^{*}_{f,\max}-\aaa_{f,\min}) \text{ and }
\frac{\ell }{8(2\ell +5)}>\frac{1}{32}.
\end{equation}
These inequalities will grant us that
\begin{align}\label{*SP6*a}
 \frac{2\ell    k_{A}\aaa^{*}_{f,\max}+5   k_{A}\aaa_{f,\min}+\ell \frac{\eee}{4}}{(2\ell + 5)k_{A} }
&> \frac{2\ell    k_{A}\aaa^{*}_{f,\max}+5   k_{A}\aaa^{*}_{f,\max}+\ell \frac{\eee}{8}}{(2\ell + 5)k_{A} } \\
&>\aaa^{*}_{f,\max}+\frac{\eee}{32   k_{A}}. \nonumber
\end{align}


From \eqref{*SP5*a}, \eqref{*SP5*b}, \eqref{*SP5*c} and \eqref{*SP6*a}, it follows that if
$h=f+g$ then for $\ooo\in\mathbf{H}$ we have \eqref{*SP7*a}.

\end{proof}

Now we return again to the proof of Theorem \ref{*ppropdisc}.
Claim \ref{*hfg} implies that $\mathbf{H}\sse E_{h}(b^{*})$ and hence $S_{h}(b^*)=
\dim_{H}E_{h}(b^*)>0$.

If we can verify that $b^*=\aaa^{*}_{h,\max}$ then we are done.
We need to show that if
\begin{equation}\label{*SP8*a}
\lim_{N\to\oo} \frac{1}{N}\sum_{n=0}^{N-1} h(\sss^{n}\ooo)=\aaa \text{ then }\aaa\leq b^*.
\end{equation}
Suppose that we have a fixed $\ooo\in \OOO$ for which the limit in \eqref{*SP8*a}
exists and equals $\aaa$.

Now we subdivide $\ooo$ into finitely or infinitely many substrings in the following way
$$\ooo=Z_{0}W_{1}Z_{1}W_{2}Z_{2}...$$
where $Z_{0}$ might be the empty string, the other strings are non-empty.
For any $j$ the strings $W_{j}\in \{ X,Y \}^{d_{j}}$, where $1\leq d_{j}\leq +\oo.$
The strings $Z_{j}$ do not contain any substring of the form $X$ or $Y$ and they can be finite, or infinite.
In case one of the $Z_{j}$s is infinite then  there exists $N_{1}$  such that for all
$n\geq N_{1}$, $g(\sss^{n}\ooo)=0$
and hence $\aaa\leq \aaa^{*}_{f,\max}<b^{*}$.

Hence from now on we can suppose that the $Z_{j}$s are finite.

If one of the $W_{j}$s is infinite then one can find $N_{1}$  such that 
$\sss^{N_{1}}\ooo\in \mathbf{H}$
and hence $\aaa=b^*$ by \eqref{*SP7*a}.

Hence from now on we can suppose that all the $W_{j}$s are finite.

Since for any $k\in \mathbf{N}$ we have $\ooo\in E_{h}(\aaa)$ iff $\sss^{k}\ooo\in E_{h}(\aaa)$
we can suppose that $Z_{0}=\ess$ and hence
$\ooo=W_{1}Z_{1}W_{2}Z_{2}...$.
Choose $k_{j}$, $j=1,2,...$  such that the substring $W_{j}Z_{j}$ of $\ooo$
starts at $\ooo_{k_{j}}$, 
that is $W_{j}Z_{j}=\ooo_{k_{j}}\ooo_{k_{j}+1}...\ooo_{k_{j+1}-1}.$ 
We denote by $k_{j}'$ the place where $Z_{j}$ starts, that is,
$W_{j}=\ooo_{k_{j}}\ooo_{k_{j}+1}...\ooo_{k_{j}'-1}$ and
$Z_{j}=\ooo_{k_{j}'}\ooo_{k_{j}'+1}...\ooo_{k_{j+1}-1}$.


Suppose that we have a $j$ for which
\begin{equation}\label{*SP11*a}
\text{there exists }n\in \{ k_{j},...,k_{j+1}-1 \}
 \text{
  such that 
 }
 g(\sss^{n}\ooo)>0.
\end{equation}
We denote the set of such $j$s by $\mathbf{J}$.

Then $g(\sss^{n}\ooo)=\eee/4$.
We define $n_j$ to be the maximal $n$ satisfying the inequality in 
\eqref{*SP11*a}.
Since $Z_j$ does not contain a substring of the form $X$ or $Y$, $\sigma^n \omega \notin \mathbf{P}$ for any $k_j' < n < k_{j+1}$. 
Hence $n_j < k_j'$.
Moreover, by the definition of $g$ and $\mathbf{P} $ we have
$$n_j=k_{j}'-m(2\ell + 5)k_{A} +(\ell -1)   k_{A}.$$
Put
$$k_{j}''=n_j-(\ell -1)   k_{A}+(2\ell + 5)k_{A} .$$

Then by the definition of $g$
\begin{equation}\label{*SP12*a}
\sss^{k_{j}''}\ooo|(2\ell + 5)k_{A} \in \{X, Y\} 
\text{ and }
\sss^{k_{j}}\ooo|(k_{j}''-k_{j})\in \{X, Y\} ^{(k_{j}''-k_{j})/(2\ell + 5)k_{A} },
\end{equation}
where $(k_{j}''-k_{j})/(2\ell + 5)k_{A} $ is an integer, that is 
$\sss^{k_{j}}\ooo|(k_{j}''-k_{j})$ starts with a long string of $X$s and $Y$s.
Hence
\begin{equation}\label{*SP13*a}
\frac{1}{k_{j}''-k_{j}}\sum_{n=k_{j}}^{k_{j}''-1}
g(\sss^{n}\ooo)
=
\frac{\ell \eee}{4(2\ell + 5)k_{A} }.
\end{equation}
It is also clear that
\begin{equation}\label{*SP13*b}
\frac{1}{k_{j+1}-k_{j}''}\sum_{n=k_{j}''}^{k_{j+1}-1}g(\sss^{n}\ooo )=0.
\end{equation}

Suppose that $\ddd>0$ is given.
We want to find $N_{\ddd}$  such that for $N\geq N_{\ddd}$
we have
\begin{equation}\label{*SP13*c}
 \frac{1}{N}\sum_{n=0}^{N-1} h(\sss^{n}\ooo )< b^*+\ddd.
\end{equation}

We can suppose that $\mathbf{J}$ is infinite since otherwise
there exists $N_{1}$
 such that 
 $h(\sss^{n}\ooo )=f(\sss^{n}\ooo )$ for $n\geq N_{1}$
 and $\aaa\leq \aaa^{*}_{f,\max}<b^*$ holds.
 
 We will obtain $N_\delta$ by splitting $\ooo$ into two infinite substrings: The "good part" and the "bad part." The "good part" can be obtained as the concatenation of the substrings
 $\sss^{k_{j}}\ooo|(k_{j}''-k_{j}),$ $j\in \mathbf{J}$, while the "bad part" of $\ooo$ is the "rest" of $\ooo$,
 that is what is left of $\ooo$ if we delete from it the good part. We denote this bad part $\ooo^b$.
 To be more specific
 if $j\not\in \mathbf{J}$
 then we take the string
 $\sss^{k_{j}}\ooo|(k_{j+1}-k_{j})$, otherwise if
 $j\in \mathbf{J}$
 then we take the string
 $\sss^{k_{j}''}\ooo|(k_{j+1}-k_{j}'')$
and concatenate these strings. To achieve our goals of obtaining $N_\delta$, we will be observing Birkhoff averages along the "good" part $\mathbf{N}^g := \cup_{j\in\mathbf{J}}\{ k_{j},...,k_{j}''-1 \}$ and the "bad" part $\mathbf{N}^b = \{ 0,1,... \}\sm \mathbf{N}^{g}$. In particular, we will be at some point evaluating Birkhoff sum on the point $\omega^b$ rather than $\omega$; we will explain why this works, particularly when we verify equation \eqref{*SP18*a}.

Using \eqref{*SP7*a}, \eqref{*SP13*a}, and the definition of the strings $X$ and $Y$
it is clear that if $j\in J$ then
\begin{equation}\label{*SP15*a}
\frac{1}{k_{j}''-k_{j}}\sum_{n=k_{j}}^{k_{j}''-1}h(\sss^{n}\ooo )=b^*.
\end{equation} 

We also know that if $n \in \mathbf{N}^b$
then $g(\sss^{n}\ooo )=0$ and hence $h(\sss^{n}\ooo )=f(\sss^{n}\ooo )$.

Moreover, whenever $t \in \NN$ satisfies the inequality $(t+1)(2\ell + 5)k_{A} \leq k_{j}''-k_{j}$, for some $j\in\mathbf{J}$, then
\begin{equation}\label{*SP16*a}
\frac{1}{(2\ell + 5)k_{A} }\sum_{n=k_{j}+t(2\ell + 5)k_{A} }^{k_{j}+(t+1)(2\ell + 5)k_{A} -1}h(\sss^{n}\ooo )=b^{*}
\end{equation}
holds as well.

From \eqref{*SP16*a} and the boundedness of $h$
it follows that we can select $N_{\ddd}'$
 such that for $N>N_{\ddd}'$
\begin{equation}\label{*SP17*a}
\frac{1}{ \#\{ n\in \mathbf{N}^{g}: n<N \}}\sum_{n\in \mathbf{N}^{g},\  n<N}h(\sss^{n}\ooo )< b^{*}+\frac{\ddd}{2}.
\end{equation}
Denote $\#\{ n\in \mathbf{N}^{b}: n<N \}$ by $\nu_{b}(N)$.

Next we need to estimate 
\begin{equation}\label{*SP17*b}
\frac{1}{\nu_{b}(N)}\sum_{n\in \mathbf{N}^{b},\  n<N}h(\sss^{n}\ooo )=
\frac{1}{\nu_{b}(N)}\sum_{n\in \mathbf{N}^{b},\  n<N}f(\sss^{n}\ooo ).
\end{equation}

A little later we will show that
\begin{equation}\label{*SP18*a}
\frac{1}{\nu_{b}(N)}\sum_{n\in \mathbf{N}^{b},\  n<N}f(\sss^{n}\ooo )=
\frac{1}{\nu_{b}(N)}\sum_{n=0}^{\nu_{b}(N)-1}f(\sss^{n}\ooo ^{b}).
\end{equation}
Next we show that if we verified this then we can complete our proof. 
Indeed by Lemma
\ref{uniformity}
$$\limsup_{N'\to\oo}\frac{1}{N'}\sum_{n=0}^{N'-1}f(\sss^{n}\ooo ^{b})\leq \aaa^{*}_{f,\max}$$
and hence we can select $N_{\ddd}\geq N_{\ddd}'$
 such that if $N\geq N_{\ddd}$ then $\nu_{b}(N)$ is sufficiently large to have
$$\frac{1}{\nu_{b}(N)}\sum_{n=0}^{\nu_{b}(N)-1}f(\sss^{n}\ooo ^{b})\leq \aaa^{*}_{f,\max}+\frac{\ddd}{2}.$$
By \eqref{*SP18*a} this yields that
$$\frac{1}{\nu_{b}(N)}\sum_{n\in \mathbf{N}^{b},\  n<N}f(\sss^{n}\ooo )<\aaa^{*}_{f,\max}+\frac{\ddd}{2}<
b^*+\frac{\ddd}{2}.$$
From this, \eqref{*SP17*a}, and \eqref{*SP17*b}, it follows that for $N>N_{\ddd}$
$$ \frac{1}{N}\sum_{n=0}^{N-1} h(\sss^{n}\ooo )<b^*+\frac{\ddd}{2}.$$
Since a suitable $N_{\ddd}$ can be chosen for any $\ddd>0$ we proved that $\aaa\leq b^*.$

Hence, to complete the proof of the theorem we need to verify \eqref{*SP18*a}.
But this is not difficult. 
Since $f\in \PCC^{\mathbf{k}}(\OOO)$ we know that $f(\sss^{n}\ooo )$
depends only on the string $\sss^{n}\ooo |\mathbf{k}$. Observe that during the definition of $\ooo^{b}$ we concatenate strings which start with
a string $A$ and $A$ is of length $   k_{A}>\mathbf{k}$.
Indeed, if $j\not \in\mathbf{J}$
then during the definition we concatenate the string $\sss^{k_{j}}\ooo|(k_{j+1}-k_{j})=W_{j}Z_{j}$, and $W_{j}$ starts with $X$ or $Y$ and they both start with $A$. 
If $j\in \mathbf{J}$ then we take the string $\sss^{k_{j}''}\ooo|(k_{j+1}-k_{j}'')$
and by \eqref{*SP12*a} this string starts with $A$.

We can define a function $\ppp:\{ 0,1,... \}\to \mathbf{N}^{b}$
the following way. 
For $n\in \{ 0,1,... \}$ if we take $\ooo_{n}^{b}$ then this entry
corresponded to exactly one entry $\ooo_{\ppp(n)}$ of $\ooo$ and belonged to a concatenated string making up $\ooo^{b}$. 
Suppose that $k_{j}\leq \ppp(n)<k_{j+1}$.
If $\ppp(n)\leq k_{j+1}-\mathbf{k}$ then the strings $\sss^{n}\ooo^{b}|\mathbf{k}$ and $\sss^{\ppp(n)}\ooo|\mathbf{k}$ are identical and hence $f(\sss^{n}\ooo^{b})=f(\sss^{\ppp(n)}\ooo).$
If $\ppp(n)>k_{j+1}-\mathbf{k}$ then there is an $n'<n+\mathbf{k}$
such that $\ppp(n')=k_{j+1}$.  
By our concatenation procedure it is clear that the strings
$\sss^{n}\ooo^{b}|(n'-n)$ and
$\sss^{\ppp(n)}\ooo|(n'-n)$ are identical.
It is also clear that  $\ppp(n')=k_{j+1}$ and
$\sss^{\ppp(n')}\ooo|   k_{A} =A$, since
we take the first $   k_{A}$ entries of a string
which equals $X$ or $Y$.
Now recall our earlier observation 
that $\ooo^{b}$ was obtained by the concatenation of strings which start with $A$.
Hence $\sss^{n'}\ooo^{b}$ starts with the string $A$.
This implies again that $f(\sss^{n}\ooo^{b})=f(\sss^{\ppp(n)}\ooo).$ 
\end{proof}

\subsection{A generic continuous function has a continuous Birkhoff spectrum}\label{SS-GCBS}
We have seen in the previous subsection that functions with dicontinuous spectrum form a dense set in $C(\Omega)$. 
Next we will show that the set of such functions is of first category.


\begin{theorem} \label{contspectrum}
For the generic continuous function $f\in C(\Omega)$, we have that $S_f$ is continuous on $\mathbb{R}$.
\end{theorem}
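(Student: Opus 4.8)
The plan is to exhibit a dense $G_\delta$ set of continuous functions with continuous spectrum, i.e. with $S_f(\alpha_{f,\min}^*) = S_f(\alpha_{f,\max}^*) = 0$. I would focus on the right endpoint (the left endpoint being symmetric) and build the $G_\delta$ set as $\bigcap_{m} \mathcal{U}_m$, where $\mathcal{U}_m$ is, roughly, the set of $f$ for which there is \emph{some} $\alpha$ near $\alpha_{f,\max}^*$ with $E_f(\alpha) \neq \emptyset$ but $S_f(\alpha) < 1/m$. The key point is that each $\mathcal{U}_m$ should be open and dense in $C(\Omega)$.

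\textbf{Density of $\mathcal{U}_m$.} Given $f_0 \in C(\Omega)$ and $\varepsilon > 0$, I would first approximate $f_0$ within $\varepsilon/2$ by a $\PCC^{\mathbf{k}}$ function $f$. By Lemma \ref{periodicmax}, $\alpha_{f,\max}^*$ is realized by a periodic configuration $\omega' = A^\infty$. The idea is then to perturb $f$ by a small $\PCC$ function $g$, $\|g\| < \varepsilon/2$, which \emph{strictly increases} the Birkhoff average along $A^\infty$ and its shifts, so that for the perturbed function $h = f + g$ the new maximal value $\alpha_{h,\max}^*$ is achieved only along a \emph{very thin} set — ideally along configurations forced to follow the cycle $A$ almost all the time with only sparse, controlled deviations. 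One concrete way: let $g$ be supported on a single long cylinder built from $A$ so that $g > 0$ exactly at the translates of that cylinder; one then argues, much as in the proof of Lemma \ref{uniformity} and Theorem \ref{*ppropdisc} (cycle/closed-walk decomposition in the de Bruijn-type graph on $\{0,1\}^{\mathbf k}$), that to keep the average of $h$ at its maximum a configuration must spend an overwhelming proportion of its time in a single optimal cycle, so $E_h(\alpha_{h,\max}^*)$ is contained in a set of Hausdorff dimension $\le c/\ell$ which can be made $< 1/m$ by taking $\ell$ large. This gives $h \in \mathcal{U}_m$ with $\|f_0 - h\| < \varepsilon$.

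\textbf{Openness of $\mathcal{U}_m$.} This is where the Norm Continuity Theorem (Theorem \ref{Thm:NormCont}) and especially Lemma \ref{Lemma:ptEstimate} and Remark \ref{*remsup} do the work. If $h \in \mathcal{U}_m$, pick $\alpha_0$ near $\alpha_{h,\max}^*$ with $0 \le S_h(\alpha_0) < 1/(2m)$ and $E_h(\alpha_0) \neq \emptyset$; I want a $\delta$-ball around $h$ inside $\mathcal{U}_m$. By Remark \ref{*remsup}, $\alpha_{g,\max}^*$ stays within $\delta$ of $\alpha_{h,\max}^*$. The subtlety is that upper semicontinuity of $S$ is only \emph{along} $L_h$, not in the $C(\Omega)$ variable at an endpoint, so I cannot directly transfer the smallness of $S_h(\alpha_0)$. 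The fix is to choose $\alpha_0$ to be the endpoint-like witness but verify its stability via concavity: near the endpoint $S_h$ is small on a whole interval $[\alpha_0 - \eta, \alpha_{h,\max}^*]$ (by concavity and the endpoint being a near-zero of $S_h$), and Lemma \ref{Lemma:ptEstimate} applied to $g$ near $h$ only shifts $\alpha$ by $\delta$ and controls $S$ from below; combined with the fact that for $g$ close to $h$ the endpoint $\alpha_{g,\max}^*$ lies inside this interval, concavity of $S_g$ forces $S_g(\alpha_{g,\max}^*)$ to be small. Taking $\delta$ small enough relative to $\eta$ and the slope of $S_h$ on that interval pins $S_g(\alpha_{g,\max}^*) < 1/m$.

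\textbf{Conclusion.} Running the same construction at the left endpoint gives dense open sets $\mathcal{U}'_m$; then $\bigcap_m (\mathcal{U}_m \cap \mathcal{U}'_m)$ is a dense $G_\delta$, and every $f$ in it has $S_f(\alpha_{f,\min}^*) = S_f(\alpha_{f,\max}^*) = 0$, hence $S_f$ is continuous on $\mathbb{R}$. The main obstacle I anticipate is the openness step: making precise that a small $C(\Omega)$-perturbation cannot create new, higher-dimensional level sets at the (possibly moving) endpoint of the support. This requires carefully combining Lemma \ref{Lemma:ptEstimate}, Remark \ref{*remsup}, and the one-sided concavity estimate near the endpoint rather than any naive continuity of $f \mapsto S_f(\alpha_{f,\max}^*)$, which genuinely fails (as the discussion after Theorem \ref{Thm:NormCont} points out).
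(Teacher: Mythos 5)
Your high-level strategy---exhibit dense open sets $\mathcal{U}_m$ and take the intersection---is the mirror image of what the paper does (the paper shows the complementary sets $Z_\theta=\{h : S_h > \theta \text{ on } [\int h, \alpha_{h,\max}^*]\}$ are nowhere dense), and the two pivots are the same: a perturbation lemma gives density, and the Norm Continuity Theorem \ref{Thm:NormCont} gives the open neighborhood. Your openness step is essentially the paper's; the only wrinkle is that ``$h\in\mathcal{U}_m$'' guarantees a witness with $S_h(\alpha)<1/m$, not $<1/(2m)$, so you need a safety margin (either define $\mathcal{U}_m$ with the witness bound $1/(2m)$, or show $\mathcal{U}_{2m}\subset\operatorname{int}\mathcal{U}_m$, or just prove nowhere-denseness of the complement directly as the paper does). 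Also worth noting: once $\alpha_0$ is an interior point of $L_h$, you do not actually need the concavity patch you describe---Theorem \ref{Thm:NormCont} plus Remark \ref{*remsup} already place $\alpha_0$ in $L_g\cap(\int g,\alpha_{g,\max}^*)$ with $S_g(\alpha_0)$ close to $S_h(\alpha_0)$, and then concavity/monotonicity propagates the smallness to the endpoint only \emph{after} intersecting over all $m$.

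The real gap is in the density step. You propose a $\PCC$ perturbation $g$ ``supported on a single long cylinder built from $A$,'' citing the techniques of Lemma \ref{uniformity} and Theorem \ref{*ppropdisc}. Two problems. First, the citation is off: Theorem \ref{*ppropdisc} is a construction that achieves the \emph{opposite} of what you want (it makes $S_h(\alpha_{h,\max}^*)>0$), and Lemma \ref{uniformity} is a uniformity estimate, not a cycle-counting argument; the relevant graph-cycle machinery lives in Lemma \ref{periodicmax} and Theorem \ref{pccinfiniteder}. Second, and more substantively, the claim ``to keep the average at its max the configuration must spend density $\geq 1-c/\ell$ of its time in a single optimal cycle, hence $\dim_H E_h(\alpha_{h,\max}^*)\leq c/\ell$'' is not established by your sketch, and the naive version is delicate because a cylinder indicator only constrains a fixed window of coordinates, leaving the dimension estimate nontrivial. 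The paper avoids this entirely by using the \emph{non-}$\PCC$ Lipschitz perturbation $g_0(\omega)=\min_i d(\omega,\sigma^i\omega^*)$ (Lemma \ref{perturbation}): since $g_0$ vanishes exactly on the orbit of $\omega^*$ and decreases as $\omega$ approaches it at every scale, $\frac1N\sum g_0(\sigma^n\omega)\to 0$ forces $\omega$ to agree with a shift of $\omega^*$ on a density-one set of coordinates, giving $E_{f+g}(\alpha_{f+g,\max}^*)\subset H_{\omega^*}$ with $\dim_H H_{\omega^*}=0$. If you insist on a $\PCC$ perturbation, there is a viable alternative you did not state: perturb $f$ by a $\PCC$ function so that the de Bruijn graph has a \emph{unique} cycle of maximal average; then by the variational formula (Theorem \ref{theoremFFW}) and the ergodic-decomposition / cycle-density argument of Lemma \ref{periodicmax}, the only invariant measure with $\int h\,d\mu=\alpha_{h,\max}^*$ is the periodic-orbit measure, which has zero entropy, so $S_h(\alpha_{h,\max}^*)=0$ exactly. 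As written, however, your density step is a plan rather than a proof, and it is the place where your sketch and the paper genuinely diverge.
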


\begin{remark} \label{coboundary}
This theorem implies that the set of continuous functions with discontinuous Birkhoff spectrum is a  set of first category. 
This set includes functions which are cohomologous to a constant, as we observed in Example \ref{cob_ex}, hence this is a possible way to see that these functions form a set of first category.
\end{remark}

To prove Theorem \ref{contspectrum}, we need the following lemma, which shows that one can "perturb" a PCC function so that the new function would have a continuous spectrum.


\begin{lemma} \label{perturbation}
Let $f\in \PCC^k(\Omega)$ and let $\varepsilon>0$. 
Then there exists $g\in C_0(\Omega)$ such that $\norm{g}<\varepsilon$, $S_{f+g}$ vanishes at $\alpha_{f+g,\max}^*$ and
$\alpha_{f,\min}^*-\eee\leq \alpha_{f+g,\min}^*\leq\alpha_{f+g,\max}^*
\leq \alpha_{f,\max}^*+\eee$.
\end{lemma}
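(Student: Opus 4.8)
The plan is to perturb $f$ by a small $\PCC_0$ function $g$ that destroys the ``rigidity'' which allows $S_f$ to be positive at the endpoint $\alpha_{f,\max}^*$. Recall from Lemma \ref{periodicmax} that there is a periodic configuration $\omega' = A^\infty$ realizing $\alpha_{f,\max}^*$, where we may take $|A| = k_A$ to be a multiple of $k$. By Lemma \ref{uniformity}, for any configuration $\omega$ the Birkhoff average is eventually $\leq \alpha_{f,\max}^* + \eta$. The heuristic is that $E_f(\alpha_{f,\max}^*)$ can only be ``large'' (positive dimension) if there are many distinct configurations whose averages all converge to $\alpha_{f,\max}^*$; since $g$ will be chosen with $\int g\, d\lambda = 0$, on any cylinder of ``generic'' type the perturbation contributes a negative amount on average, so that the new maximal average $\alpha_{f+g,\max}^*$ is attained only along configurations that avoid the perturbation, and these form a ``thin'' (zero-dimensional) set. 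The norm bound $\|g\| < \varepsilon$ together with Remark \ref{*remsup} automatically gives the inclusion $\alpha_{f,\min}^* - \varepsilon \leq \alpha_{f+g,\min}^* \leq \alpha_{f+g,\max}^* \leq \alpha_{f,\max}^* + \varepsilon$, so the only real content is making $S_{f+g}$ vanish at the right endpoint.

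The construction of $g$ I would use: fix a long block $A$ as above, and single out one coordinate position (say the first) inside each copy of $A$. Define $g \in \PCC^{k_A}$ to take a small positive value $c$ on a sparse family of cylinders that ``mark'' the configurations looking like $A^\infty$, and a compensating small negative value on a slightly larger family so that $\int g \, d\lambda = 0$; more precisely, I would let $g$ be supported on cylinders determined by length-$k_A$ words, set $g = -c'$ on the cylinder $[A]$ (and possibly a few nearby words) and $g = c$ on enough other length-$k_A$ cylinders to force $\int g\, d\lambda = 0$, with $\max(c,c') < \varepsilon$. The point is: along $\omega' = A^\infty$ the new average drops strictly below $\alpha_{f,\max}^*$, while for the new supremum $\alpha_{f+g,\max}^*$ to be attained, a configuration must spend a positive proportion of its time in positions where $g > 0$; but wherever $g > 0$ the original $f$-average is strictly below $\alpha_{f,\max}^*$ (because those positions correspond to blocks different from $A$, away from where $f$ achieves its maximal cycle average). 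Balancing these two competing effects — how far below $\alpha_{f,\max}^*$ the $f$-average drops when one leaves the optimal cycle versus how much the positive part of $g$ can push it up — is exactly the quantitative estimate one has to carry out, in the same spirit as the chain of inequalities \eqref{*SP5*b}--\eqref{*SP6*a} in the proof of Theorem \ref{*ppropdisc}, and it will dictate how small $c$ must be and how long $A$ must be.

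Once $g$ is fixed this way, I would argue that $E_{f+g}(\alpha_{f+g,\max}^*)$ is contained in a set of zero Hausdorff dimension as follows. Let $\beta^* = \alpha_{f+g,\max}^*$. Using the graph/cycle analysis of Lemma \ref{periodicmax} applied to $h = f+g \in \PCC^{\max(k,k_A)}$, every configuration with Birkhoff average $\beta^*$ must, asymptotically, use only cycles of average exactly $\beta^*$; the construction guarantees these optimal cycles all lie within a restricted sub-graph (roughly, the vertices where $g \leq 0$, i.e.\ the ones looking like shifts of $A$), and crucially this sub-graph does not contain the whole shift — it is a proper subshift of finite type with topological entropy strictly less than $\log 2$, in fact the relevant level set sits inside a subshift whose entropy can be made $0$ by choosing the perturbation to kill even the $A$-cycle average (so the only surviving optimal cycle is a single loop). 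Hence $\dim_H E_{f+g}(\beta^*) = 0$, i.e.\ $S_{f+g}(\beta^*) = 0$, which is the continuity of the spectrum at the maximal endpoint.

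The main obstacle I anticipate is the bookkeeping needed to show simultaneously that (i) the perturbation really does strictly lower the average along the original optimal configuration while (ii) no \emph{new} large family of configurations accidentally achieves average $\beta^*$ — in other words, pinning down that the optimal cycles for $h$ genuinely form an entropy-zero (or at least proper-subshift) collection. This is the place where one must combine the quantitative ``$\int g = 0$ forces an average drop on generic orbits'' estimate (via Lemma \ref{uniformity} applied to the bad part, as in \eqref{*SP18*a}) with the cycle-decomposition argument of Lemma \ref{periodicmax}. A secondary technical point is that $g$ as constructed lies a priori in $\PCC^{k_A}$ but may not have mean zero on the nose; adjusting it to land in $C_0(\Omega)$ while keeping $\|g\| < \varepsilon$ and not spoiling the support structure is routine but must be done carefully (one can always add a further small $\PCC$ correction supported on cylinders disjoint from the marked ones). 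The lower endpoint is handled symmetrically or simply absorbed into the stated two-sided bound via Remark \ref{*remsup}, so no separate work is needed there beyond what the norm estimate already gives.
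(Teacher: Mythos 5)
Your approach is substantially different from the paper's, and as written it has two concrete defects.

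First, your choice of perturbation is inconsistent. You start by saying that $g$ should ``take a small positive value $c$ on a sparse family of cylinders that `mark' the configurations looking like $A^\infty$'', but in the very next sentence you set $g=-c'$ (negative) on $[A]$ and positive elsewhere. Then, two sentences later, you claim that ``wherever $g>0$ the original $f$-average is strictly below $\alpha_{f,\max}^*$'', which only makes sense if $g$ is positive \emph{off} the orbit of $\omega^*$; but under the ``$g<0$ on $[A]$ and $g>0$ elsewhere'' version, the $(f+g)$-average along $A^\infty$ actually \emph{drops}, and some other cycle may become optimal — at which point you have no control on the level set at the new max. The sign has to be the opposite of what you finally write: the paper sets $g=-\varepsilon g_0+c$ with $g_0(\omega)=\min_{i}d(\omega,\sigma^i\omega^*)$ and $c=\varepsilon\int g_0\,d\lambda$, so that $g$ attains its maximum value $c$ exactly on the orbit of $\omega^*$ and is strictly smaller everywhere else. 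This makes $\alpha_{f+g,\max}^*=\alpha_{f,\max}^*+c$ and forces any $\omega$ achieving the new max to have $\frac{1}{N}\sum_{n}g_0(\sigma^n\omega)\to 0$, i.e.\ to spend density-one time arbitrarily close to the orbit.

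Second, and more fundamentally, you do not carry out the dimension argument, which is the entire content of the lemma. It is not enough to observe that after a suitable perturbation the ``optimal cycle'' in the finite graph of Lemma \ref{periodicmax} becomes unique: the level set $E_{f+g}(\alpha_{f+g,\max}^*)$ is not a subshift, it is the set of $\omega$ that follow that cycle on a density-one set of positions, and you still need to prove this set has Hausdorff dimension zero. The paper does this explicitly by showing $E_{f+g}(\alpha_{f+g,\max}^*)\subset H_{\omega^*}:=\{\omega:\omega|_E=\omega^* \text{ for some } E \text{ with } {\mathbf d}(E^c)=0\}$, and then exhibiting a Lipschitz map $h:\Omega\to\Omega$ with $h(H_{\mathbf 0})\supset H_{\omega^*}$, where $H_{\mathbf 0}$ (density-zero set of $1$s) has dimension zero by Example \ref{examplecontspect}. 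This last reduction is the key trick and is absent from your proposal. Also note the paper's perturbation $g_0$ is a genuine \emph{continuous} function depending on all coordinates, not a $\PCC$ function; this is what lets it vanish exactly on the orbit of $\omega^*$ and nowhere else, making the zero-density extraction (via the set $J_\omega=\{n:g_0(\sigma^n\omega)\geq 2^{-p}\}$) clean. A $\PCC$ perturbation might in principle be engineered to work, but you would then have to check, not merely assert, that spending density-one time in a finite union of deep cylinders forces the configuration into a zero-dimensional set, and also verify that the perturbation actually breaks all ties among optimal cycles — neither of which you do.

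What your proposal gets right: the observation that $\norm{g}<\varepsilon$ together with Remark \ref{*remsup} automatically gives the two-sided endpoint bound $\alpha_{f,\min}^*-\varepsilon\leq \alpha_{f+g,\min}^*\leq\alpha_{f+g,\max}^*\leq\alpha_{f,\max}^*+\varepsilon$, and the identification of Lemmas \ref{periodicmax} and \ref{uniformity} as the relevant $\PCC$ tools. But the core of the proof — a well-defined perturbation with consistent signs, and a rigorous dimension-zero argument for the resulting level set — is missing.
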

 
\begin{proof}
Let $f \in PCC^k(\Omega)$ and let $\eee > 0$. 
Let $\omega^*$ be a periodic point with prime period $p$ for which $\frac{1}{p}\sum_{n=1}^p f(\sigma^n \omega^*) = \alpha_{f, \max}^*$ (which exists by Lemma $\ref{periodicmax}$). Let $g_0(\omega) = \min_{i=1, \ldots, p} \{d(\omega, \sigma^i \omega^*)\}$, and let $g = -\eee g_0 + c$ , where $c = \eee \int g_0 \, d\lambda$, which implies $\int g \, d\lambda = 0$. 
Since $\lll(\OOO)=\diam(\Omega) = 1$, it is clear that $\norm{g} < \eee$.

Given $E \subset \NN$, we denote by  ${\mathbf d}(E)$ the   density of the set $E$, that is $\lim _{N\to\oo}\frac{\#(E\cap [1,N])}{N}$ (if it exists). We let
\[H_{\omega^*} := \{\omega \in \Omega: \omega|_E = \omega^* \text{ for some } E \subset \NN \text{ for which } {\mathbf d}(E^c) = 0 \}, \]
where $\omega|_E$  denotes the concatenation of $\ooo_{j}$, $j\in E$.
We will show that $E_{f+g}(\alpha_{f+g,\max}^*) \subset H_{\omega^*}$, and then we observe that $\dim_H H_{\omega^*} = 0$.

By using \eqref{*avgN4}  from Lemma \ref{uniformity} one can see that $\alpha_{f+g, \max}^* \leq \alpha_{f, \max}^* + c$.
Since $g_{0}(\sss^{n}\ooo^{*})=0$ for any $n$, we obtain $\alpha_{f+g, \max}^* \geq \alpha_{f, \max}^* + c$,
and hence $\alpha_{f+g, \max}^* = \alpha_{f, \max}^* + c$.
Let $\omega \in E_{f+g}(\alpha_{f+g,\max}^*)$. 
Then we must have
\[\lim_{N \to \infty} \left( \frac{1}{N} \sum_{n=1}^N f(\sigma^n \omega) - \frac{\eee}{N} \sum_{n=1}^N g_0(\sigma^n \omega) \right) = \alpha_{f, \max}^* \, , \]
and this is only possible if $\frac{1}{N} \sum_{n=1}^N f(\sigma^n \omega) \to \alpha_{f, \max}^*$, and, in particular, 
$$\frac{1}{N} \sum_{n=1}^N g_0(\sigma^n \omega) \to 0 \text{ as } N \to \infty.$$
This implies  that the set $$J_\omega := \{n \in \NN: g_0(\sigma^n \omega) \geq 2^{-p} \}$$ has zero density.
Observe that if $g_{0}(\sss^{n}\ooo)<2^{-p}$ for $n=j',...,j'+l$ then there exists $i\in \{ 0,...,p-1 \}$  such that 
\begin{equation}\label{*ooo*}
(\sss^{n}\ooo)_{j'}^{j'+l+p}=\sss^{i}\ooo^{*}|l+p+1.
\end{equation}

The case when $J_\omega$ is finite is much easier and is left to the reader, we detail only the case when $J_\omega$ is infinite.

Suppose we enumerate $J_\omega = \{j_1, j_2, j_3, \ldots \}$ in the increasing order and we set $j_0 := 1$. 
Then for each $k \in \NN\cup \{ 0 \}$, 
there exists $i_{k}\in \{ 0,...,p-1 \}$ such that the (possibly empty) string
$\ggg(j_{k}):=(\omega)_{j_{k}+1}^{j_{k+1} - 1}$ equals  $\sss^{i_{k}}\omega^*|j_{k+1}-j_{k}-1$. 
Hence, we have
\[\omega|_{J_\omega^c} = \gamma(j_0)\gamma(j_1)\gamma(j_2)\cdots .\]

Since $\ooo^{*}$ is periodic we can choose $m_{k}\in \{ 0,...,p-1 \}$
 such that if 
$\gamma^*(j_k)=\sss^{m_{k}}\ggg(j_{k})$, that is we throw away the first $m_{k}$ entries of $\gamma(j_k)$, then
\[\gamma^*(j_0) \gamma^*(j_1) \gamma^*(j_2) \cdots = \omega^* \, . \]
Put $F=\cup_{k}\{ j_{k},...,j_{k}+m_{k} \}$.
Then $F \subset \bigcup_{i=0}^{p-1} J_\omega + i$ (where $A + b = \{a + b : a \in A\}$ for any $A \subset \NN$ and $b \in \NN$), which has a zero density. 
Setting $E = F^c$, we get $\omega|_E = \omega^*$. 
Hence, $\omega \in H_{\omega^*}$, which shows that $E_{f+g}(\alpha_{f+g, \max}^*) \subset H_{\omega^*}$.

We now show that $\dim_H H_{\omega^*} = 0$. 
Consider the set $H_{\mathbf{0}} := \{\ooo \in \Omega: {\mathbf d}(\{i \in \NN: \omega_i = 1\}) = 0\}$. 
Due to Example \ref{examplecontspect} we see that $\dim_H(H_\mathbf{0}) = 0$ as it equals $S_f(0)$ for $f$ defined in that example. 
Given $\ooo\in\OOO$ and $i\in\NN$ we set $\nu(i,\ooo)=\# \{ j: \ooo_{j}=0,\  j\leq i \}.$
We define a map $h: \Omega \to \Omega$ as follows: $h(\omega) = h_1h_2h_3\ldots$, where
\[h_i := \left\{ \begin{array}{ll} 
\omega_{\nu(i,\ooo)}^* & \text{if } \omega_i = 0 \\
1 - \omega_{\nu(i,\ooo)+1}^* & \text{if } \omega_i = 1.
\end{array}
\right. \]
It is easy to see that  $h$ is Lipschitz. 
One can also verify easily that $h(H_{\mathbf{0}}) \supset H_{\omega^*}$. 
Therefore, $0\leq \dim_H(H_{\omega^*}) \leq \dim_H(h(H_\mathbf{0})) = 0$.
\end{proof}

What remains from the proof of Theorem  \ref{contspectrum} is rather standard:

\begin{proof}[Proof of Theorem \ref{contspectrum}]
It suffices to prove that a generic continuous function $h$ has continuous spectrum at the points $\alpha_{h,\min}^*$ and $\alpha_{h,\max}^*$, and due to symmetry reasons, it suffices to prove the continuity in $\alpha_{h,\max}^*$ (if it holds in a residual set, the other also does in another residual set, and the intersection of these sets is still residual). We will prove in fact that the set
\[Z=\{h\in C(\Omega) : S_h \text{ is not continuous at } \alpha_{h,\max}^*\}\]
is meager. 
Note that we know that $S_h$ is concave and achieves its maximum at $\int h \, d\lambda$, hence
\[Z=\bigcup_{n=1}^{\infty}Z_{\frac{1}{n}},\]
where
\[Z_\theta=\left\{h\in C(\Omega) : S_h(x)>\theta \text{ for all } x\in\left[\int h \, d\lambda,\alpha_{h,\max}^*\right]\right\}.\]
Now it suffices to prove that each $Z_\theta$ is nowhere dense, and clearly it is enough to consider small enough $\theta<1$. 
To this end, take arbitrary $f\in\PCC^k(\Omega)$ for some $k$, and $\varepsilon>0$. 
By Lemma \ref{perturbation}, we can find $f+g$ in the $\varepsilon$-neighborhood of $f$ such that it has continuous spectrum at $\alpha_{f+g,\max}^*$. 
Then $\alpha_{f+g,\max}^*>\int (f+g)\geq \alpha_{f+g,\min}^*$ necessarily holds, as $S_{f+g}(\int (f+g)d\lll)=1$. 
Now by continuity, we can take $x\in\left[\int h,\alpha_{f+g,\max}^*\right]$ such that $0<S_{f+g}(x)<\frac{\theta}{2}$. By its concavity 
$S_{f+g}$ is monotone decreasing on  $[\int h,\alpha_{f+g,\max}^*]$
hence we can assume that 
\[x-\alpha_{f,\min}^*\geq \alpha_{f,\max}^*-x.\]
Now apply Theorem \ref{Thm:NormCont} for $f+g$ with
\begin{equation} \label{nghbdef}
\varepsilon'=\min\Big \{\frac{\theta}{2},\alpha_{f,\max}^*-x\Big \}.
\end{equation}
It guarantees that $0<S_h(x)<\theta$ for any $h$ with $\norm{h-(f+g)}<\delta'$ for some $\delta'>0$. 
Moreover, if $h$ and $f+g$ are close enough to each other, we also have that their integral cannot differ by much, hence we also have that $x\in\left[\int h,\alpha_{h,\max}^*\right]$. 
Consequently, if $h$ is in a sufficiently small neighborhood of $f+g$ satisfying both this integral condition and what is given by (\ref{nghbdef}), then $h$ is not in $Z_\theta$. 
It yields that $Z_\theta$ is nowhere dense, as $PCC(\Omega)$ is dense, and in the neighborhood of an arbitrary $f$ belonging to this set we constructed an open ball which is disjoint from $Z_\theta$. 
It concludes the proof.
\end{proof}

\subsection
{Supports of generic spectra are in $(\aaa_{f,\min},\aaa_{f,\max})$}
 \label{SS-gensp}
 In Example \ref{examplecontspect} we saw a very simple $\PCC$ function
 for which the range of the function $[\aaa_{f,\min},\aaa_{f,\max}]$
 coincides with the support of the spectrum
 $[\aaa_{f,\min}^{*},\aaa_{f,\max}^{*}]$.
In this subsection we verify that for the generic continuous function this is not true. In fact, we prove a little more; we show that the set of functions for which $[\aaa_{f,\min}^{*},\aaa_{f,\max}^{*}] \subset (\aaa_{f,\min},\aaa_{f,\max})$ is open and dense.

\begin{theorem}\label{*typsup}
For a dense open set $\cag\sse C(\OOO)$ we have 
\begin{equation}\label{*afmsz}
\alpha_{f,\min}<\alpha_{f,\min}^*<\alpha_{f,\max}^*<\alpha_{f,\max}
\end{equation}
hence the generic $f\in C(\OOO)$ satisfies \eqref{*afmsz}.
\end{theorem}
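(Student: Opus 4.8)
The plan is to prove Theorem \ref{*typsup} by establishing two separate statements: (i) that the set $\cag$ of functions satisfying \eqref{*afmsz} is \emph{open}, and (ii) that it is \emph{dense}; genericity then follows from the Baire category theorem. For openness, suppose $f \in \cag$, so $\alpha_{f,\min} < \alpha_{f,\min}^* < \alpha_{f,\max}^* < \alpha_{f,\max}$. The quantities $\alpha_{g,\min}$ and $\alpha_{g,\max}$ vary continuously in $g$ with respect to the sup norm (indeed $|\alpha_{f,\max}-\alpha_{g,\max}| \le \|f-g\|$ and likewise for the min). By Remark \ref{*remsup}, $|\alpha_{f,\max}^*-\alpha_{g,\max}^*| < \|f-g\|$ and $|\alpha_{f,\min}^*-\alpha_{g,\min}^*| < \|f-g\|$. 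Hence if $\|f-g\|$ is smaller than one quarter of $\min\{\alpha_{f,\min}^*-\alpha_{f,\min},\, \alpha_{f,\max}-\alpha_{f,\max}^*\}$ (this is positive and the quantity $\alpha_{f,\min}^* < \alpha_{f,\max}^*$ is preserved since they each move by less than that gap), all four inequalities in \eqref{*afmsz} persist for $g$. Thus $\cag$ is open.

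For density, fix an arbitrary $f_0 \in C(\OOO)$ and $\varepsilon > 0$; I want to produce $h$ with $\|f_0 - h\| < \varepsilon$ satisfying \eqref{*afmsz}. First approximate $f_0$ by a $\PCC^k$ function $f$ with $\|f_0 - f\| < \varepsilon/2$. It now suffices to perturb $f$ within $\varepsilon/2$ to strictly separate the support of the spectrum from the range. The key tool is Lemma \ref{periodicmax}: there is a periodic point $\omega^*$ with $\lim_N \frac{1}{N}\sum_{n=1}^N f(\sigma^n\omega^*) = \alpha_{f,\max}^*$, and symmetrically a periodic point realizing $\alpha_{f,\min}^*$. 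The idea is to add a small bump: choose a long cylinder $[W]$ (where $W$ is a long prefix of the periodic string $A^\infty$ associated to $\omega^*$, long enough that it is not visited with full density by the orbit realizing the max) and define a perturbation $g$ that is strictly positive on a neighborhood of the orbit of $\omega^*$ but zero elsewhere — more precisely, $g$ supported away from the points where Birkhoff averages stay near $\alpha_{f,\max}^*$. This strictly raises $\alpha_{f+g,\max}^*$ above $\alpha_{f,\max}^*$... but that is the wrong direction.

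The correct approach is the reverse: I want to \emph{lower} $\alpha_{f+g,\max}^*$ strictly below $\alpha_{f,\max}$, or rather to ensure $\alpha_{h,\max}^* < \alpha_{h,\max}$. Since $\alpha_{h,\max} = \max_\omega h(\omega)$ is attained at a single cylinder, while $\alpha_{h,\max}^*$ is attained only if there is an orbit spending asymptotic density $1$ near that maximizing cylinder, the generic obstruction is exactly that. So I would instead argue as follows: take the $\PCC^k$ approximant $f$; by Lemma \ref{periodicmax} the value $\alpha_{f,\max}^*$ is attained by a periodic orbit which is a cycle $C_0$ in the de Bruijn-type graph. If $\alpha_{f,\max}^* = \alpha_{f,\max}$ then the cycle $C_0$ must consist only of vertices where $f$ attains its max value; perturb $f$ by subtracting a tiny amount on one such vertex (one $k$-cylinder) to strictly decrease all cycle averages passing through it while keeping $\alpha_{f,\max}$ unchanged (choose the perturbed vertex so that $\alpha_{f,\max}$ is still attained elsewhere) — after this, $\alpha_{f',\max}^* < \alpha_{f',\max}$. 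Symmetrically handle the lower endpoint with a second disjoint perturbation, and also handle the degenerate case $\alpha_{f,\min}^* = \alpha_{f,\max}^*$ (cohomologous-to-constant situation) by a generic perturbation that makes the spectrum nondegenerate. Combining, we obtain $h$ with $\|f_0 - h\| < \varepsilon$ satisfying \eqref{*afmsz}.

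The main obstacle I anticipate is the density step, specifically the bookkeeping needed to guarantee that a small perturbation on a single $k$-cylinder simultaneously (a) strictly drops $\alpha^*_{\cdot,\max}$ and does not raise it back via some other cycle, (b) preserves or only negligibly changes $\alpha_{\cdot,\max}$ so that the strict inequality $\alpha^*_{h,\max} < \alpha_{h,\max}$ genuinely holds, and (c) does not interfere with the analogous perturbation at the bottom — this requires choosing the two perturbed cylinders disjointly and controlling cross-effects. The graph-theoretic description from the proof of Lemma \ref{periodicmax}, where $\alpha_{f,\max}^*$ equals the maximal cycle average over the finite set $\mathcal{C}$ of cycles, is the right language: lowering $f$ on one vertex $v$ by $\eta$ decreases the average of any cycle through $v$ by at least $\eta/|C|_{\max}$, and leaves cycles avoiding $v$ untouched, so choosing $v$ on a maximizing cycle and $\eta$ small gives a strict decrease of the maximum as long as not every maximizing cycle is forced through $v$ — which can be arranged, or else handled by iterating the perturbation finitely many times over the finitely many cycles involved.
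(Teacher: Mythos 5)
Your overall architecture (open by Remark \ref{*remsup}, dense by a perturbation of a $\PCC^k$ approximant, use Theorem \ref{contspectrum} for the nondegeneracy $\alpha_{f,\min}^* < \alpha_{f,\max}^*$) is the same as the paper's, and the openness step is correct. The density step, however, uses a different perturbation than the paper and it has a genuine gap.

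You propose to \emph{subtract} a small amount $\eta$ on one maximizing $k$-cylinder $v$, chosen so that $\alpha_{f,\max}$ is still attained elsewhere. The problem is the self-loop case: suppose $f\in\PCC^k(\Omega)$ attains its maximum only on $[1^k]$ (so $\alpha_{f,\max}^*$ is realized by $1^\infty$ and the unique maximizing cycle is the self-loop at $[1^k]$). Then there is no other $k$-cylinder on which the maximum is attained, so subtracting from $[1^k]$ lowers $\alpha_{f,\max}$ by the full amount $\eta$, and since $1^\infty$ still realizes the Birkhoff max, $\alpha_{f',\max}^*$ also drops by exactly $\eta$ — the two stay equal. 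Iterating over cycles does not help here because the only vertex there is the self-loop, and you are forced to leave it unperturbed to preserve $\alpha_{f,\max}$. Your proposal never confronts this case, and it is precisely the case the theorem's proof must handle (a short inspection shows that when $\alpha_{f,\max}^*=\alpha_{f,\max}$ with the maximizing cycle not a self-loop, the inequality can be achieved easily; the hard case is exactly $[1^k]$ or $[0^k]$).

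The paper's perturbation goes in the opposite direction and is cleaner: it \emph{adds} $\eee\ind_{[A]}$ where $A$ is a non-constant $k$-string on which $f$ is maximal, so $\alpha_{g,\max}$ rises by the full $\eee$, but any periodic orbit visits $[A]$ with asymptotic frequency strictly below $1$ (below $1-1/k$, say, because $A$ contains both a $0$ and a $1$), so $\alpha_{g,\max}^*$ rises by strictly less than $\eee$; the separation is created automatically. In the self-loop case the paper passes to a $\PCC^{k+1}$ perturbation with $A=1^k0$, for which the visiting frequency is at most $1/(k+1)$, and again the addition beats the Birkhoff max. If you wish to salvage your subtraction scheme, the fix is also to pass to $\PCC^{k+1}$: regarding $f$ there, the maximum is attained on both $[1^{k+1}]$ and $[1^k0]$, and subtracting $\eta$ on $[1^{k+1}]$ keeps $\alpha_{f',\max}$ attained at $[1^k0]$ while killing the self-loop; since $[1^k0]$ is not a self-loop, every cycle through it also meets a strictly-smaller cylinder, so $\alpha_{f',\max}^*<\alpha_{f',\max}$. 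That step (increasing the depth $k$) is exactly the idea missing from your write-up.
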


\begin{proof}
It suffices to prove that each inequality in (\ref{*afmsz}) holds in a dense open subset of $C(\Omega)$, and due to symmetry, it is sufficient to prove that $\alpha_{f,\min}^*<\alpha_{f,\max}^*$ and $\alpha_{f,\max}^*<\alpha_{f,\max}$ hold in dense open subsets, respectively. Given Remark \ref{*remsup}, it immediately follows that each of these inequalities holds in an open subset, thus we only have to keep an eye on denseness.

Consider first $\alpha_{f,\min}^*<\alpha_{f,\max}^*$. By Theorem \ref{contspectrum} we know that $S_f$ is continuous 
for $f\in G_{1}$
with a dense subset $G_1\subset C(\Omega)$. However, for $\aaa_{\lll}=\int f d\lll$
we have $S_{f}(\aaa_{\lll})=1$, and $S_{f}(\aaa_{f,\min}^{*})=S_{f}(\aaa_{f,\max}^{*})=0$,
hence
\begin{equation}\label{*afszmf1bnew}
\aaa_{f,\min}^{*}<\aaa_{f,\max}^{*}.
\end{equation}
It yields that for any $f\in G_1$ we have $\alpha_{f,\min}^*<\alpha_{f,\max}^*$, thus this inequality holds in a dense subset indeed.

Let us consider now $\alpha_{f,\max}^*<\alpha_{f,\max}$. We know that functions $f\in\PCC(\Omega)$ are dense in $C(\Omega)$. Consider such a function $f$, we have $f\in\PCC^k(\Omega)$ for some $k>0$. By Lemma \ref{periodicmax} we know that there exists a periodic configuration $\omega_f$ with $\lim_{N\to\oo}\frac{1}{N}\sum_{n=1}^N f(\sigma^n \omega_f) = \alpha_{f,\max}^*$. 
If $\aaa_{f,\max}^{*}<\aaa_{f,\max}$ then we are done.
Hence we can suppose that $\aaa_{f,\max}^{*}=\aaa_{f,\max}.$

Assume first that $\omega_f$ can be chosen such that $\omega_f$ is neither identically $1^\infty$ nor $0^\infty$. 
Then we can choose a substring $A$ of length $k$ such that $f$ is maximal on $[A]$ and $A$ is neither $11\cdots1$ nor $00\cdots0$ (i.e. blocks of $k$ many $1$s or $0$s, respectively), actually, by periodicity of $\omega_{f}$ any substring of $A$, different from  $11\cdots1$ and $00\cdots0$ is suitable.
 Now for given $\varepsilon>0$ define $g\in\PCC^k(\Omega)$ such that $g = f + \eee \ind_{[A]}$. Select a periodic $\omega_g$  for which $\lim_{N\to\oo}\frac{1}{N}\sum_{n=1}^N g(\sigma^n \omega_g) = \alpha_{g,\max}^*$, the existence of $\omega_g$ is again guaranteed by Lemma \ref{periodicmax}.
The relative frequency of the substring $A$ in $\omega_g$ is strictly smaller than $1$, as $A$ contains both $0$s and $1$s, hence at least $1/k$ of the substrings start with a binary digit different from the first entry in $A$. Thus we can conclude
\[\lim_{N\to\oo}\frac{1}{N}\sum_{n=1}^N g(\sigma^n \omega_g)-\lim_{N\to\oo}\frac{1}{N}\sum_{n=1}^N f(\sigma^n \omega_f)< \|g-f\|=\eee,\]
hence
\[\alpha_{g,\max}^*-\alpha_{f,\max}^*<\eee.\]
However,
\[\alpha_{g,\max}-\alpha_{f,\max}=\eee\]
by definition. Hence we can find $g$ arbitrarily close to $f$ with $\alpha_{g,\max}^*<\alpha_{g,\max}$ in this case.

Assume now that the only possible choices for $\omega_f$ are amongst $1^\infty$ and $0^\infty$. If $A$ can be chosen as in the first case, differing from the identically 1 and identically 0 strings of length $k$, then the previous argument might be repeated, thus it suffices to observe the cases when $\omega_f$ and $A$ can only be identically 1 or  identically 0. Clearly without loss of generality we can assume that the former one holds. 
In this case we perturb $f$ as follows: let $A = 11\ldots 10$, which is a block consisting of $k$-many $1$'s then followed by a $0$. We define $g\in\PCC^{k+1}(\Omega)$ such that $g = f+ \eee\ind_{[A]}$. Then $\alpha_{g,\max}-\alpha_{f,\max}=\eee$ as previously. 
Moreover, if $\omega'$ is periodic then we compute that
\begin{align*}\label{g-birk}
\lim_{N \to \infty} \frac{1}{N} \sum_{n=1}^N g(\sigma^n \omega') 
&= \lim_{N \to \infty} \frac{1}{N} \sum_{n=1}^N f(\sigma^n \omega') + \lim_{N \to \infty}\frac{\eee}{N} \sum_{n=1}^N \ind_{[A]}(\sigma^n \omega') \\
&\leq \alpha_{f, \max}^* + \eee \cdot  \lim_{N \to \infty}\frac{1}{N} \sum_{n=1}^N \ind_{[A]}(\sigma^n \omega') .
\end{align*}
Note that $ \lim_{N \to \infty}\frac{1}{N} \sum_{n=1}^N \ind_{[A]}(\sigma^n \omega') $, the relative frequency of $A$ in $\omega'$ is at most $\frac{1}{k+1}$ (which is obtained when $\omega' = A^\infty$). This implies that if $\omega_g$ is the maximal periodic configuration for $g$, then 
$$\alpha_{g, \max}^* = \lim_{N \to \infty}\frac{1}{N} \sum_{n=1}^N g(\sigma^n \omega_g) \leq \alpha_{f, \max}^* + \frac{\eee}{k+1} < \alpha_{f, \max}^* + \eee = \alpha_{g, \max}. $$

Thus in both of these two cases we showed that any $f\in\PCC^k(\Omega)$ can be approximated by functions satisfying $\alpha_{g,\max}^*<\alpha_{g,\max}$. It yields that such functions also form a dense set, which concludes the proof.
\end{proof}

\begin{remark}\label{revealed}
	In ergodic optimization, a function $f \in C(\Omega)$ for which $\alpha_{f, \max}^* = \alpha_{f, \max}$ is called \textit{revealed} (cf. \cite[\S5]{J17}). Theorem $\ref{*typsup}$ tells us that the set of revealed functions in $C(\Omega)$ forms a nowhere dense set.
\end{remark}

\section{One-sided derivatives of the Birkhoff spectra at endpoints}\label{Sonesderiv}

In this section 
for functions with continuous spectrum
we are interested in the one-sided derivatives of the spectrum at the endpoints of its support in the direction of the interior of the support.

\subsection
{One-sided derivatives at the endpoints of spectra for generic functions}
 \label{SS-GCID}
For the generic continuous function we have already seen in Theorem \ref{contspectrum} that the spectrum is continuous at these endpoints, and as in the direction of the exterior of $L_f$ the spectrum is constant zero, the one-sided derivative is also zero. 
On the other hand, towards the interior of the support it is of infinite absolute value as we see in the next theorem.

\begin{theorem} \label{typicalderivative}
For the generic continuous function $f\in C(\Omega)$, we have $\partial^- S_f(\alpha_{f,\max}^*)=-\infty$, while $\partial^+ S_f(\alpha_{f,\min}^*)=\infty$.
\end{theorem}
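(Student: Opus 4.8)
\textbf{Proof proposal for Theorem \ref{typicalderivative}.}

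The plan is to show that for each $\theta>0$ the set
\[
W_\theta=\Big\{f\in C(\Omega):\ \partial^- S_f(\alpha_{f,\max}^*)>-\theta\ \text{ or }\ \partial^+ S_f(\alpha_{f,\min}^*)<\theta\Big\}
\]
is of first category; intersecting the complements over a sequence $\theta_n\to 0$ and intersecting with the residual set from Theorem \ref{contspectrum} (so that the spectrum is continuous, whence $S_f(\alpha_{f,\min}^*)=S_f(\alpha_{f,\max}^*)=0$) will give a residual set on which both one-sided derivatives are infinite in absolute value. By symmetry (replacing $f$ by $-f$) it is enough to handle the part of $W_\theta$ coming from the right endpoint, i.e.\ to show that
\[
V_\theta=\Big\{f\in C(\Omega):\ S_f(\alpha)\le \theta(\alpha_{f,\max}^*-\alpha)\text{ fails for }\alpha\text{ just below }\alpha_{f,\max}^*\Big\}^c
\]
contains a dense open set. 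Since it is hard to get exact control on $\alpha_{f,\max}^*$ itself, I would instead fix a small parameter and work with the closed set
\[
V_\theta^{(n)}=\Big\{f\in C(\Omega):\ \exists\,\alpha\in[\alpha_{f,\max}^*-\tfrac1n,\alpha_{f,\max}^*)\text{ with }S_f(\alpha)\ge \theta(\alpha_{f,\max}^*-\alpha)\Big\},
\]
show each $V_\theta^{(n)}$ is nowhere dense, and take the union over $n$; by concavity of $S_f$ on $L_f$, if $f\notin\bigcup_n V_\theta^{(n)}$ then $\partial^- S_f(\alpha_{f,\max}^*)\le -\theta$, and since $\theta$ is arbitrary the derivative is $-\infty$.

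To see $V_\theta^{(n)}$ is nowhere dense it suffices, since it is closed, to approximate an arbitrary $f$ by a $g\notin V_\theta^{(n)}$. I would start from a $\PCC^k$ function $f$ (these are dense) and apply Lemma \ref{perturbation} to get $f+g_1$ with continuous spectrum and $\alpha_{f+g_1,\max}^*$ close to $\alpha_{f,\max}^*$; moreover, inspecting the proof of Lemma \ref{perturbation}, the level set $E_{f+g_1}(\alpha_{f+g_1,\max}^*)$ is contained in the zero-dimensional set $H_{\omega^*}$ associated to a single periodic point $\omega^*$. The key quantitative input is that near the right endpoint the spectrum of $f+g_1$ is controlled by how many cylinders of length $N$ contain a point whose length-$N$ Birkhoff average is within $\eta$ of $\alpha_{f+g_1,\max}^*$; because the Birkhoff average of a $\PCC^k$ function is, up to $O(k\|f\|/N)$, the average along a closed walk in the de Bruijn-type graph $G$ from the proof of Lemma \ref{periodicmax}, forcing the average to exceed $\alpha^*-\delta$ forces the walk to spend a $(1-c\delta/\operatorname{gap})$-fraction of its time on the (finitely many) optimal cycles, where $\operatorname{gap}$ is the difference between the top cycle average and the next one. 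This gives a bound of the shape: the number of admissible length-$N$ cylinders is at most $2^{N\psi(\delta)}$ with $\psi(\delta)\to 0$ as $\delta\to0$, hence $S_{f+g_1}(\alpha^*-\delta)\le \psi(\delta)$, and by tuning the perturbation (making the optimal cycle unique and the gap large, exactly as in the construction in Theorem \ref{*ppropdisc} where one lengthens a distinguished block) one can arrange $\psi(\delta)\le \tfrac{\theta}{2}\delta$ for all small $\delta$. Then this $g=f+g_1$ lies outside $V_\theta^{(n)}$, and by Theorem \ref{Thm:NormCont} (norm continuity of the spectrum away from the endpoints) together with Remark \ref{*remsup} (the endpoints move continuously) a whole norm-ball around $g$ stays outside $V_\theta^{(n)}$.

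The main obstacle is the quantitative slope estimate $S_{f+g_1}(\alpha^*-\delta)\le \tfrac\theta2\,\delta$: it requires turning the combinatorial "fraction of time on optimal cycles" bound into a Hausdorff dimension bound uniform down to $\delta=0$, and—more delicately—arranging via the perturbation that the spectral gap in the cycle-average problem is large relative to the entropy the near-optimal walks can carry, so that $\psi$ is genuinely linear with small slope rather than merely $o(1)$. I expect this to parallel the block-lengthening trick already used in Theorem \ref{*ppropdisc}, now applied in reverse to suppress rather than create dimension at the endpoint; the uniformity over $N$ follows from Lemma \ref{uniformity}, which guarantees that once $N$ is large the finite-$N$ averages do not exceed $\alpha^*+\varepsilon$, so the counting estimate controls $\mathcal H^{s}_{2^{-N}}(E_{f+g_1}(\alpha^*-\delta))$ directly. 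Once the nowhere-density of each $V_\theta^{(n)}$ is in hand, the assembly into a residual set and the passage $\theta\to0$ are routine.
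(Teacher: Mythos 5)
Your plan runs in the wrong direction, and I see two independent problems, either of which is fatal.

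First, the set-theoretic framing is reversed. With $S_f(\alpha_{f,\max}^*)=0$ (continuous spectrum), the condition $S_f(\alpha_0)\ge\theta(\alpha_{f,\max}^*-\alpha_0)$ for some $\alpha_0<\alpha_{f,\max}^*$ means the chord from $(\alpha_0,S_f(\alpha_0))$ to $(\alpha_{f,\max}^*,0)$ has slope $\le-\theta$, and by concavity this forces $\partial^-S_f(\alpha_{f,\max}^*)\le-\theta$. So your $V_\theta^{(n)}$ is contained in the \emph{good} set (steep slope), and $\bigcup_n V_\theta^{(n)}$ is essentially $\{f:\partial^-S_f(\alpha_{f,\max}^*)\le-\theta\}$. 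Showing these are nowhere dense and concluding the complement has steep slope gets the implication backwards: if $f\notin\bigcup_n V_\theta^{(n)}$ then $S_f(\alpha)<\theta(\alpha_{f,\max}^*-\alpha)$ near $\alpha_{f,\max}^*$, which gives $\partial^-S_f(\alpha_{f,\max}^*)\ge-\theta$, i.e.\ \emph{shallow} slope. Your approximation step is internally consistent with this reversal — you construct $g=f+g_1$ with $S_g(\alpha_{g,\max}^*-\delta)\le\frac\theta2\delta$ to escape $V_\theta^{(n)}$ — so the strategy, if it succeeded, would make the shallow-slope set residual, which is the negation of Theorem \ref{typicalderivative}.

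Second, the quantitative bound $\psi(\delta)\le\frac\theta2\delta$ is unattainable, and your own cycle-counting already shows why. Forcing a walk of length $N$ to have Birkhoff average within $\delta$ of $\alpha_{f,\max}^*$ lets it spend a fraction $\rho\asymp\delta/\mathrm{gap}$ of its steps off the optimal cycle(s), and merely choosing \emph{where} those off-cycle steps occur already contributes $\binom{N}{\rho N}\approx 2^{NH(\rho)}$ cylinders with $H(\rho)\sim\rho\log_2(1/\rho)$. Thus any upper bound you can extract from this counting has $\psi(\delta)\gtrsim (\delta/\mathrm{gap})\log(1/\delta)$, whose slope blows up as $\delta\to 0$; you cannot push it below a linear function of small fixed slope, no matter how you choose $\mathrm{gap}$. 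This is precisely the mechanism behind Theorem \ref{pccinfiniteder}, which proves that every $\PCC$ function with continuous spectrum already has $\partial^-S_f(\alpha_{f,\max}^*)=-\infty$. The only function in the paper with finite endpoint slope (Theorem \ref{finitederivatives}) is a genuinely non-$\PCC$ construction with infinitely many scales $L_j$ chosen to decay geometrically with $t_j$; it is not in reach of the ``$\PCC$ plus Lemma \ref{perturbation} perturbation'' you are proposing, and such functions are certainly not a dense open set.

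The paper's actual route is softer and avoids any cylinder counting. It proves Lemma \ref{derivativelemma}: given $f_0$, $\varepsilon$, $\nu$, one first chooses (Theorem \ref{*ppropdisc}) $f_1\in\PCC$ within $\varepsilon/4$ of $f_0$ with \emph{discontinuous} spectrum, $\varepsilon_1:=S_{f_1}(\alpha_{f_1,\max}^*)>0$, then (Theorem \ref{contspectrum}) an $f_2$ within $\varepsilon_2/(10\nu)$ of $f_1$ with \emph{continuous} spectrum. Lemma \ref{Lemma:ptEstimate} produces $\alpha'$ with $\alpha_{f_2,\max}^*-\alpha'<\varepsilon_2/(5\nu)$ and $S_{f_2}(\alpha')\ge\varepsilon_1\ge\varepsilon_2$, while $S_{f_2}(\alpha_{f_2,\max}^*)=0$; the ratio gives a chord of slope $<-\nu$. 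Theorem \ref{Thm:NormCont} and Remark \ref{*remsup} then stabilize this over a ball $B(f_2,\delta)$, and Remark \ref{derivativeremark} (concavity) upgrades the chord estimate to $\partial^-S_f(\alpha_{f,\max}^*)<-\nu$. The residual set is $\bigcap_\nu\{f:\partial^-S_f(\alpha_{f,\max}^*)<-\nu\}$. In short: the paper obtains a \emph{lower} bound on $S_f$ just inside the endpoint (inherited from a nearby discontinuous spectrum via norm continuity), exactly the direction your counting argument cannot supply.
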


We start with a lemma which will be the building block for the proof of the above theorem.

\begin{lemma} \label{derivativelemma}
Let $f_0 \in C(\Omega)$, $\varepsilon > 0$, and $\nu \in \NN$ be given. 
Then there exists $f_2 \in C(\Omega)$ and $\delta > 0$ such that $\norm{f_0 - f_2} <\varepsilon/2$, $\delta <\varepsilon/2$, and for any 
$f \in B(f_2, \delta)\sse B(f_{0},\eee)$
there exists
$\aaa'<\aaa_{f,\max}^{*}$
such that 
\begin{equation}\label{*DL1a*}
\frac{S_{f}(\aaa')-S_{f}(\aaa_{f,\max}^{*})}{\aaa'-\aaa_{f,\max}^{*}}<-\nu.
\end{equation}
\end{lemma}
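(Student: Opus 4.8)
The plan is to start from $f_0$, first replace it by a nearby $\PCC$ function $f_1 \in \PCC^k(\Omega)$ with $\|f_0-f_1\|<\varepsilon/4$, and then perturb $f_1$ in the spirit of Lemma \ref{perturbation} so that the resulting $f_2$ has a continuous spectrum near $\alpha_{f_2,\max}^*$ with a very steep left-hand slope there. The key observation is that if the maximizing measure at $\alpha_{f,\max}^*$ is (close to) a periodic orbit measure, then points in $E_f(\alpha')$ for $\alpha'$ slightly below $\alpha_{f,\max}^*$ must spend almost all their time shadowing that orbit, so there is very little "room" for entropy; this forces $S_f(\alpha')$ to be small and hence \eqref{*DL1a*} holds once $\alpha'$ is close enough to the endpoint. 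Concretely, I would take $\omega^*$ periodic with $\frac1p\sum_{n=1}^p f_1(\sigma^n\omega^*)=\alpha_{f_1,\max}^*$ (Lemma \ref{periodicmax}), let $g_0(\omega)=\min_{i}d(\omega,\sigma^i\omega^*)$, and set $f_2 = f_1 - \eta g_0 + (\text{constant})$ for a suitably small $\eta$ depending on $\nu$ and on the entropy of cylinders around $\omega^*$; as in Lemma \ref{perturbation}, $\alpha_{f_2,\max}^* = \alpha_{f_1,\max}^* + c$ and $E_{f_2}(\alpha_{f_2,\max}^*)\subset H_{\omega^*}$ has dimension $0$, so $S_{f_2}(\alpha_{f_2,\max}^*)=0$.

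The core estimate is to bound $S_{f_2}(\alpha')$ from above for $\alpha'\in(\alpha_{f_2,\max}^*-\rho,\alpha_{f_2,\max}^*)$ with $\rho$ small. If $\omega\in E_{f_2}(\alpha')$ then, writing $f_2 = f_1-\eta g_0 + c$ and using that $f_1\le \alpha_{f_1,\max}^*$ on its "good" cylinders and is strictly smaller elsewhere (by a gap $\gamma>0$ coming from $f_1\in\PCC^k$ having only finitely many values), a counting argument shows the time-frequency with which $\sigma^n\omega$ lies far from the orbit of $\omega^*$ is $O(\rho/\gamma)$ as well as the time-frequency with which $g_0(\sigma^n\omega)\ge 2^{-p}$ is $O(\rho/\eta)$. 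Hence $E_{f_2}(\alpha')$ is covered, at scale $2^{-N}$, by cylinders that agree with $\omega^*$ on a $(1-O(\rho))$-fraction of coordinates, and a standard entropy/Stirling estimate (exactly the map-construction from the proof of Lemma \ref{perturbation}, combined with the Eggleston-type bound in Example \ref{examplecontspect}) gives
\[
S_{f_2}(\alpha') \le \psi\!\left(\frac{C\rho}{\min\{\eta,\gamma\}}\right),
\]
where $\psi(t)\to 0$ as $t\to 0^+$ and $\psi(t)/t$ is bounded near $0$ (e.g. $\psi(t)=-t\log t$ type, or simply $\psi$ linear). Choosing first $\eta$ small enough (but fixed) so that $S_{f_2}(\alpha_{f_2,\max}^*)=0$ and the construction goes through, and then observing that for $\alpha' = \alpha_{f_2,\max}^*-\rho$ the quotient in \eqref{*DL1a*} is $\le -\psi(C\rho/\min\{\eta,\gamma\})/\rho$, we can make this $<-\nu$ by taking $\rho$ small; note $\psi(C\rho/m)/\rho \to 0$ as $\rho\to 0$, so in fact \eqref{*DL1a*} holds for all sufficiently small $\rho$, giving some admissible $\alpha'$.

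Finally, to get the full statement with a ball $B(f_2,\delta)$: by the Norm Continuity Theorem (Theorem \ref{Thm:NormCont}) and Remark \ref{*remsup}, if $\|f-f_2\|<\delta$ then $\alpha_{f,\max}^*$ is within $\delta$ of $\alpha_{f_2,\max}^*$ and $S_f$ is within $\varepsilon'$ of $S_{f_2}$ on the relevant interior interval; so for a fixed $\alpha'$ slightly below $\alpha_{f_2,\max}^*$ where $S_{f_2}$ is very small, $S_f(\alpha')$ stays small while $S_f(\alpha_{f,\max}^*)=0$ for the generic-type functions in the ball — but since we cannot assume $S_f(\alpha_{f,\max}^*)=0$ for \emph{all} $f$ in the ball, instead we use that $S_f(\alpha_{f,\max}^*)\le S_{f_2}(\alpha') + \varepsilon'$ can be forced small and the quotient $(S_f(\alpha')-S_f(\alpha_{f,\max}^*))/(\alpha'-\alpha_{f,\max}^*)$ is controlled by replacing $\alpha_{f,\max}^*$ by a nearby point; choosing $\delta$ small relative to $\nu$ and $\rho$ makes the perturbed quotient still $<-\nu$. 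Require also $\delta<\varepsilon/4$ so that $B(f_2,\delta)\subset B(f_0,\varepsilon)$. The main obstacle is the counting/entropy estimate establishing $S_{f_2}(\alpha')\le\psi(C\rho/m)$ with the right rate as $\rho\to 0^+$; once that is in hand, everything else is a soft perturbation argument using the already-established Theorems \ref{Thm:NormCont} and \ref{contspectrum} and the construction in Lemma \ref{perturbation}.
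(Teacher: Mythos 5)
There is a genuine gap, and it is a sign/direction error at the heart of your core estimate. Since $\alpha'<\alpha_{f,\max}^*$ and (for the $f_2$ you build) $S_{f_2}(\alpha_{f_2,\max}^*)=0$, the quotient in \eqref{*DL1a*} equals $-S_{f_2}(\alpha')/(\alpha_{f_2,\max}^*-\alpha')$, so to make it $<-\nu$ you need $S_{f_2}(\alpha')$ to be \emph{large} relative to $\rho:=\alpha_{f_2,\max}^*-\alpha'$, i.e.\ a \emph{lower} bound on $S_{f_2}(\alpha')$. Your entropy/covering argument produces an \emph{upper} bound $S_{f_2}(\alpha')\le\psi(C\rho/m)$, which can only constrain the slope from the other side and cannot yield \eqref{*DL1a*}. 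The confusion is visible even internally: you write that the quotient is $\le-\psi(C\rho/m)/\rho$ (this direction would require a lower bound on $S_{f_2}(\alpha')$), and then assert both that ``$\psi(t)/t$ is bounded near $0$'' and that ``$\psi(C\rho/m)/\rho\to0$,'' from which you try to conclude the quotient can be made $<-\nu$ --- but a quantity tending to $0$ cannot be forced below $-\nu$. Even if you flipped to a lower bound, what you would actually need is $S_{f_2}(\alpha')\gtrsim -\rho\log\rho$ (so that $S_{f_2}(\alpha')/\rho\to\infty$); that is a nontrivial construction of a large Cantor-like set in $E_{f_2}(\alpha')$, not a cover-counting estimate, and it is not carried out here.

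The paper's proof sidesteps all of this with a short indirect argument worth internalizing. Use Theorem \ref{*ppropdisc} to find $f_1\in\PCC(\Omega)$ with $\|f_0-f_1\|<\varepsilon/4$ and a positive jump $\varepsilon_1=S_{f_1}(\alpha_{f_1,\max}^*)>0$ at the endpoint; then use Theorem \ref{contspectrum} to choose $f_2$ with continuous spectrum and $\|f_1-f_2\|<\varepsilon_2/(10\nu)$ where $\varepsilon_2=\min\{\varepsilon_1,\varepsilon/2,1/2\}$. By Lemma \ref{Lemma:ptEstimate} and Remark \ref{*remsup}, the value $\ge\varepsilon_2$ survives at some $\alpha'$ within $\varepsilon_2/(10\nu)$ of $\alpha_{f_1,\max}^*$, while $\alpha_{f_2,\max}^*$ is also within $\varepsilon_2/(10\nu)$ of $\alpha_{f_1,\max}^*$, so $\alpha_{f_2,\max}^*-\alpha'<\varepsilon_2/(5\nu)$. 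Because $S_{f_2}(\alpha_{f_2,\max}^*)=0$, the slope from $(\alpha',\ge\varepsilon_2)$ to $(\alpha_{f_2,\max}^*,0)$ is already $<-\nu$; Theorem \ref{Thm:NormCont} and Remark \ref{*remsup} then propagate this to a ball $B(f_2,\delta)$. The point is that no explicit entropy counting is needed --- the steepness comes for free from having a definite jump squeezed into a tiny interval after a tiny perturbation, and the only quantitative inputs are the norm-continuity of the spectrum on the interior and the stability of $\alpha_{\cdot,\max}^*$.
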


\begin{remark} \label{derivativeremark}
As $S_f$ is concave on the interval $L_f$, the inequality (\ref{*DL1a*}) in the lemma implies $\partial^{-}S_f(\alpha_{f,  \max}^*)<-\nu$. 
\end{remark}

\begin{proof}
Using Theorem \ref{*ppropdisc} choose $f_1\in PCC(\OOO)$ with $\|f_0-f_1\|<\eee/4$
such that $\eee_1=S_{f_1} (\aaa_{f_1,\max}^{*})>0$.

Set $\eee_2=\min\{ \eee_1,\frac{\eee}{2},1/2 \}$.

Using Theorem \ref{contspectrum} choose $f_2\in C(\OOO)$
such that $$\|f_1-f_2\|<\frac{\eee_2}{10\nu}\text{ and }S_{f_2}(\aaa_{f_2,\max}^{*})=0.$$
By Lemma \ref{Lemma:ptEstimate} and Remark \ref{*remsup} 
applied to $f_{1}$ and $f_{2}$ we obtain that
$\aaa_{f_2,\max}^{*}<\aaa_{f_1,\max}^{*}+\frac{\eee_2}{10\nu}$
and there exists $\aaa'>\aaa_{f_1,\max}^{*}-\frac{\eee_2}{10\nu}$  such that
\begin{equation}\label{*Sfk*}
S_{f_2}(\aaa')\geq S_{f_1}(\aaa_{f_1,\max}^{*})=\eee_1\geq \eee_2.
\end{equation} 
Then
\begin{equation}\label{*DL2*a}
\aaa_{f_2,\max}^{*}-\aaa'<2\cdot \frac{\eee_2}{10\nu}.
\end{equation}
Keep in mind that $S_{f_2}(\aaa_{f_2,\max}^{*})=0$ and choose $\ddd_1 >0$
such that 
\begin{equation}\label{*ept*}
S_{f_2}(\aaa)<\frac{\eee_2}{20}\text{ holds for }\aaa\in (\aaa_{f_2,\max}^{*}-\ddd_1 ,\aaa_{f_2,\max}^{*}].
\end{equation}
Observe that from \eqref{*Sfk*} it also follows that
$\aaa_{f_2,\min}^{*}\leq \aaa'<\aaa_{f_2,\max}^{*}-\ddd_{1}.$
Now choose $\ddd_2>0$  such that 
\begin{equation}\label{*ddchoice*}
\ddd_2<\min \Big \{ \frac{\aaa_{f_2,\max}^{*}-\aaa'}{10}, \frac{\ddd_1 }{5},\frac{\eee_2}{20\nu} \Big  \}.
\end{equation}
Using this $\ddd_{2}$ as $\eee$ in Theorem \ref{Thm:NormCont}
select $\ddd\in (0,\ddd_{2})$  such that for $f\in B(f_{2},\ddd)$
we have
\begin{equation}\label{*Sfkest*}
|S_{f}(\aaa)-S_{f_2}(\aaa)|<\ddd_{2} \text{ for }\aaa\in (\aaa_{f_2,\min}^{*}+\ddd_{2},\aaa_{f_2,\max}^{*}-\ddd_{2}).
\end{equation}

Suppose $f\in B(f_2,\ddd)$. 
Then by Lemma \ref{Lemma:ptEstimate},
 Remark \ref{*remsup}, \eqref{*DL2*a} and \eqref{*ddchoice*}
we obtain
$$|\aaa_{f,\max}^{*}-\aaa_{f_2,\max}^{*}|<\ddd_{2} \text{ and hence }|\aaa'-\aaa_{f,\max}^{*}|<1.1(\aaa_{f_{2},\max}^{*}-\aaa')< 1.1\cdot \frac{\eee_2}{5\nu}.$$

By \eqref{*ept*}, $S_{f_{2}}(\alpha_{f_2,\max}^*-\ddd_{1}/2)< \eee_{2}/20$ and then by \eqref{*Sfkest*},
$S_{f}(\alpha_{f_2,\max}^*-\ddd_{1}/2)< \eee_{2}/10<1$. By concavity of $S_{f}$ and $S_{f}(\int f)=1$
it is clear that $S_{f}$ is monotone decreasing on $[\aaa_{f,\max}^{*}-\ddd_{1}/2,
\aaa_{f,\max}^{*}]$ and hence 
\begin{equation}\label{*sfmaxest*}
S_{f}(\aaa_{f,\max}^{*})<\frac{\eee_2}{10}.
\end{equation}

Using  \eqref{*Sfk*}, \eqref{*ddchoice*} and  \eqref{*Sfkest*} we infer
$$S_{f}(\aaa')>S_{f_2}(\aaa')-\ddd_{2}\geq 0.9 \eee_2.$$
By this, \eqref{*sfmaxest*} and \eqref{*DL2*a}
$$\frac{S_{f}(\aaa')-S_{f}(\aaa_{f,\max}^{*})}{\aaa'-\aaa_{f,\max}^{*}}<-\frac{0.8\eee_2}{1.1\cdot \frac{\eee_2}{5\nu}}<-\nu.$$ 
\end{proof}


\begin{remark}\label{*rem412}
We   remark that  due to symmetry reasons a version of Lemma \ref{derivativelemma}
also holds at the other endpoint, $\aaa_{f,\min}^{*}$ of the spectrum yielding that for any
$f \in B(f_2, \delta)\sse B(f_{0},\eee)$
there exists
$\aaa'>\aaa_{f,\min}^{*}$
such that 
\begin{equation}\label{*DL1a*min}
\frac{S_{f}(\aaa')-S_{f}(\aaa_{f,\min}^{*})}{\aaa'-\aaa_{f,\min}^{*}}>\nu.
\end{equation}
\end{remark}

As we observed earlier in the one-dimensional case $S_{f}$ is continuous
on $[\aaa_{f,\min}^{*},\aaa_{f,\max}^{*}]$ hence even in case of discontinuous spectra
one can consider $\partial^{-}S_f(\alpha_{f,  \max}^*)$ and
$\partial^{+}S_f(\alpha_{f,  \max}^*)$, one might have a one-sided discontinuity
only in the direction pointing towards the exterior of the support of the spectrum.

Lemma \ref{derivativelemma} easily implies Theorem \ref{typicalderivative}:


\begin{proof}[Proof of Theorem $\ref{typicalderivative}$]
Consider an arbitrary $f_{0}\in C(\Omega)$ and $\varepsilon>0$. 
Fix $\nu\in\mathbb{N}$. 
We may apply Lemma \ref{derivativelemma} and Remark \ref{derivativeremark} to see that $B(f_{0},\varepsilon)$ contains a smaller open set $B(f_2,\delta)$ of $C(\Omega)$ such that for any $f\in B(f_2,\delta)$ we have $\partial^{-}S_f(\alpha_{f,  \max}^*) < -\nu$. 
It implies that the complement of
\[A_\nu=\{f\in C(\Omega) : \partial^{-}S_f(\alpha_{f,  \max}^*) < -\nu\}\]
is nowhere dense for any $\nu$. 
Hence $A=\bigcup_{\nu=1}^{\infty}A_\nu$ is a residual set of $C(\Omega)$, yielding that for the generic continuous function $f \in C(\Omega)$, we have $\partial^- S_f(\alpha_{f,\max}^*)=-\infty$.

However, by Remark \ref{*rem412} we may conclude the same way that for the generic continuous function $f \in C(\Omega)$, we have $\partial^+ S_f(\alpha_{f,\min}^*)=\infty$. 
Thus for the generic continuous function, we have both of these prescribed equalities, which concludes the proof.
\end{proof}

\subsection{Finite one-sided derivatives at the endpoints of the spectrum} \label{SS-CFFD}
Now our goal is to construct a continuous function $f$ with the property that the spectrum $S_f$ is continuous, but it is not generic in the above sense, that is the one-sided derivatives in the endpoints $\alpha_{f,\min}^*$ and $\alpha_{f,\max}^*$ are finite. 

\begin{theorem} \label{finitederivatives} 
There exists $f\in C_0(\Omega)$ such that $S_f$ is continuous, $\alpha_{f,\min}^*=-1$ and $\alpha_{f,\max}^*=1$, and $\partial^- S_f(\alpha_{f,\max}^*)>-\infty$, while $\partial^+ S_f(\alpha_{f,\min}^*)<\infty$. 
Moreover, these derivatives can be arbitrarily close to $-1$ and $1$, respectively.
\end{theorem}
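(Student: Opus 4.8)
The plan is to build $f$ as a uniform limit of PCC functions $f=\sum_{j\ge 1} g_j$, where each $g_j$ is supported (i.e.\ nonzero) only on a very sparse, carefully chosen collection of long cylinders, and $\|g_j\|\to 0$ fast enough that $f\in C_0(\Omega)$. The key heuristic is the obstruction recorded in Theorem \ref{pccinfiniteder}: a single PCC function with continuous spectrum is forced to have infinite one-sided derivatives, essentially because the extremal Birkhoff average $\alpha^*_{f,\max}$ is realized by a periodic orbit and then ``spreading mass'' off that orbit costs very little in average but a definite amount in entropy. To get a \emph{finite} derivative at $\alpha^*_{f,\max}=1$ we must instead arrange that reaching averages near $1$ genuinely requires averages near $1$ on longer and longer scales, so that the entropy available at average $\alpha=1-\eta$ shrinks \emph{linearly} (not like $-\eta\log\eta$) as $\eta\to 0$. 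Concretely, I would design the allowed configurations so that a configuration with Birkhoff average $\ge 1-\eta$ must, on a block of length $\sim 1/\eta$, look like one of roughly $2^{c/\eta}$ admissible blocks, giving entropy $h\le c'\eta\log 2$ at average $1-\eta$, i.e.\ $S_f(1-\eta)\le C\eta$, hence $\partial^- S_f(1)\ge -C$; and symmetrically at $-1$. The lower bound $\partial^- S_f(1)\le$ (something close to $-1$) comes from exhibiting an explicit subshift of the right dimension inside $E_f(1-\eta)$ for each small $\eta$, using a Hutchinson/self-similarity computation as in Examples \ref{*propdisc} and the proof of Theorem \ref{*ppropdisc}.

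The construction itself I would carry out as follows. Fix a rapidly increasing sequence of scales $k_1\ll k_2\ll\cdots$. On scale $k_j$ one designates a special periodic word $A_j$ (of length dividing $k_j$) whose Birkhoff average equals the current maximum, together with a ``penalty'' mechanism: whenever a configuration wants to use $A_j$-type blocks densely, it must periodically insert a short flawed block (as with $B$ versus $A$ in the proof of Theorem \ref{*ppropdisc}), and one tunes the values of $g_j$ so that this insertion is \emph{forced} whenever the average is to stay above $1-\eta$ for $\eta$ in a range like $(k_{j+1}^{-1},k_j^{-1}]$. The point is that at scale $1/\eta$ the number of admissible patterns of flaw-positions is only exponential in $1/\eta$ with a small constant, so the entropy is $O(\eta)$. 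By making $\|g_j\|$ decay fast (say $\|g_j\|<2^{-j}\eee_0$ with $\int g_j=0$, using the averaging trick $g=-\eee g_0+\eee\int g_0\,d\lambda$ from Lemma \ref{perturbation} to stay in $C_0$), $f$ is continuous, $\int f=0$; and by Lemma \ref{uniformity}-type uniform estimates together with the periodic-orbit input of Lemma \ref{periodicmax} applied at each finite stage (plus a limiting argument) one checks $\alpha^*_{f,\max}=1$, $\alpha^*_{f,\min}=-1$. Continuity of $S_f$ at the endpoints is immediate once the upper bound $S_f(1-\eta)\le C\eta\to 0$ is established. The ``moreover'' clause — derivatives arbitrarily close to $-1$ and $1$ — is handled by choosing the alphabet of admissible flaw-patterns as large as possible consistent with still forcing the penalty, pushing the constant $C$ down toward $1$, which by concavity and $S_f\le 1$ is the best one can hope for (this is the ``minimal spectrum'' remark in the introduction: the spectrum is squeezed between the triangle with vertices $(-1,0),(0,1),(1,0)$ and something only slightly fatter).

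The main obstacle is the tension between two requirements that pull in opposite directions: to make $\partial^- S_f(1)$ finite we need the penalty to bite \emph{at every scale simultaneously} — a configuration achieving average $1-\eta$ for arbitrarily small $\eta$ must pay at scale $1/\eta$ no matter which $j$ that $\eta$ falls under — yet the $g_j$'s live on disjoint scales and we must be sure that a clever configuration cannot ``borrow'' high averages from one scale to mask flaws at another, nor push $\alpha^*_{f,\max}$ strictly above $1$ by exploiting the overlap of the supports of different $g_j$. Controlling this requires the scale separation $k_{j+1}/k_j\to\infty$ to be quantitative, and requires a bookkeeping lemma — in the spirit of the ``good part / bad part'' decomposition and the map $\ppp$ in the proof of Theorem \ref{*ppropdisc} — showing that any configuration splits into long admissible $A_j$-blocks (contributing exactly the controlled average) plus a sublinear-density remainder on which $f$ contributes at most the previous maximum. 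Getting that decomposition to interlock cleanly across infinitely many scales, while keeping the entropy count at scale $1/\eta$ genuinely $O(\eta)$, is the technical heart of the argument; everything else (the $C_0$ normalization, the Hutchinson dimension lower bounds, continuity of $S_f$) is routine by the tools already developed in Sections \ref{*secpreli}--\ref{*secgen}.
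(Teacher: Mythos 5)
The paper's proof takes a markedly simpler and more concrete route than what you propose, and your plan, as written, contains a quantitative slip in the key entropy heuristic and leaves the technical heart (as you acknowledge) unresolved.

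The paper defines $f$ directly by initial run lengths: set $t_j=1-2^{-j}$, fix a rapidly growing sequence $(L_j)$, and let $f(\omega)=t_j$ if $\omega$ starts with a run of $1$s of length in $[L_j,L_{j+1})$, $f(\omega)=-t_j$ if it starts with a run of $0$s of that length, $f(1^\infty)=1$, $f(0^\infty)=-1$, and $f=0$ otherwise. This is already continuous with $\int f=0$, and $\alpha_{f,\max}^*=1$ is immediate (look at the fixed point $1^\infty$; $f\le 1$ gives the other direction). There is no penalty mechanism, no forced insertion of flaw blocks, and no good-part/bad-part decomposition across scales: the reward for long runs is built into the definition. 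The upper bound on $S_f(t_j)$ is then a bare-hands cylinder-counting argument against the monotone majorant $f_j$ (equal to $1$ where $f\ge t_j$, to $0$ where $f\le 0$, to $f$ in between), counting placements of $(j-j_0)$-blocks recursively for $j_0=0,\dots,j-1$. The choice of $(L_j)$ in \eqref{Lchoice} tames the binomial coefficients so the total count is $C(L_1,\dots,L_j)\cdot 2^{(1-t_j+2\tau/2^j)k}$ on $k$-cylinders, giving $S_f(t_j)\le (1+2\tau)2^{-j}$ and hence the left derivative at $1$ is bounded below by $-(1+2\tau)$; concavity and $S_f(0)=1$ give the complementary upper bound $\le -1$. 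No lower bound on $S_f$ near the endpoints is needed; your appeal to Hutchinson-type self-similar subshifts inside $E_f(1-\eta)$ is superfluous for this theorem.

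Two concrete problems with the plan as stated. First, the entropy count is off: a block of length $\sim 1/\eta$ allowing roughly $2^{c/\eta}$ admissible patterns has entropy rate $\sim c\log 2$, a constant, not $O(\eta)$. What you actually need (and what the paper delivers) is that the number of $N$-cylinders meeting $\{N^{-1}\sum_{n<N} f(\sigma^n\omega)\ge 1-\eta\}$ is $\le C\cdot 2^{C'\eta N}$, i.e.\ entropy rate $O(\eta)$; the recursion over scales and the binomial bound $\binom{a}{b}\le (ea/b)^b$ with $L_n$ chosen via \eqref{Lchoice} is precisely what achieves this. Second, the interaction across scales that you flag as the ``main obstacle'' in your $\sum g_j$ formulation is not a side issue one can defer; in your penalty-based construction it is unclear whether $\alpha_{f,\max}^*=1$ at all once the overlapping supports of the $g_j$ are allowed to interact, and the bookkeeping lemma you gesture at (in the spirit of the $\mathbf{p}$-map of Theorem \ref{*ppropdisc}) would need to be proved across infinitely many scales simultaneously. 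The paper's run-length function sidesteps this entirely because $f$ is defined by a single global rule, not as a sum of stage-wise perturbations, so there is nothing to ``borrow'' from one scale to mask flaws at another.
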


The first step in this direction is the following lemma, in which we give upper bounds on a value of the spectrum for a suitably defined function.
Since $S_{f}(\int fd\lll)=1$ if we have a function with continuous spectrum then by concavity of the spectrum  
$\partial^- S_f(\alpha_{f,\max}^*)\leq  - 1/(\alpha_{f,\max}^*-\int fd\lll)  $ and $\partial^+S_f(\alpha_{f,\min}^*)\geq 1/(\int fd\lll-\alpha_{f,\min}^*)$.

In the next Lemma we define a $\PCC$ function with "very small" spectrum.
This type of functions serve as building blocks in the proof of Theorem \ref{finitederivatives}.

\begin{lemma} \label{spectrumbound}
Let $b>a$, and let $f:\Omega\to\mathbb{R}$ be such that $f(\omega)=b$ if the first $L$ coordinates of $\omega$ is 1, otherwise $f(\omega)=a$. 
Moreover, fix $\varepsilon>0$ and $0<\beta<1$. 
Then if $L$ is sufficiently large, then
\begin{equation} \label{spectrumboundeq}
S_f(t)\leq \beta+\varepsilon
\end{equation}
for $t=\beta a+(1-\beta)b$.
\end{lemma}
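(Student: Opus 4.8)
The plan is to invoke the variational principle of Theorem~\ref{theoremFFW} and then bound the metric entropy of every admissible invariant measure by an elementary conditional‑entropy estimate. If $E_f(t)=\emptyset$ there is nothing to prove, since $S_f(t)=\dim_H\emptyset=0\le\beta+\varepsilon$; so assume $E_f(t)\neq\emptyset$, whence $t\in L_f$ and Theorem~\ref{theoremFFW} produces a $\mu\in\mathcal F_f(t)$ with $S_f(t)=h_\mu/\log 2$. Writing $f=a+(b-a)\ind_{[1^L]}$, the condition $\int f\,d\mu=t=\beta a+(1-\beta)b$ is equivalent to the single scalar constraint $\mu([1^L])=1-\beta$.

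Next I would estimate $h_\mu$ using the generating cylinder partition $\mathcal P=\{[0],[1]\}$: since $\mathcal P$ generates the Borel $\sigma$-algebra of $\Omega$, the Kolmogorov--Sinai theorem (cf.~\cite{Walters}) gives $h_\mu=h_\mu(\sigma,\mathcal P)=\inf_n\frac1n H_\mu(\mathcal P^{(n)})\le\frac1L H_\mu(\mathcal P^{(L)})$, where $\mathcal P^{(L)}:=\bigvee_{i=0}^{L-1}\sigma^{-i}\mathcal P$ is the partition of $\Omega$ into cylinders of length $L$. The key point is that $[1^L]$ is itself one atom of $\mathcal P^{(L)}$. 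Letting $\mathcal Q:=\{[1^L],\,\Omega\setminus[1^L]\}$, which $\mathcal P^{(L)}$ refines, one obtains
\[
H_\mu(\mathcal P^{(L)})=H_\mu(\mathcal Q)+H_\mu\big(\mathcal P^{(L)}\mid\mathcal Q\big)\le\log 2+\beta L\log 2 ,
\]
because $H_\mu(\mathcal Q)\le\log 2$, while conditionally on $[1^L]$ the partition $\mathcal P^{(L)}$ is trivial and conditionally on $\Omega\setminus[1^L]$ it has at most $2^L-1$ atoms, so $H_\mu(\mathcal P^{(L)}\mid\mathcal Q)\le\mu(\Omega\setminus[1^L])\log 2^L=\beta L\log 2$. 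Hence $h_\mu\le\big(\tfrac1L+\beta\big)\log 2$, i.e. $S_f(t)=h_\mu/\log 2\le\beta+\tfrac1L$, and choosing $L\ge 1/\varepsilon$ yields \eqref{spectrumboundeq}.

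I do not expect a genuine obstacle here; the argument is short once the variational formula is available, and the only points meriting a word of care are the degenerate case $E_f(t)=\emptyset$ (covered by the convention $\dim_H\emptyset=0$) and the fact that $\mathcal P$ is a one‑sided generator, which is what legitimizes $h_\mu\le\frac1L H_\mu(\mathcal P^{(L)})$. If one prefers to stay closer to the constructive definition of $\Lambda_f$ in \cite[Proposition~5]{Fan_Feng_Wu_Recurrence}, the same bound follows by counting length‑$N$ words whose empirical frequency of the block $1^L$ lies near $1-\beta$: such a word has at most roughly $\beta N$ coordinates outside its maximal runs of $1$'s of length $\ge L$, and only $O(\beta N/L)$ such long runs, so the number of these words is $2^{\beta N}$ times a factor that becomes subexponential in $N$ once $L$ is large. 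That bookkeeping is messier, so the entropy argument is the one I would write up.
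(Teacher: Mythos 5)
Your proof is correct, and it takes a genuinely different route from the paper's. The paper proves Lemma~\ref{spectrumbound} by a direct covering argument: it replaces $\lim$ by $\liminf$, reduces to bounding $\dim_H A_m$ where $A_m=\bigcap_{N\ge m}\{\frac1N\sum_{n=0}^{N-1}f(\sigma^n\omega)\ge t^*\}$, and then counts length-$(N+L-1)$ cylinders meeting this set by a combinatorial bookkeeping of the maximal runs of $1$'s (binomial coefficients for the placement of blocks, plus a factor $2^{\beta^*N}$ for the coordinates outside them), which yields a bound on $\mathcal H^{\beta^*+\varepsilon/2}_{2^{-k}}(A_m)$. You instead invoke the variational formula of Theorem~\ref{theoremFFW}, observe that $\int f\,d\mu=t$ forces $\mu([1^L])=1-\beta$, and then bound $h_\mu\le\frac1L H_\mu(\mathcal P^{(L)})$ by splitting through the two-set partition $\mathcal Q=\{[1^L],\Omega\setminus[1^L]\}$; since $\mathcal P^{(L)}$ is trivial on $[1^L]$ and has at most $2^L-1$ atoms on the complement, $H_\mu(\mathcal P^{(L)}\mid\mathcal Q)\le\beta L\log 2$, giving $h_\mu\le(\beta+\frac1L)\log 2$ and hence the claim for $L>1/\varepsilon$. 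All steps check out (the subadditivity of $n\mapsto H_\mu(\mathcal P^{(n)})$ justifies the $\inf$-bound, $\mathcal P$ is a one-sided generator, and the degenerate case $E_f(t)=\emptyset$ is handled by the paper's convention $\dim_H\emptyset=0$). Your argument is shorter and conceptually cleaner, essentially reading the exponent $\beta$ off from one conditional-entropy split. What the paper's more explicit covering estimate buys is that the same machinery (counting placements of $j$-blocks of various lengths) is reused almost verbatim in the proof of Theorem~\ref{finitederivatives}, where $f$ is no longer $\PCC$ and depends on infinitely many scales $L_1<L_2<\cdots$; there the single-cylinder constraint $\mu([1^L])=1-\beta$ has no analogue, so the entropy shortcut does not extend as directly. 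For the lemma as stated, though, your approach is entirely adequate and arguably preferable.
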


\begin{remark}\label{*remspectrbound}
See Figure \ref{fig1} for an illustration of this remark.
Observe that in the above lemma if $L$ is large then $\int fd\lll=b\cdot 2^{-L}+ a(1-2^{-L}) $ and hence $S_{f}(b\cdot 2^{-L}+ a\cdot (1-2^{-L}))=1$.
 The  point  $b\cdot 2^{-L}+ a\cdot (1-2^{-L})$ is very close to
$a=\aaa_{f,\min}$. 
It is also clear that $E_{f}(b)\not=\ess$, since $1^{\oo}$ belongs to it.
By also considering $0^{\oo}$ we see that $[a,b]=[\alpha_{f,\min}^*,\alpha_{f,\max}^*]$. 
Hence the line segment connecting $(b\cdot 2^{-L}+ a\cdot (1-2^{-L}),1)$ to
$(b,0)$ should be under the graph of $S_{f}$ on $[b\cdot 2^{-L}+ a\cdot (1-2^{-L}), b]$.
 If $\bbb$ is small then $t$ is very close to
$b$ and by concavity of the spectrum on $[b\cdot 2^{-L}+ a\cdot (1-2^{-L}),t]$
the graph of $S_{f}$ should be under the dashed line on the figure connecting
$(t,\bbb+\eee)=(\beta a+(1-\beta)b,\bbb+\eee)$ to $(b,0)$.
This implies that for small $\bbb$ and large $L$  apart from a very short interval near the endpoint $a$ the spectrum $S_{f}$ is very close
to the line segment connecting $(a,1)$ to $(b,0)$ and on $[a,b]$ approximates the upper
part of the boundary (shown with dotted line on the figure) of the right angled triangle with vertices $(a,0)$, $(a,1)$ and $(b,0)$. 
\end{remark}

\begin{figure}[h!]
\centering{
\resizebox{0.6\textwidth}{!}{\includegraphics{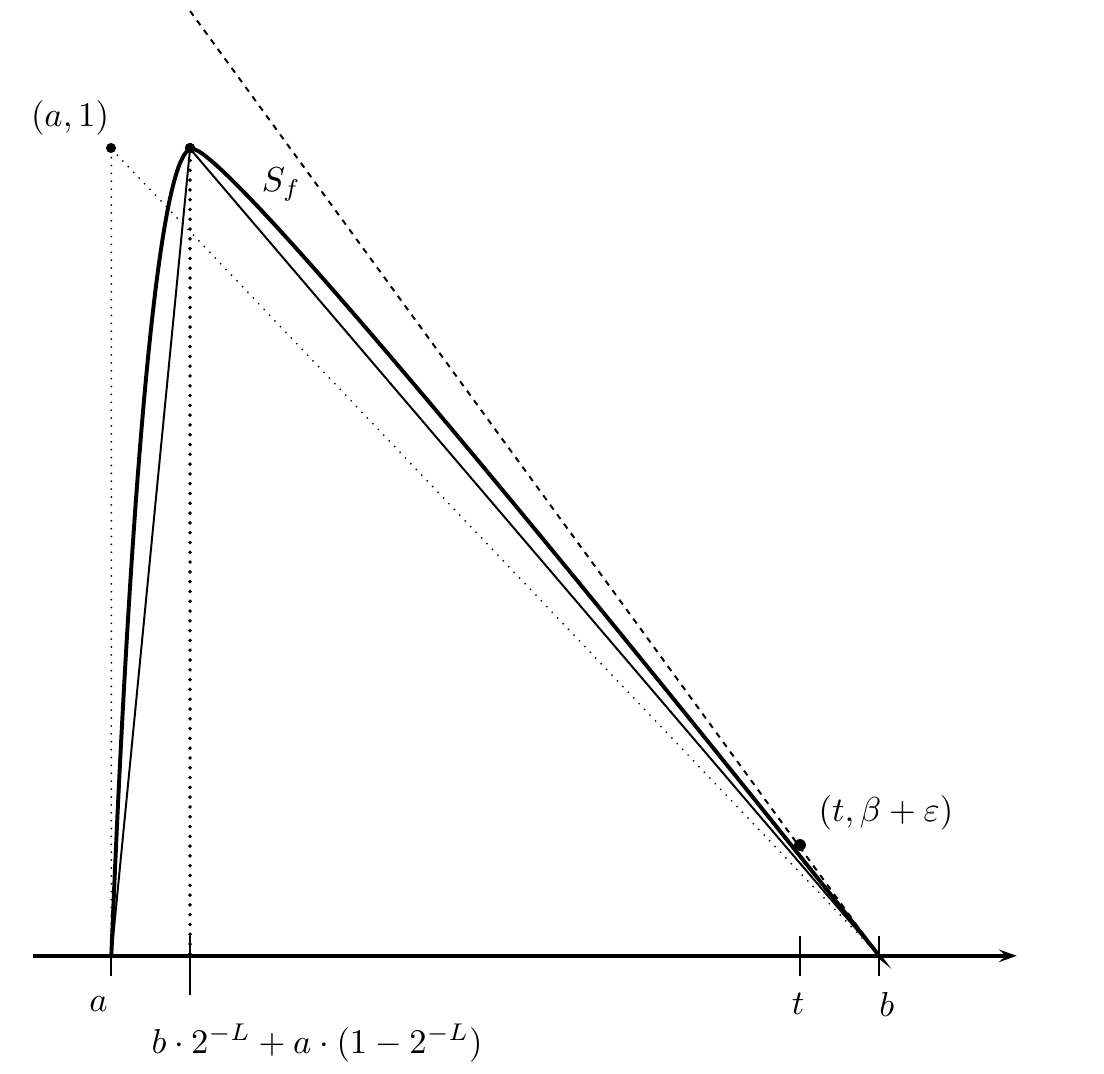}}}
\caption{An illustration of Remark \ref{*remspectrbound}.}
\label{fig1}
\end{figure}


\begin{proof}
Let $t=\beta a+(1-\beta)b$. 
Clearly it suffices to prove the statement of the lemma for small enough $\varepsilon$, thus we might assume that $\beta^*=\beta+\frac{\varepsilon}{2}<1$. 
We would like to estimate the dimension of
\[E_f(t)=\Big \{\omega:\lim_{N\to\infty}\frac{1}{N}\sum_{n=1}^{N}f(\sigma^n\omega)=t\Big \}.\]
This set contains $\omega$ if and only if it contains $\sigma(\omega)$, thus we can shift the sum by one for technical convenience. 
Moreover, if we replace the $\lim$ by a $\liminf$, we can deduce that this set is contained by
\[\Big \{\omega:\liminf_{N\to\infty}\frac{1}{N}\sum_{n=0}^{N-1}f(\sigma^n\omega)\geq t\Big \}.\]
If $\omega$ is in this set, then for large enough $N$ the corresponding ergodic average exceeds $t^*=\beta^* a+(1-\beta^*)b<t$, that is
\begin{equation} \label{setestimate}
E_f(t)\subset \bigcup_{m=1}^{\infty}\bigcap_{N=m}^{\infty}\left\{\ooo:\frac{1}{N}\sum_{n=0}^{N-1}f(\sigma^n\omega)\geq t^*\right\}.
\end{equation}
In the sequel for ease of notation we will use
$\left\{\frac{1}{N}\sum_{n=0}^{N-1}f(\sigma^n\omega)\geq t^*\right\}$
instead of $\left\{\ooo:\frac{1}{N}\sum_{n=0}^{N-1}f(\sigma^n\omega)\geq t^*\right\}.$
The union in \eqref{setestimate} is the union of a growing sequence of sets, thus the dimension is simply the limit of $\dim_H A_m$, where
\[A_m=\bigcap_{N=m}^{\infty}\left\{\frac{1}{N}\sum_{n=0}^{N-1}f(\sigma^n\omega)\geq t^*\right\}.\]
Now we focus on estimating the dimension of this set. 
To this end, we would like to count the cylinder sets of length $N+L-1$ which intersect
 $\left\{\frac{1}{N}\sum_{n=0}^{N-1}f(\sigma^n\omega)\geq t^*\right\}$ for large $N$, as they give a cover of $A_m$ for any $N\geq{m}$. (We are concerned with cylinders of length $N+L-1$ instead of the ones with length $N$ as the first $N+L-1$ coordinates affect $\sum_{n=0}^{N-1}f(\sigma^n\omega)$.) For our purposes it suffices to choose $N$ such that $L|N+L-1$, as we can diverge to infinity with $N$ even under this restriction and we need an upper estimate of the dimension.

The number of blocks consisting of at least $L$ consecutive 1s is at most $\frac{N+L-1}{L}$. 
If $L\geq{2}$, and there are $i$ such blocks, the number of ways to place them among the $N+L-1$ coordinates is at most $\binom{N+L-1}{2i}$, since the placement of each block can be uniquely specified by the coordinates for which the first and the last coordinates of the block occupy. (We note that it is indeed an upper estimate: this expression does not deal with the length of the blocks, neither with the fact that blocks are separated from each other with at least one intermediate coordinate.) Moreover, if $L\geq{5}$, then for the largest possible value of $i$, that is for $i=\frac{N+L-1}{L}$ we still have
\[2i=2\cdot\frac{N+L-1}{L}<\frac{N+L-1}{2}.\]
Thus the number of ways we can arrange the blocks of at least $L$ consecutive $1$s is at most
\begin{equation} \label{blockbound}
\sum_{i=0}^{\frac{N+L-1}{L}}\binom{N+L-1}{2i}\leq \left(\frac{N+L-1}{L}+1\right)\cdot\binom{N+L-1}{2\cdot\frac{N+L-1}{L}}
\end{equation}
$$\leq (N+L-1)\cdot\binom{N+L-1}{2\cdot\frac{N+L-1}{L}},$$
as the binomial coefficients are increasing until the middle ones.

We should also give a bound on the number of ways we can choose the other coordinates. 
Since $\frac{1}{N}\sum_{n=0}^{N-1}f(\sigma^n\omega)\geq t^*$, we know that most of the coordinates belong to one of the above blocks. 
More specifically, in the first $N$ coordinates there are at most $\beta^* N$ not covered by them, as otherwise the number of terms in $\sum_{n=0}^{N-1}f(\sigma^n\omega)$ with $f(\sigma^n\omega)=a$ exceeds $\beta^* N$, which yields that
\[\frac{1}{N}\sum_{n=0}^{N-1}f(\sigma^n\omega)< \beta^* a+(1-\beta^*)b=t^*.\]
Thus a raw upper estimate for the number of the ways we can choose the remaining coordinates in order to have an  $ N+L-1$-cylinder intersecting 
\[\left\{\frac{1}{N}\sum_{n=0}^{N-1}f(\sigma^n\omega)\geq t^*\right\}\]
is $2^{\beta^* N}\cdot 2^{L-1}$, where the last factor is simply the number of ways we can choose the last $L-1$ coordinates.

Combining the results of the preceding two paragraphs yields that 
\[\left\{\frac{1}{N}\sum_{n=0}^{N-1}f(\sigma^n\omega)\geq t^*\right\}\]
is covered by at most
\[(N+L-1)\cdot\binom{N+L-1}{2\cdot\frac{N+L-1}{L}}\cdot 2^{\beta^* N+L-1}\]
many cylinders of diameter $2^{-(N+L-1)}$. 
By using the standard $\binom{a}{b}\leq\left(\frac{ae}{b}\right)^b$ bound on the binomial coefficients, we can relax this upper bound to
\begin{equation} \label{boundcylinders}
(N+L-1)\cdot \left(\frac{eL}{2}\right)^{2\cdot\frac{N+L-1}{L}}\cdot 2^{\beta^* N+L-1}=k\cdot \left(\frac{eL}{2}\right)^{\frac{2k}{L}}\cdot 2^{\beta^* k}\cdot 2^{(1-\beta^*)(L-1)},
\end{equation}
where $k=N+L-1$. 
Notice that for large enough $L$ (and consequently, large enough $k$) we have
\[2^{\frac{\varepsilon}{2}}>\sqrt[k]{k}\left(\frac{eL}{2}\right)^{\frac{2}{L}},\]
as both factors on the right tend to $1$. Fix $L$ to be sufficiently large in order to guarantee this. 
Consequently, (\ref{boundcylinders}) can be estimated from above by
\begin{equation}
2^{(\beta^*+\frac{\varepsilon}{2})k}\cdot 2^{(1-\beta^*)(L-1)}.
\end{equation}
Hence 
\[\left\{\frac{1}{N}\sum_{n=0}^{N-1}f(\sigma^n\omega)\geq t^*\right\}\]
can be covered by at most $2^{(\beta^*+\frac{\varepsilon}{2})k}\cdot 2^{(1-\beta^*)(L-1)}$ many cylinders of diameter $2^{-k}$ for any $k$ with $L|k$. 
It immediately yields
\[\mathcal{H}_{2^{-k}}^{\beta^*+\frac{\varepsilon}{2}}\left(\left\{\frac{1}{N}\sum_{n=0}^{N-1}f(\sigma^n\omega)\geq t^*\right\}\right)\leq 2^{(1-\beta^*)(L-1)}\]
where $N=k-L+1$ as before. 
However, this set contains $A_m$ for large enough $k,N$, thus
\[\mathcal{H}_{2^{-k}}^{\beta^*+\frac{\varepsilon}{2}}\left(A_m \right)\leq 2^{(1-\beta^*)(L-1)}.\]
As $k,N$ can be arbitrarily large, it shows that in fact
\[\mathcal{H}^{\beta^*+\frac{\varepsilon}{2}}\left(A_m \right)\leq 2^{(1-\beta^*)(L-1)}\]
and consequently,
\[\dim_H(A_m)\leq \beta^*+\frac{\varepsilon}{2}=\beta+\varepsilon.\]
Consequently, by our initial observations
\[S_f(t)\leq \beta+\varepsilon,\]
as stated.
\end{proof}

\begin{proof}[Proof of Theorem $\ref{finitederivatives}$]
We define $f$ to be a more elaborate variant of the function appearing in Lemma \ref{spectrumbound}. 
Set $t_j=1-2^{-j}$.
Then $t_j\in (0,1)$ and $t_j\to 1$. We will define a strictly increasing sequence $(L_j)$ of positive integers, to be fixed later and chosen recursively. 
We can suppose that $L_{1}>5$.

Now we let $f(\omega)=t_j$ if $\omega$ starts with a block of $1$s of length at least $L_j$, but less than $L_{j+1}$. 
Moreover, $f(\omega)=-t_j$ if $\omega$ starts with a block of $0$s of length at least $L_j$, but less than $L_{j+1}$. 
Finally, let $f(1^{\oo})=1$ and $f(0^{\oo})=-1$ for the constant sequences, and let $f(\omega)=0$ for any remaining $\omega$. 
Due to symmetry, it is clear that $\int f=0$, and it is straightforward to check continuity. 
It remains to prove that the relevant derivatives are finite. 
By symmetry again, it suffices to verify $\partial^- S_f(\alpha_{f,\max}^*)>-\infty$. 
To this end, we will use an argument similar to the one seen in the proof of Lemma \ref{spectrumbound}. 

As in \eqref{setestimate}, we can deduce
\[E_f(t_{j+1})\subset \bigcup_{m=1}^{\infty}\bigcap_{N=m}^{\infty}\left\{\frac{1}{N}\sum_{n=0}^{N-1}f(\sigma^n\omega)\geq t_{j}\right\}.\]
This union is the union of a growing sequence of sets, thus the dimension is simply the limit of $\dim_H A_m$, where
\[A_m=\bigcap_{N=m}^{\infty}\left\{\frac{1}{N}\sum_{n=0}^{N-1}f(\sigma^n\omega)\geq t_{j}\right\}.\]
In order to estimate this dimension, we first introduce an auxiliary function, which is easier to examine. 
Explicitly, we let $f_j=0$, if $f\leq 0$, and we let $f_j=1$ if $f\geq t_{j}$. 
In any other case we let $f_j=f$. 
Then $f_j\geq{f}$, consequently
\[A_{m,j}=\bigcap_{N=m}^{\infty}\left\{\frac{1}{N}\sum_{n=0}^{N-1}f_j(\sigma^n\omega)\geq t_{j}\right\}\]
contains $A_m$. 
Thus it suffices to estimate the dimension of $A_{m,j}$. 
The argument is similar to the one in the proof of Lemma \ref{spectrumbound}. We would like to count the cylinder sets of length $N+L_j-1$ 
which  intersect $\left\{\frac{1}{N}\sum_{n=0}^{N-1}f_j(\sigma^n\omega)\geq t_j\right\}$ for large $N$, as they give a cover of $A_{m,j}$ for any $N\geq{m}$. 
In order to avoid the inconvenience caused by integer parts, we will only consider $N$s with certain divisibility properties, as before.

First of all, the number of blocks consisting of at least $L_j$ consecutive $1$s is at most $\frac{N+L_j-1}{L_j}$, which is an integer for infinitely many $N$. 
Thus the number of ways we can arrange the blocks of at least $L_j$ consecutive $1$s is at most
\begin{equation} \label{jblockbound}
\sum_{i=0}^{\frac{N+L_j-1}{L_j}}\binom{N+L_j-1}{2i}\leq \left(\frac{N+L_j-1}{L_j}+1\right)\cdot\binom{N+L_j-1}{2\cdot\frac{N+L_j-1}{L_j}}
\end{equation}
$$\leq (N+L_j-1)\cdot\binom{N+L_j-1}{2\cdot\frac{N+L_j-1}{L_j}},$$
using $L_j\geq L_{1}>{5}$, as in (\ref{blockbound}). We call these blocks $j$-blocks.

The novelty of cylinder counting in this proof compared to the previous one is that we have to take into account the blocks responsible for the values of $f_j$ between $0$ and $t_{j-1}$. As $\frac{1}{N}\sum_{n=0}^{N-1}f_j(\sigma^n\omega)\geq t_j$, in the first $N$ coordinates there are at most $\frac{1-t_j}{1-t_{j-1}}N=\frac{N}{2}$ not covered by the $j$-blocks, as otherwise the number of terms in $\sum_{n=0}^{N-1}f(\sigma^n\omega)$ with $f(\sigma^n\omega)\leq t_{j-1}$ is too large and we have
$\frac{1}{N}\sum_{n=0}^{N-1}f(\sigma^n\omega)< t_j.$
Thus beside the already placed $j$-blocks, there are at most $\frac{1-t_j}{1-t_{j-1}}N+L_j-1=\frac{N}{2}+L_j-1$ coordinates remaining, which might contain some $(j-1)$-blocks of at least $L_{j-1}$ consecutive $1$s. 
By a similar estimate to  (\ref{jblockbound}) we find that the number of possible arrangements of these $(j-1)$-blocks is at most
\begin{equation} \label{j-1blockbound}
\sum_{i=0}^{\frac{\frac{N}{2}+L_j-1}{L_{j-1}}}\binom{\frac{N}{2}+L_j-1}{2i}\leq \left(\frac{\frac{N}{2}+L_j-1}{L_{j-1}}+1\right)\cdot\binom{\frac{N}{2}+L_j-1}{2\cdot\frac{\frac{N}{2}+L_j-1}{L_{j-1}}}
\end{equation}
$$\leq \Big ({\frac{N}{2}+L_j-1}\Big )\cdot\binom{\frac{N}{2}+L_j-1}{2\cdot\frac{\frac{N}{2}+L_j-1}{L_{j-1}}},$$
using $L_{j-1}\geq L_{1}>5$.

Suppose that $j_{0}\in \{ 0,...,j-1 \}.$
Proceeding recursively, by the same argument we can conclude that the union of the $(j-i)$-blocks taken for $i=0,1,...,j_0-1$ cover all but at most $\frac{1-t_{j}}{1-t_{j_0}}N=\frac{N}{2^{j_0}}$ of the first $N$ coordinates. 
Thus beside these blocks there are at most $\frac{N}{2^{j_0}}+L_j-1$ coordinates remaining, which yields similarly to (\ref{j-1blockbound}) that the number of possible arrangements of the $(j-j_0)$-blocks is at most
\begin{equation} \label{blockboundgeneral}
\Big ({\frac{N}{2^{j_0}}+L_j-1}\Big )\cdot\binom{\frac{N}{2^{j_0}}+L_j-1}{2\cdot\frac{\frac{N}{2^{j_0}}+L_j-1}{L_{j-j_0}}}<(N+L_j-1)\cdot\binom{\frac{N}{2^{j_0}}+L_j-1}{2\cdot\frac{\frac{N}{2^{j_0}}+L_j-1}{L_{j-j_0}}}.
\end{equation}
We can use this bound for $j_0=0,1,...,j-1$. (We note that for infinitely many values of $N$ each number appearing in the above binomial coefficients is an integer.) Finally, there can be coordinates which are not contained by any such block. 
At most $(1-t_j)N$ of them in the first $N$ coordinates, and arbitrarily many of them in the last $L_j-1$ coordinates. 
Thus they can be chosen at most $2^{(1-t_j)N+L_j-1}$ different ways. 
Hence the number of cylinders which intersect $\left\{\frac{1}{N}\sum_{n=0}^{N-1}f_j(\sigma^n\omega)\geq t_j\right\}$ can be bounded by taking the product of the estimates in (\ref{blockboundgeneral}), and multiplying it by $2^{(1-t_j)N+L_{j}-1}$. 
Hence $\left\{\frac{1}{N}\sum_{n=0}^{N-1}f_j(\sigma^n\omega)\geq t_j\right\}$ can be covered by at most
\begin{equation} \label{thmboundcylinders}
(N+L_j-1)^j\cdot 2^{(1-t_j)N+L_j-1}\cdot \prod_{j_0=0}^{j-1}\binom{\frac{N}{2^{j_0}}+L_j-1}{2\cdot\frac{\frac{N}{2^{j_0}}+L_j-1}{L_{j-j_0}}}
\end{equation}
many cylinders of diameter $2^{-(N+L_j-1)}$. 
Observe that the $j_{0}=0$ case in \eqref{thmboundcylinders}
includes the estimate \eqref{jblockbound}.
By the standard estimate of binomial coefficients we can estimate it further from above by
\begin{equation} \label{thmboundcylinders2a}
(N+L_j-1)^j\cdot 2^{(1-t_j)N+L_j-1}\prod_{j_0=0}^{j-1}\left(\frac{eL_{j-j_0}}{2}\right)^{2\cdot\frac{\frac{N}{2^{j_0}}+L_j-1}{L_{j-j_0}}}.
\end{equation}
Introduce the notation $k=N+L_j-1$ again. 
By factoring out constants depending on $L_1,...,L_{j}$ into a constant denoted by $C(L_1,...,L_{j})$,
and rearranging \eqref{thmboundcylinders2a} one can obtain that it equals 
\begin{equation} \label{thmboundcylinders2}
C(L_1,...,L_{j})\cdot k^j\cdot 2^{(1-t_j)k}\prod_{j_0=0}^{j-1}\left(\frac{eL_{j-j_0}}{2}\right)^{\frac{2k}{2^{j_0}L_{j-j_0}}}.
\end{equation}
This formulation leads us to a suitable choice of $L_n$: for an arbitrary fixed $\tau>0$, define $L_n$ large enough to guarantee that
\begin{equation} \label{Lchoice}
\left(\frac{eL_{n}}{2}\right)^{\frac{2}{L_{n}}}<2^{{\tau}/{2^{2n}}}.
\end{equation}
With this choice, (\ref{thmboundcylinders2}) can be estimated by
\begin{equation} \label{thmboundcylinders3}
C(L_1,...,L_{j})\cdot k^j\cdot 2^{(1-t_j)k}\prod_{j_0=0}^{j-1}2^{{\tau k}/{2^{2j-j_0}}}\leq C(L_1,...,L_{j})\cdot k^j\cdot 2^{(1-t_j+\frac{\tau}{2^j})k}
\end{equation}
$$\leq C(L_1,...,L_{j})\cdot 2^{(1-t_j+\frac{2\tau}{2^j})k},$$
where the last inequality holds for large enough $N,k$. 
It immediately yields
\[\mathcal{H}_{2^{-k}}^{1-t_j+\frac{2\tau}{2^j}}\left(\left\{\frac{1}{N}\sum_{n=0}^{N-1}f_j(\sigma^n\omega)\geq t_j\right\}\right)\leq C(L_1,...,L_j)\]
where $N=k-L_{j}+1$ as before. 
However, this set contains $A_{m,j}$ for large enough $k,N$, thus
\[\mathcal{H}_{2^{-k}}^{1-t_j+\frac{2\tau}{2^j}}\left(A_m \right)\leq C(L_1,...,L_j).\]
As $k,N$ can be arbitrarily large, it shows that in fact
\[\mathcal{H}^{1-t_j+\frac{2\tau}{2^j}}\left(A_m \right)\leq C(L_1,...,L_j)\]
and consequently,
\[\dim_H(A_{m,j})\leq 1-t_j+\frac{2\tau}{2^j}.\]
Consequently, by our initial observations
\[S_f(t_j)\leq 1-t_j+\frac{2\tau}{2^j},\]
that is, using $t_{j}=1-2^{-j}$ we  have
\[S_f(1-2^{-j})\leq \frac{1+2\tau}{2^j}.\]
Thus if we calculate the left derivative of $S_{f}$ at $1$ by going along the sequence $t_j$, we find that it is at most $-(1+2\tau)>-\infty$, which concludes the proof.
\end{proof}

\begin{remark}\label{*remmin}
We note that as the spectrum is concave, for any function $f\in C_0(\Omega)$ such that $\alpha_{f,\min}^*=-1$ and $\alpha_{f,\max}^*=1$ we have that the graph of $S_f$ is above the triangle graph with vertices $(-1,0),(0,1),(1,0)$. 
On the other hand, it must be below the constant 1 function in the  interval $[-1,1]$. 
It is natural to ask whether these extremes can be attained/approximated. 
We do not give the complete answer for these questions, but make a few observations.

First of all, Theorem \ref{finitederivatives} easily yields that $S_f$ can be arbitrarily close to the triangle graph: notably for the function $f$ constructed in the previous proof, $S_f$ is contained by the triangle with vertices $(-1,0), (0,1+2\tau), (1,0)$ due to concavity. 
Thus the theoretic minimum can be approximated.

On the other hand, if we would like to construct some $f$ such that $S_f$ is considerably large, we can consider a function similar to the one in Example \ref{*propdisc}. More explicitly, let $f\in\PCC^{2k+1}(\Omega)$ be such that it takes the value -1 on cylinders which contain more $0$s than $1$s in their first $2k+1$ coordinates, and $f(\omega)=1$ otherwise. 
As in the proof of Example \ref{*propdisc}, we can show by Hutchinson's theorem that $S_f(-1)=S_f(1)$ is at least $\frac{k}{2k+1}$. 
Thus the piecewise linear graph determined by the vertices (-1,1/2), (0,1), (1,1/2) can be arbitrarily close to a lower estimate of the spectrum, which means that $S_f$ is considerably large, even though it is far from what we strove for.

We also provide another example, which displays that $S_f(\alpha_{f,\max}^*)$ can be arbitrarily close to 1 even for nonconstant functions, if we drop the condition that $\alpha_{f,\max}^*=1$. 
Notably, let $f\in\PCC^k(\Omega)$ such that it takes the value $-1$ if the first $k$ coordinates equal $0$, while it takes the value $\frac{1}{2^k-1}$ if these coordinates contain at least one $1$. Then similarly to the previous argument we have that $S_f\left(\frac{1}{2^k-1}\right)\geq \frac{k-1}{k}$. 
It would be interesting to see how large $S_f(\alpha_{f,\max}^*)$ can be if $f\in C_0(\Omega)$ such that $\alpha_{f,\min}^*=-1$ and $\alpha_{f,\max}^*=1$.

\end{remark}

\begin{remark}
It is natural to ask whether Theorem \ref{finitederivatives} holds if we restrict our attention to the class of H\"older functions or $\PCC$ functions. While we do not know whether there is a $\PCC$ function with finite one-sided derivatives
at the endpoints of the spectrum, the following theorem might make one believe that the answer to this question is negative:
\end{remark}

\begin{theorem}\label{pccinfiniteder}
	Assume that $f\in\PCC(\Omega)$ and $S_f$ is continuous. Then $\partial^- S_f(\alpha_{f,\max}^*)=-\infty$, while $\partial^+ S_f(\alpha_{f,\min}^*)=\infty$.
\end{theorem}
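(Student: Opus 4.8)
The plan is to produce, for each large integer $n$, a self-similar set $\mathbf{H}_n\subseteq\OOO$ contained in a single level set $E_f(\alpha_n)$, with $\alpha_n<\alpha_{f,\max}^*$, $\alpha_n\to\alpha_{f,\max}^*$, and with $\dim_H\mathbf{H}_n$ decaying only logarithmically faster than the deficit $\alpha_{f,\max}^*-\alpha_n$; concavity of $S_f$ together with $S_f(\alpha_{f,\max}^*)=0$ (which holds since $S_f$ is continuous, and which also forces $\alpha_{f,\min}^*<\alpha_{f,\max}^*$) then yields $\partial^-S_f(\alpha_{f,\max}^*)=-\infty$. By replacing $f$ with $-f$ it suffices to treat the right endpoint.

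First I would fix the combinatorial data inside the graph $G$ on $\{0,1\}^k$ from the proof of Lemma \ref{periodicmax}, with the weight $f(u)$ on the edges leaving $u$. By Lemma \ref{periodicmax} there is a periodic configuration with Birkhoff average $\alpha_{f,\max}^*$; decomposing its orbit walk into simple cycles as in that proof, pick a simple cycle $C_0$ of length $q$ with cycle-average exactly $\alpha_{f,\max}^*$ (so the associated periodic word $B$ has prime period $q$, i.e.\ $B$ is primitive), and a vertex $v_0$ of $C_0$. Since $G$ is strongly connected and every vertex has out-degree $2$, following the out-edge of $v_0$ not used by $C_0$ and then a simple path back to $v_0$ produces a closed walk $E$ through $v_0$ with $E\neq B^j$ for every $j$; write $\ell_E=|E|$ and let $\Sigma_E$ be its edge-weight sum. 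For $n\geq 2$ and $1\leq p\leq n$ set $W_{n,p}=B^{p-1}EB^{n-p+1}$, a word of length $M_n:=nq+\ell_E$ which, read as a walk, is a closed walk through $v_0$ whose edge-weight sum equals $S_n:=nq\,\alpha_{f,\max}^*+\Sigma_E$, \emph{independently of $p$} (the $C_0$-segments always contribute at rate $\alpha_{f,\max}^*$). Using primitivity of $B$ and $E\neq B^j$, a short commuting-words argument (if $W_{n,p}=W_{n,p'}$ with $p<p'$ then, after cancelling common prefixes and suffixes, $EB^{p'-p}=B^{p'-p}E$, forcing $E$ to be a power of $B$) shows $W_{n,1},\dots,W_{n,n}$ are pairwise distinct. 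Let $\mathbf{H}_n:=\{W_{n,1},\dots,W_{n,n}\}^{\infty}$, the attractor of the $n$ prepend maps $\omega\mapsto W_{n,p}\omega$, which are similarities of ratio $2^{-M_n}$ with pairwise disjoint images $[W_{n,p}]$; by Hutchinson's theorem $\dim_H\mathbf{H}_n=\tfrac{\log n}{M_n\log 2}$.

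Next I would verify $\mathbf{H}_n\subseteq E_f(\alpha_n)$ for $\alpha_n:=S_n/M_n$: every $\omega\in\mathbf{H}_n$ is a concatenation of super-blocks of common length $M_n$ and common edge-weight sum $S_n$, so $\tfrac1N\sum_{i=0}^{N-1}f(\sigma^i\omega)\to\alpha_n$ (the boundary term is $O(M_n/N)$, exactly as in Lemma \ref{uniformity} and the Claim inside the proof of Theorem \ref{*ppropdisc}). Hence $S_f(\alpha_n)\geq\tfrac{\log n}{M_n\log 2}$. A direct computation gives $\alpha_{f,\max}^*-\alpha_n=\tfrac{D_E}{M_n}$ with $D_E:=\ell_E\,\alpha_{f,\max}^*-\Sigma_E\geq 0$, and in fact $D_E>0$: if $D_E=0$ then $\alpha_n=\alpha_{f,\max}^*$ and $\mathbf{H}_n\subseteq E_f(\alpha_{f,\max}^*)$ would have positive Hausdorff dimension, contradicting $S_f(\alpha_{f,\max}^*)=0$. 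Thus $\alpha_n<\alpha_{f,\max}^*$, $\alpha_n\to\alpha_{f,\max}^*$, and
\[
\frac{S_f(\alpha_n)-S_f(\alpha_{f,\max}^*)}{\alpha_n-\alpha_{f,\max}^*}\leq\frac{\log n/(M_n\log 2)}{-D_E/M_n}=-\frac{\log n}{D_E\log 2}\longrightarrow-\infty\qquad(n\to\infty).
\]
Since $S_f$ is concave on $L_f$ its left difference quotients at $\alpha_{f,\max}^*$ are monotone in the increment and their limit $\partial^-S_f(\alpha_{f,\max}^*)$ is $\leq$ every one of them, so a subsequence tending to $-\infty$ gives $\partial^-S_f(\alpha_{f,\max}^*)=-\infty$; symmetrically $\partial^+S_f(\alpha_{f,\min}^*)=+\infty$.

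The step I expect to be the real obstacle is recognizing that the number of "choices" per super-block must grow with $n$. The two-word constructions $\{X,Y\}^{\infty}$ used elsewhere in the paper have dimension $\Theta(1/M)$ and deficit $\Theta(1/M)$, hence only a bounded slope; more generally any fixed number of equal-weight excursions gives a bounded ratio, and the deficit of a \emph{single} excursion is bounded below by a spectral-gap constant when $S_f$ is continuous. Inserting one fixed excursion $E$ at any of the $n$ junctions of $C_0^n$ is precisely what decouples entropy from deficit: it yields $\log n$ bits of freedom at scale $2^{-M_n}$ against a deficit of only $D_E/M_n$. The remaining technicalities — keeping all $n$ super-blocks of exactly equal length \emph{and} equal $f$-sum, so that $\mathbf{H}_n$ lies in one level set, and checking they are genuinely distinct words — are settled by the "$C_0$-segments contribute at rate $\alpha_{f,\max}^*$" identity and the primitivity/commuting-words observation above, respectively.
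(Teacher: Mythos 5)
Your proof is correct, and it takes a genuinely different route from the paper's. The paper also works inside the edge-weighted de Bruijn graph $G$ from Lemma \ref{periodicmax} and also exploits continuity of $S_f$ at the start (to guarantee two cycles of distinct averages), but the family of sets and the source of the unbounded slope are different. The paper fixes two cycles $C,C'$ sharing a vertex, with $C$ of maximal average and $C'$ not, forms the set $\Omega_\beta$ of orbits that alternate copies of $C$ and $C'$ with asymptotic $C$-density $\beta$, pushes $\Omega_\beta$ by a H\"older-$1/K$ map onto the set of binary sequences with digit density $\beta$, and reads off $\dim_H\Omega_\beta\geq -\bigl(\beta\log\beta+(1-\beta)\log(1-\beta)\bigr)/(K\log 2)$ from Example \ref{examplecontspect}. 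Letting $\beta\to 1$, the $\log(1-\beta)$ singularity of the binary entropy gives the infinite slope. You instead fix a single maximal cycle $C_0$ (with primitive word $B$) and one sub-optimal excursion $E$, and for each $n$ build the self-similar set $\mathbf{H}_n$ on $n$ super-blocks $W_{n,p}=B^{p-1}EB^{n-p+1}$; the $n$ possible insertion points of $E$ give $\log n$ bits of entropy at scale $2^{-M_n}$ while the deficit from $\alpha^*_{f,\max}$ is $D_E/M_n$ with $D_E>0$ fixed, so the slope $-\log n/(D_E\log 2)\to-\infty$. Both arguments are the same phenomenon (Shannon entropy of a rare event decays only logarithmically in its frequency) but packaged differently: yours is more directly combinatorial and only needs Hutchinson, whereas the paper's is shorter once Example \ref{examplecontspect} is available and avoids the primitivity/commuting-words verification you need to keep the $W_{n,p}$ distinct. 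Both uses of continuity are essential and correctly placed: the paper uses it to obtain two cycles of distinct averages, you use it to force $D_E>0$ (and $\alpha^*_{f,\min}<\alpha^*_{f,\max}$). One small wrinkle you should be aware of, which does not affect the conclusion: your $W_{n,p}$ are words of \emph{appended} symbols, so for the walk interpretation the point $\omega\in\mathbf{H}_n$ should be read as $v_0W_{n,p_1}W_{n,p_2}\cdots$ (or, equivalently, one may drop the length-$k$ prefix at the cost of an $O(k/N)$ boundary term in the Birkhoff averages); either choice leaves the Hausdorff dimension and the limit $\alpha_n$ unchanged.
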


\begin{proof}
	Choose $k$ such that $f\in\PCC^k(\Omega)$. By symmetry, it clearly suffices to prove $\partial^- S_f(\alpha_{f,\max}^*)=-\infty$. Consider the directed graph $G=(V,E)$ defined in the proof of Lemma \ref{periodicmax}, and the set $\mathcal{C}$ of its cycles. By that reasoning it is clear that there exist cycles with distinct weight averages as otherwise for any infinite path $\Gamma$ we would get the same weight average in limit, which means that the ergodic averages have the same limit for all configurations, hence $S_f$ cannot be continuous. Moreover, as $G$ is connected as a directed graph, the graph of cycles $G_\mathcal{C}$ is also connected, in which the vertices are the elements of $\mathcal{C}$, and two of them are connected if they have a common vertex. 
	This, together with our previous observation implies that we can choose cycles $C$ and $C'$ such that they have a common vertex $v$, the cycle $C$ has maximal weight average amongst the elements of $\mathcal{C}$, while $C'$ does not. 
	Now consider the set of infinite paths in $G$ denoted by $H_{\beta}$ which consists of the paths which start from $v$, and can be partitioned into finite pieces $\Gamma_1,\Gamma_2,...$ such that each $\Gamma_i$ equals either $C$ or $C'$, and the density ${\mathbf d}\left(\{i:\Gamma_i=C\}\right)=\beta$. Then it is obvious to see that the weight average along any $\Gamma\in H_{\beta}$ tends to
	\[\beta\cdot\frac{1}{|C|}\sum_{e\in C}f(e)+(1-\beta)\cdot\frac{1}{|C'|}\sum_{e\in C'}f(e)=\beta\alpha_{f,\max}^*+(1-\beta)\alpha',\]
	where $\alpha'<\alpha_{f,\max}^*$ by the choice of $C'$. Thus if we take the corresponding configuration $ \omega(\Gamma)$, and in the ergodic averages we shift the indexing again by one, we see that
	\[\frac{1}{N}\sum_{n=0}^{N-1}f(\sigma^n\omega(\GGG))\to\beta\alpha_{f,\max}^*+(1-\beta)\alpha'.\]
	That is, if $\Omega_{\beta}$ denotes the set of $ \omega(\Gamma)$s for which $\Gamma\in H_{\beta}$, we have
	\begin{equation}\label{relationspectrum}
	\Omega_{\beta}\subseteq E_f(\beta\alpha_{f,\max}^*+(1-\beta)\alpha').
	\end{equation}
	However, the dimension of $\Omega_{\beta}$ is easy to estimate from below using the following mapping: for $ \omega(\Gamma)\in\Omega_{\beta}$ define $h( \omega(\Gamma))=h_1h_2...$ by
	\[h_i := \left\{ \begin{array}{ll} 
	1 & \text{if } \Gamma_i = C \\
	0 & \text{if } \Gamma_i = C'.
	\end{array}
	\right. \]
	Now $h$ is a H\"older-mapping. Note that the starting point of $\Gamma$ determines the first $k$ coordinates of $ \omega(\Gamma)$, and then going along $C$ (resp. $C'$) determines the next $|C|$ (resp. $|C'|$) coordinates. 
	By reversing this argument, if $K=\max\{|C|,|C'|\}$, the first $k+mK$ coordinates of $ \omega(\Gamma)$ uniquely determine the cycles $\Gamma_1,...,\Gamma_m$ in the decomposition of $\Gamma$. In other words, the first $m$ coordinates of $h( \omega(\Gamma))$ are uniquely determined by the first $k+mK$ coordinates of $ \omega(\Gamma)$. 
	From this, one easily obtains that $h$ is a H\"older-${1}/{K}$ mapping. 
	
	Moreover, by the definition of $H_\beta$ and $\Omega_\beta$, it is clear that $h(\Omega_\beta)$ equals the set of configurations in which the density of $1$s equals $\beta$. Thus by Example \ref{examplecontspect}, we can deduce that
	\[\dim_H(h(\Omega_\beta))=-\frac{\beta\log(\beta)+(1-\beta)\log(1-\beta)}{\log 2}.\]
	Hence as $h$ was H\"older-${1}/{K}$:
	\[\dim_H(\Omega_\beta)\geq-\frac{\beta\log(\beta)+(1-\beta)\log(1-\beta)}{K\log 2}.\]
	Thus by (\ref{relationspectrum}):
	\[S_f(\beta\alpha_{f,\max}^*+(1-\beta)\alpha')\geq-\frac{\beta\log(\beta)+(1-\beta)\log(1-\beta)}{K\log 2}.\]
	Consequently, also using that by continuity of $S_{f}$ we have $S_f(\alpha_{f,\max}^*)=0$ we infer
	\[\frac{S_f(\alpha_{f,\max}^*)-S_f(\beta\alpha_{f,\max}^*+(1-\beta)\alpha')}{\alpha_{f,\max}^*-(\beta\alpha_{f,\max}^*+(1-\beta)\alpha')}\leq \frac{\beta\log(\beta)+(1-\beta)\log(1-\beta)}{(1-\beta)(\alpha_{f,\max}^*-\alpha')K\log 2}\]
	However, the right hand side can be estimated from above by omitting the negative first term, and after simplifying by $1-\beta$ we see that it tends to $-\infty$ as $\beta\to 1$. Hence the same holds for the left hand side, showing that $\partial^- S_f(\alpha_{f,\max}^*)=-\infty$.
\end{proof}

\section*{Acknowledgments}
This project was initiated in October 2018, during the first author's visit to Facultad de Matem\'aticas at Pontificia Universidad Cat\'olica de Chile, which was partially supported by CONICYT PIA ACT 172001. The first author would like to thank the hospitality of PUC.

The second author is thankful to M\'at\'e Fellner for the valuable discussion.

We are also grateful to Thomas Jordan for his valuable comments 
given to the third author
and pointing out some important references.

Finally, we thank the anonymous referee for making comments which improved the presentation of our results.


\begin{thebibliography}{1}
\bibitem{BS01}
L. Barreira and B. Saussol.
\newblock Variational principles and mixed multifractal spectra. 
\newblock Trans. Amer. Math. Soc. 353 (2001), no. 10, 3919--3944.


\bibitem{CaMa}
R. Cawley and R. D. Mauldin
\newblock Multifractal
 Decompositions
 of Moran
 Fractals.
\newblock {\em Adv. Math} 92 (1992), 196-236.




\bibitem{C10}
V. Climenhaga.
\newblock Multifractal formalism derived from thermodynamics for general dynamical systems.
\newblock {\em Electron. Res. Announc. Math. Sci} 17 (2010), 1--11.

\bibitem{C14}
V. Climenhaga
\newblock The thermodynamic approach to multifractal analysis.
\newblock {\em Ergodic Theory Dynam. Systems} 34 (2014), no. 5, 1409--1450.

\bibitem{Eggleston}
H. G. Eggleston.
\newblock The fractional dimension of a set defined by decimal properties.
\newblock {\em Quart. J. Math}, Oxford Ser 20:31--36, 1949.


\bibitem{Fan_Feng_Wu_Recurrence}
A.-H. Fan, D.-J. Feng, and J. Wu.
\newblock Recurrence, dimension and entropy.
\newblock {\em J. London Math. 
Soc.}, 64:229--244, 2001.

\bibitem{FLW02}
D.-J. Feng, K.-S. Lau, and J. Wu.
\newblock Ergodic limits on the conformal repellers.
\newblock {\em Adv. Math}, 169 (2002), no. 1, 58--91.

\bibitem{Hutch}
J. E. Hutchinson, 
\newblock Fractals and self-similarity. 
\newblock {\em Indiana Univ. Math. J.}, 30 (1981), no. 5, 713--747.

\bibitem{IJT17}
G. Iommi, T. Jordan, and M. Todd.
\newblock Transience and multifractal analysis.
\newblock {\em Ann. Inst. Poincar\'e Anal. Non Lin\'eaire}, 34 (2017), no. 2, 407--421.


\bibitem{J17}
O. Jenkinson
\newblock Ergodic optimization in dynamical systems.
\newblock {\em Ergodic Theory Dynam. Systems} 39 (2019), no. 10, 2593--2618. 


\bibitem{JJOP10}
A. Johansson, T. Jordan, A. \"Oberg, M. Pollicott.
\newblock Multifractal analysis of non-uniformly hyperbolic systems.
\newblock {\em Israel J. Math.}, 177 (2010), 125--144.



\bibitem{Morris}
I. D. Morris
\newblock Ergodic optimization for generic continuous
functions
\newblock {\em Discrete Contin. Dyn. Sys.} 27 (2010), no. 1, 383--388.

\bibitem{O03}
L. Olsen
\newblock Multifractal analysis of divergence points of deformed measure theoretical Birkhoff averages.
\newblock {\em J. Math. Pures Appl}, (9) 82 (2003), no. 12, 1591--1649.

\bibitem{PW01}
Y. Pesin and H. Weiss
\newblock The multifractal analysis of Birkhoff averages and large deviations.
\newblock In {\em Global Analysis of Dynamical Systems} IoP Publishing, Bristol, UK, 2001.


\bibitem{Rand}
D. A. Rand
\newblock {The singularity spectrum $f(\aaa)$ for cookie cutters}.
\newblock {\em Ergodic Theory Dynam. Systems}, 9 (1989), 527-541.



\bibitem{Rockafellar}
R. T. Rockafellar
\newblock {\em Convex Analysis}
\newblock Princeton Mathematical Series, No. 28, Princeton University Press, Princeton, N.J. 1970.


\bibitem{Schmeling}
J. Schmeling
\newblock On the completeness of multifractal spectra
\newblock {\em Ergodic Theory Dynam. Systems} 19 (1999), 1595--1616.

\bibitem{SS2000}
R. Sturman and J. Stark.
\newblock Semi-uniform ergodic theorems and applications to forced systems.
\newblock \textit{Nonlinearity} 13 (2000), no. 1, 113--143.

\bibitem{TV03}
F. Takens and E. Verbitskiy.
\newblock On the variational principle for the topological entropy of certain non-compact sets.
\newblock \textit{Ergodic Theory Dynam. Systems} 23 (2003), no. 1, 317--348.


\end{thebibliography}

\end{document}